\documentclass[12pt, reqno]{amsart}

\usepackage{amsmath, amsthm, amscd, amsfonts, amssymb, graphicx, color, mathrsfs}

\usepackage{eucal}
{\setlength\arraycolsep{2pt}}
\usepackage{cite}

\usepackage[all]{xy}
\usepackage{slashed}

\usepackage[utf8]{inputenc}
\usepackage[colorlinks,urlcolor=blue,citecolor=blue,linkcolor=blue]{hyperref}
\usepackage{cleveref}

\newtheorem{theorem}{Theorem}
\newtheorem{lemma}{Lemma}

\newtheorem{remark}{Remark}
\newtheorem{proposition}{Proposition}
\newtheorem{definition}{Definition}

\newtheorem{corollary}{Corollary}

\textheight 22.5truecm \textwidth 14.5truecm
\setlength{\oddsidemargin}{0.35in}\setlength{\evensidemargin}{0.35in}

\setlength{\topmargin}{-.5cm}

\begin{document}
\setcounter{page}{1}

\title[Higher order coercive inequalities ]{Higher order coercive inequalities}
	
	\author[Y. Wang]{Yifu Wang}
	\address{
		Yifu Wang:
		Department of Mathematics
		Imperial College London
		180 Queen's Gate, London SW7 2AZ
         United Kingdom
		{\it E-mail address} {\rm yifu.wang17@imperial.ac.uk}
		}

		\author[B. Zegarlinski]{Boguslaw Zegarlinski}
	\address{
		Boguslaw Zegarlinski:
		Department of Mathematics
		Imperial College London
		180 Queen's Gate, London SW7 2AZ
         United Kingdom
		{\it E-mail address} {\rm b.zegarlinski@imperial.ac.uk}
		}
		
	 \thanks{The first author was supported by the Imperial College Scholarship}

	\begin{abstract} 
		We study the higher order $q$- Poincar{\'e} and other coercive inequalities for a class probability measures satisfying Adam's regularity condition.
	\end{abstract} 

		\maketitle

\keywords{ 
\footnotesize{\textsc{Keywords}. Higher order Poincar{\'e}, Tight Orlicz-Sobolev inequalities, Regular Probability Measures, Higher order minimizers, Statistical polynomials, Equivalence of norms, Decay to Equilibrium.} 
}


	\tableofcontents
	
\label{Section 1}
 \section{ Introduction} \label{Intro}
Studies of coercive inequalities for probability measures, in particular including Poincar{\'e} and Log-Sobolev type inequalities and their applications, has a long and interesting history see e.g. \cite{ABC}, \cite{BGL}, \cite{GZ}, \cite{DGS},
..., \cite{Bo} and references therein. Despite enormous progress in this area there is still a lot of widely open and challenging problems in particular in relation to analysis on Lie groups and infinite dimensional analysis, for some starting pointers in this direction the reader could consult e.g. \cite{HZ}, \cite{Mar}, \cite{BB},... and others. In this paper we study in a systematic way the higher order inequalities in functional spaces associated to a probability measure. In case of Lebesgue measure an extensive literature exists see e.g. \cite{Zi} and references therein. For probability measures first work done in the direction
of higher order bounds is \cite{R} with more extensive development achieved in \cite{A}. In particular in the latter work \cite{A}, the author considered a class of probability measures for which the logarithm of density of the probability measure was given as a function of regularised distance in $\mathbb{R}^n$ and satisfied some pointwise regularity conditions. The coercive inequalities  obtained in both papers were defective in the sense that besides the terms containing higher order derivatives it contained also terms dependent on $L_p$ norms. Expanding on techniques developed in \cite{HZ}, in the present paper we
prove tight coercive inequalities.\\
To this end we study first higher order Poincar{\'e} inequalities for a class of probability measures containing those considered in \cite{A}. Although getting optimal constants is currently challenging for higher order inequalities, we provide a constructive method to get some estimates of them. \\
Later we use the higher order Poincar{\'e} inequalities to obtain tight coercive inequalities in the form of bounds from below on $L_p$ norms of higher order derivatives of a function by suitable Orlicz norms of the function with  removed part which is nullified by the higher order derivatives. Unlike in case of Log-Sobolev inequality we do not have direct relation with tight coercive inequalities involving suitable functionals (as e.g. in \cite{BoG}, \cite{BR}, \cite{BG} \cite{Ch}, \cite{BCR}, \cite{BoZ1}, \cite{BoZ2}, \cite{RZ}). 
Thus in our context, in order to get a generalisation of Holley-Stroock perturbation lemma \cite{HS} it is more natural to consider norm dependent minimizer. We show that in some sense they are equivalent to the classical ones, but they are of other potential interest too if one would like to study concentration phenomena in various functional spaces. \\
Later, by developing further our techniques, we obtain quantitative estimates on the constants in the Adam's bounds of \cite{A}. This allows us to show equivalence of $W_{k,p}(\mu)$ norms and norms related to the Markov generator associated to the natural Dirichlet form with the given probability measure.
We remark that an extensive literature exists in relation to first order bounds with use of the Riesz transform see e.g. \cite{AT}, \cite{X}, \cite{Bo} and references therein. To get estimates of higher order some more effort was necessary.\\
Finally at the end of the paper we discuss decay to equilibrium properties on various spaces.
The motivation for this study for us comes from the works \cite{KOZ}, \cite{INZ}, \cite{Z} and \cite{YSW}, where various applications of higher gradient estimates were exploited in the infinite dimensional setting. One may hope that in the future further development of methods based on higher order estimates may help in better understanding difficult problems of decay to equilibrium for Markov semigroups in infinite dimensional context. 

 \label{Section 2}
\section{  Adams Inequalities} 
\label{  Sec.Adams Inequalities} 
Let $\lambda$ denotes the Lebesgue measure on $\mathbb{R}^n$. With a twice differentiable real function  $U$ satisfying
\[ 0< \int e^{-U}d\lambda< \infty  \] 
we define 
\[d\mu \equiv e^{-U}d\lambda \]

We say that $U$ is locally bounded if it is bounded on any ball $B(0,r)$ with radius $r\in(0,\infty)$ centered at the origin.\\  
In \cite{A} the following condition was explored.\\

\textbf{Adams Regularity Conditions}: $\exists \varepsilon, C\in(0,\infty)$

\[ \sum_{|\alpha|=2}|\nabla^\alpha U|\leq C (1+|\nabla U|)^{2-\varepsilon}. \tag{ARC}
   \]\\
   
   For $\mathbf{\alpha}\equiv(\alpha_j\in\mathbb{Z}^+)_{j=1,..,n}$ we set
\[|\mathbf{\alpha}|\equiv\sum_{j=1,..,n}\alpha_j
\]
and denote $\nabla^\alpha\equiv \prod_{j=1,..,n}\partial_j^{\alpha_j}$.
For $m\in\mathbb{N}$ and $p\geq 1$, define
\[
\|f\|_{m,p}^p\equiv \|f\|_{m,p,\mu}^p\equiv \sum_{\alpha:|\alpha|\leq m} \mu\left(|\nabla^\alpha f|^p\right) 
\]
and let $W_{m,p}(\mu)$ denote the closure of compactly supported smooth functions 
$\mathcal{C}^\infty_0$ with respect to this norm.\\
Let $A$ be a nonnegative nondecreasing function on $\mathbb{R}^+$ satisfying $\Delta_2$ condition, i.e. with some $K_0\in\mathbb{R}^+$ $\forall t\in\mathbb{R}^+$
 \[ A(2t) \leq  K_0 A(t).
 \]
and define
\[ \Phi_{A,p}(t)\equiv |t|^p A(\log^\ast |t|),\]
with $\log^\ast t\equiv \max\{1,\log (t)\}$.
   
\begin{theorem} \label{AdamsThm.1}([Adams'JFA1979, Theorem 1]) \\
 If $U$  satisfies the (ARC) regularity condition   and if, for some $a\in(0,\infty)$, the following inequality holds
\[ A(U(x))\leq a(1+|\nabla U|)^{mp}, \qquad a.e.,
\]
then there exists a constant $K\in(0,\infty)$ such that for all $f\in W_{m,p}(\mu)$,
\[
\mu\Phi_{A,p}(f) \leq K \left( \|f\|_{m,p}^p + \Phi_{A,p}(\|f\|_p) \right).
\]

\end{theorem}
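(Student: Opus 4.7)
By density of $\mathcal{C}^\infty_0$ in $W_{m,p}(\mu)$ it suffices to prove the inequality for smooth compactly supported $f$. The plan is to split
\[
\mu\Phi_{A,p}(f)=\int|f|^pA(\log^*|f|)e^{-U}d\lambda=I_1+I_2,
\]
where $I_1$ is the integral over $\{\log^*|f|\le U\}$ and $I_2$ over $\{\log^*|f|>U\}$. On $I_1$ monotonicity of $A$ together with the hypothesis dominates $A(\log^*|f|)$ by the polynomial $a(1+|\nabla U|)^{mp}$; on $I_2$ the largeness of $|f|$ relative to $e^U$ forces the contribution to be controlled by $\Phi_{A,p}(\|f\|_p)$.

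For $I_1$, the hypothesis and monotonicity give
\[
I_1\le a\int|f|^p(1+|\nabla U|)^{mp}e^{-U}d\lambda.
\]
I would then iterate integration by parts via the identity
\[
|\nabla U|^ke^{-U}=-|\nabla U|^{k-2}\nabla U\cdot\nabla e^{-U},
\]
which after transfer of the derivative yields, for any test function $g\in\mathcal{C}^\infty_0$,
\[
\int g|\nabla U|^ke^{-U}d\lambda=\int\nabla g\cdot|\nabla U|^{k-2}\nabla U\,e^{-U}d\lambda+\int g\,\nabla\!\cdot\!(|\nabla U|^{k-2}\nabla U)\,e^{-U}d\lambda.
\]
The (ARC) condition bounds the divergence term by $Ck(1+|\nabla U|)^{k-\varepsilon}$, strictly lowering the $|\nabla U|$ power by $\varepsilon$ at each step, while the first term transfers a derivative onto $g=|f|^p$, producing $p|f|^{p-1}\nabla f$ factors. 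Iterating this $m$ times and expanding $(1+|\nabla U|)^{mp}$ binomially, one reduces the estimate to a sum of terms of the form
\[
\int|f|^{p-j}|\nabla^{\beta_1}f|\cdots|\nabla^{\beta_j}f|(1+|\nabla U|)^{r}e^{-U}d\lambda
\]
with residual $r\le 0$ (the $|\nabla U|$ prefactor thereby bounded) and $|\beta_1|+\cdots+|\beta_j|\le m$. H\"older's inequality with conjugate exponents matched so that the top-order factor is paired with $L^p(\mu)$ then yields $I_1\le C\|f\|_{m,p}^p+C'\Phi_{A,p}(\|f\|_p)$, the latter arising from the lowest-order H\"older pairings $j=0$.

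For $I_2$, on $\{\log^*|f|>U\}$ either $U<1$, in which case restriction to this set gives $\int|f|^pA(\log^*|f|)d\mu\le A(\log^*\|f\|_\infty)\|f\|_p^p$ which is reabsorbed into $C\Phi_{A,p}(\|f\|_p)$ via a further splitting $|f|\lessgtr\|f\|_p$ and the $\Delta_2$ property of $A$, or else $|f|>e^U\ge e$. In the latter regime Chebyshev $\mu\{|f|>t\}\le\|f\|_p^p/t^p$, a layer-cake representation with $H(t)=t^pA(\log^*t)$, and the $\Delta_2$ doubling of $A$ (to compare $A(\log^*t)$ with $A(\log^*\|f\|_p)$ at the geometric scales $t=2^k\|f\|_p$) combine to bound the tail by $C\Phi_{A,p}(\|f\|_p)$.

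\textbf{Main obstacle.} The principal difficulty lies in the iterated integration by parts for $I_1$: one must coordinate the $m$ transfers of derivatives onto $f^p$ with the $O(mp/\varepsilon)$ (ARC)-driven reductions of residual $|\nabla U|$ powers, while keeping the H\"older exponents consistent with $\|f\|_{m,p}^p$ and crucially avoiding ever producing higher derivatives of $U$ beyond order two (for which (ARC) gives no pointwise control), so all excess derivatives must be steered onto $f$. The bookkeeping of which divergence steps produce which $(1+|\nabla U|)^{-\varepsilon}$ gain, and uniformity in the regime $|\nabla U|\to 0$ where the constant term in $(1+|\nabla U|)$ dominates, are the two delicate points.
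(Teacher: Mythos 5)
First, note that the paper itself does not prove Theorem~\ref{AdamsThm.1}: it is cited as Adams' Theorem~1 from \cite{A}. The paper only provides a proof of Lemma~\ref{AdamsLem.2} (Adams' Lemma~B), which is the key technical ingredient. So there is no in-paper proof to compare against, and your argument must be judged on its own merits.

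Your treatment of $I_1$ is essentially a re-derivation of Lemma~\ref{AdamsLem.2}. Once you observe that on $\{\log^*|f|\le U\}$ monotonicity and the hypothesis give $A(\log^*|f|)\le A(U)\le a(1+|\nabla U|)^{mp}$, the bound $I_1\le a\int|f|^p(1+|\nabla U|)^{mp}d\mu\le a\tilde K\|f\|_{m,p}^p$ follows immediately from Lemma~\ref{AdamsLem.2} with no further work. Your iterated integration-by-parts scheme is more awkward than the paper's (where $f\mapsto|f|^p(1+|\nabla U|)^{mp-1}$ is substituted into a one-step inequality and iterated $m$ times), and your step-count bookkeeping is off: the divergence branch only gains $(1+|\nabla U|)^{-\varepsilon}$ per step via (ARC), so descending from power $mp$ would require $O(mp/\varepsilon)$ iterations, not $m$. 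But this is a stylistic issue, not a gap, since Lemma~\ref{AdamsLem.2} is available.

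The genuine gap is in $I_2$. The proposed Chebyshev/layer-cake argument does not close. With $\mu\{|f|>t\}\le\|f\|_p^p/t^p$ and $H(t)=t^pA(\log^*t)$, the tail integral gives
\[
\int_{\|f\|_p}^{\infty} H'(t)\,\frac{\|f\|_p^p}{t^p}\,dt\ \gtrsim\ \|f\|_p^p\int_{\|f\|_p}^{\infty}\frac{A(\log^*t)}{t}\,dt\ =\ \|f\|_p^p\int_{\log^*\|f\|_p}^{\infty}A(s)\,ds,
\]
which diverges because $A$ is nonnegative and nondecreasing. Equivalently, in your dyadic shells $t\approx 2^k\|f\|_p$, the $k$-th contribution is comparable to $\Phi_{A,p}(\|f\|_p)\cdot A((k+1)\log 2+\log^*\|f\|_p)/A(\log^*\|f\|_p)$, and the $\Delta_2$ condition only gives $A(ct)\le K_0^{O(\log c)}A(t)$ with $K_0\ge 1$, so the shell contributions do not decay and the series diverges. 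Chebyshev from $\|f\|_p$ alone simply cannot pay for the extra $A(\log^*|f|)$ factor; one must re-invoke the weighted estimate of Lemma~\ref{AdamsLem.2} (i.e.\ use $\|f\|_{m,p}$) on the bad set as well. Similarly, the sub-case $U<1$ bound $A(\log^*\|f\|_\infty)\|f\|_p^p$ involves $\|f\|_\infty$, which is not controlled by the right-hand side of the theorem, and the ``further splitting $|f|\lessgtr\|f\|_p$'' you invoke to absorb it runs into the same divergence on the $|f|>\|f\|_p$ piece. The actual argument in \cite{A} separates $A(\log^*|f|)$ from $A(\log^*\|f\|_p)$ via the $\Delta_2$-subadditivity $A(s+t)\le K_0(A(s)+A(t))$ and uses Lemma~B again on the normalized function, rather than a distributional estimate; your $I_2$ step as written does not reach the conclusion.
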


\begin{lemma} \label{AdamsLem.2} ([Adams'JFA1979, Lemma B]) \\
There exists a constant $\tilde K$ such that for every $f\in W_{m,p}(\mu)$,
\[ \int |f|^p \left(1+|\nabla U|\right)^{mp} d\mu \leq
\tilde{K} \|f\|_{m,p}^p
\]
in particular 
\[
\int |f|^p |\nabla U| d\mu \leq
\tilde{K} \|f\|_{1,p}^p
\]
\end{lemma}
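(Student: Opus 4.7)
The plan is to prove the general inequality by induction on $m\in\mathbb{N}$, with the base case $m=0$ being trivial since $\int |f|^p(1+|\nabla U|)^0 d\mu = \|f\|_p^p = \|f\|_{0,p,\mu}^p$. For the inductive step I would use a weighted integration by parts built around the vector field
\[
V \equiv |\nabla U|^{mp-2}\nabla U,
\]
chosen so that $V\cdot\nabla U = |\nabla U|^{mp}$. Since $\nabla e^{-U} = -e^{-U}\nabla U$, for $f\in\mathcal{C}_0^\infty$ one has
\[
\int |f|^p|\nabla U|^{mp}\,d\mu \;=\; \int(\nabla\cdot V)|f|^p\,d\mu \;-\; \int V\cdot\nabla(|f|^p)\,d\mu,
\]
which by density in $\|\cdot\|_{m,p}$ combined with Fatou on the left will transfer the bound to all of $W_{m,p}(\mu)$.

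The divergence term is controlled by (ARC): a direct computation gives $\nabla\cdot V = |\nabla U|^{mp-2}\Delta U + (mp-2)|\nabla U|^{mp-4}\langle\nabla U,\mathrm{Hess}(U)\nabla U\rangle$, so $|\nabla\cdot V|\leq C(1+|\nabla U|)^{mp-\varepsilon}$ thanks to the $\varepsilon$-gain in (ARC). The elementary inequality $a^{mp-\varepsilon}\leq \eta a^{mp} + C_{\eta,\varepsilon}$ lets me absorb a small fraction of the top-order piece back into the left-hand side, at the cost of $\|f\|_p^p$. For the gradient term, writing $\nabla(|f|^p)=p|f|^{p-2}f\,\nabla f$ and applying Young's inequality with exponents $(p/(p-1),p)$ to the factorisation
\[
p|f|^{p-1}|\nabla f||\nabla U|^{mp-1} \;=\; \bigl(|f|^{p-1}|\nabla U|^{m(p-1)}\bigr)\cdot\bigl(p|\nabla f||\nabla U|^{m-1}\bigr)
\]
produces another absorbable piece $\eta\int|f|^p|\nabla U|^{mp}d\mu$ plus a remainder $C_\eta\int|\nabla f|^p|\nabla U|^{(m-1)p}d\mu$. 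Choosing $\eta$ small and rearranging, one arrives at
\[
\int |f|^p(1+|\nabla U|)^{mp}\,d\mu \;\leq\; C\int|\nabla f|^p(1+|\nabla U|)^{(m-1)p}\,d\mu \;+\; C\|f\|_p^p,
\]
and the inductive hypothesis applied component-wise to each $\partial_i f$ controls the right-hand side by $\|\nabla f\|_{m-1,p}^p\leq \|f\|_{m,p}^p$, completing the induction. The ``in particular'' statement is then immediate from $|\nabla U|\leq (1+|\nabla U|)^p$ for $p\geq 1$.

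The main technical obstacle is the integration by parts itself when $mp<2$, which only occurs in the range $m=1$, $p\in[1,2)$: there $V=|\nabla U|^{p-2}\nabla U$ is singular on the zero set of $\nabla U$. The standard remedy is to replace $|\nabla U|$ by the smoothed weight $(|\nabla U|^2+\delta)^{1/2}$ in the definition of $V$, carry out the integration by parts for each $\delta>0$ (noting that the extra Hessian contribution has a controllable sign/size), and pass $\delta\downarrow 0$ by dominated convergence — after first truncating $f$ to a bounded function of compact support so that every integral in sight is a priori finite, and then removing the truncation via monotone convergence and the $\mathcal{C}_0^\infty$-density definition of $W_{m,p}(\mu)$.
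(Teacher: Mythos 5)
Your argument is correct and arrives at the same inductive reduction as the paper, namely the estimate
\[
\int|f|^p(1+|\nabla U|)^{mp}\,d\mu\;\leq\;C\int|\nabla f|^p(1+|\nabla U|)^{(m-1)p}\,d\mu\;+\;D\,\|f\|_p^p,
\]
but by a genuinely different route. The paper integrates by parts once against a fixed subunit vector field $\nabla d$ (gradient of a regularised distance), which, under the auxiliary pointwise hypothesis $\nabla d\cdot\nabla U-\Delta d\geq|\nabla U|-D$, yields the base inequality $\int f(1+|\nabla U|)\,d\mu\leq\int|\nabla f|\,d\mu$; the weight is introduced only afterwards, by substituting $f\mapsto|f|^p(1+|\nabla U|)^{mp-1}$ into that one inequality. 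You instead build the weight into the vector field from the outset, $V=|\nabla U|^{mp-2}\nabla U$, and integrate by parts directly against $V$. Both routes then use (ARC) to control the Hessian/divergence contribution by $\eta(1+|\nabla U|)^{mp}+C_\eta$ and Young's inequality to absorb the cross term, and both close the induction by applying the reduction component-wise to $\partial_i f$. Your route dispenses with the auxiliary function $d$ and its extra pointwise assumption, which is cleaner; what it costs is the singularity of $V$ on $\{\nabla U=0\}$ when $mp<2$, which you correctly flag and propose to remove by smoothing to $(|\nabla U|^2+\delta)^{1/2}$. The paper's choice of weight $(1+|\nabla U|)^{mp-1}$ is regular for all $m,p\geq1$ because the $+1$ in the base keeps the weight bounded away from zero, so that route sidesteps the singularity issue entirely. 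Two small notes: your displayed integration-by-parts identity has a sign error — it should read $\int|f|^p|\nabla U|^{mp}\,d\mu=\int(\nabla\cdot V)|f|^p\,d\mu+\int V\cdot\nabla(|f|^p)\,d\mu$ — which is harmless since both terms are subsequently bounded in absolute value; and since your integration by parts produces the weight $|\nabla U|^{mp}$ rather than $(1+|\nabla U|)^{mp}$, one should also record $(1+|\nabla U|)^{mp}\leq 2^{mp-1}\bigl(1+|\nabla U|^{mp}\bigr)$ so that the extra constant contribution is absorbed into the $\|f\|_p^p$ term before the $\eta$-absorption is performed.
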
 
\begin{proof}
In order to get some possibly useful information about the constants we provide here selfcontained arguments following \cite{HZ}. 
First of all from Leibnitz rule for the gradient we have
\[
(\nabla f)e^{-U} = \nabla(fe^{-U}) +(\nabla U)fe^{-U}
\]
Multiplying both sides by the subunit vector $\nabla d$, with $d$ being a distance function which is regularised on small distances, integrating with the Lebesgue measure and performing integration by parts on the right hand side, one obtains
\[
\int \nabla d\cdot \nabla f d\mu = \int f\left(\nabla d\cdot \nabla U - \Delta d \right) d\mu  
\]
Assuming 
\[\nabla d\cdot \nabla U - \Delta d \geq |\nabla U| - D\]
for some $D \in(0,\infty)$ and using our assumption $|\nabla d|\leq 1$, we get 
\begin{equation} \label{Alem.e1}
  \int f\left(1+| \nabla U |\right) d\mu  \leq \int |\nabla f| d\mu
\end{equation}
Next replacing $f$ by $|f|^p \left(1+| \nabla U |\right)^{mp-1}$, we have
\begin{equation} \label{Alem.e1'}
  \int |f|^p \left(1+| \nabla U |\right)^{mp}d\mu  \leq \int |\nabla \left( |f|^p \left(1+| \nabla U |\right)^{mp-1}|\right) d\mu
\end{equation}
We discuss the right hand side as follows
\begin{equation} \label{Alem.e2}
\begin{split}
\int |\nabla \left(|f|^p \left(1+| \nabla U |\right)^{mp-1}\right)  d\mu
  \leq  p \int |\nabla  f|\; |f|^{p-1} \left(1+| \nabla U |\right)^{mp-1}| d\mu \\
  + (mp-1) \int |f|^p \left(1+| \nabla U |\right)^{mp-2} |\nabla^2 U|  d\mu
\end{split}
\end{equation} 
Applying Youngs inequality  to the first term on the right hand side, with dual indices $\frac1p+\frac1q=1$, we have
\[ \begin{split}
 p \int |\nabla  f|\; |f|^{p-1} \left(1+| \nabla U |\right)^{mp-1}  d\mu 
 \qquad \qquad \qquad \qquad \qquad\qquad \qquad \qquad\qquad \qquad
\\
 \leq \epsilon^{-p/q} q^{-p/q}\frac1p\int |\nabla  f|^p \left(1+| \nabla U |\right)^{(m-1)p} d\mu  
+ 
\varepsilon  \int   |f|^p \left(1+| \nabla U |\right)^{mp} d\mu
\end{split} 
\]
For the second term on the right hand side of (\ref{Alem.e2}), if we assume that 
\begin{equation} \label{Alem.e3}
  |\nabla^2 U|   \leq C \frac{\left(1+| \nabla U |\right)^2}{\eta(| \nabla U |)}
\end{equation}
with some function $1\leq \eta(x)\to_{x\to\infty} \infty$, then 
for any $\varepsilon\in(0,1)$ there is a constant $C_{\frac{\varepsilon}{(mp-1)}}\in(0,\infty)$ such that
\begin{equation} \label{Alem.e3'}
  |\nabla^2 U|   \leq \frac{\varepsilon}{(mp-1)}  \left(1+| \nabla U |\right)^2 + C_{\frac{\varepsilon}{(mp-1)}}
\end{equation}
and hence
\begin{equation} \label{Alem.e4}
 (mp-1) \int |f|^p \left(1+| \nabla U |\right)^{mp-2} |\nabla^2 U|  d\mu
  \leq   \varepsilon  \int |f|^p \left(1+| \nabla U |\right)^{mp}  d\mu
  + C_{\frac{\varepsilon}{(mp-1)}} \int |f|^p  d\mu 
\end{equation}
Combining (\ref{Alem.e1'})-(\ref{Alem.e4}), we arrive at
\begin{equation} \label{Alem.e5}
\begin{split}
  \int |f|^p \left(1+| \nabla U |\right)^{mp}d\mu  \leq  \\
  \epsilon^{-p/q} q^{-p/q}\frac1p\int |\nabla  f|^p \left(1+| \nabla U |\right)^{(m-1)p} d\mu  
+ 
\varepsilon  \int   |f|^p \left(1+| \nabla U |\right)^{mp} d\mu \\
+ \varepsilon  \int |f|^p \left(1+| \nabla U |\right)^{mp}  d\mu
  + (mp-1)C_{\frac{\varepsilon}{(mp-1)}} \int |f|^p  d\mu \\
  \end{split}
\end{equation}
Choosing $\varepsilon\in(0,\frac12)$ we arrive at the following bound
\begin{equation} \label{Alem.e6}
  \int |f|^p \left(1+| \nabla U |\right)^{mp}d\mu  \leq   
  \tilde C\int |\nabla  f|^p \left(1+| \nabla U |\right)^{(m-1)p} d\mu  
+ 
\tilde D \int |f|^p  d\mu  
\end{equation}
with some constant $\tilde C\equiv \tilde C(\varepsilon,p)\in (0,\infty)$ and $\tilde D\equiv \tilde D(\varepsilon,m,p)\in (0,\infty)$. From here we proceed  by induction
to conclude with the following bound
\begin{equation} \label{Alem.e7}
  \int |f|^p \left(1+| \nabla U |\right)^{mp}d\mu  \leq   
  C\sum_{k=1,..,m}\int |\nabla^k  f|^p   d\mu  
+ 
D \int |f|^p  d\mu  
\end{equation}

\end{proof} 

In an earlier paper \cite{R}, besides other interesting results, the following case was considered. 
Suppose $r > 0$ be such that
\[ |U|^r\leq a\left( \frac14 |\nabla U|^2 - \frac12 \Delta U + b \right) \]
with some $a,b\in(0,\infty)$. Then for $m=1$ and $p=2$ the inequality holds with  
$A(s)=|s|^r$.\\

In both cases, \cite{A} and \cite{R}, the results did not generalise to the infinite dimensional setting.

As an illustration of the results, in \cite{A} a major class of examples was provided in the form of $U=U(d)$ where $d(x)\equiv d(x,0)$ was a smooth function which coincided with the Euclidean distance function outside a ball centered at the origin. 

Since the function $A(t)$ satisfies doubling condition, one can show the following  bounded perturbation lemma, which includes non-regular $U$'s, without much of a problem.

\label{Thm2} %
\begin{theorem} \label{BddPertAdamsThm.1}   
 Suppose there exists a constant $K\in(0,\infty)$ such that for all $f\in W_{m,p}(\mu)$,
\[
\mu\left(\Phi_{A,p}(f) \right)\leq K \left( \|f\|_{m,p,\mu}^p + \Phi_{A,p}(\|f\|_{p,\mu}) \right).
\]
Suppose 
\[ d\tilde\mu \equiv e^{-V}d\mu\]
with a bounded measurable function $V$. Then there is a constant $\tilde{K}\in(0,\infty)$ such that for all $f\in W_{m,p}(\tilde\mu)$,
\[
\tilde\mu\left(\Phi_{A,p}(f)\right) \leq \tilde{K} \left( \|f\|_{m,p, \tilde\mu}^p + \Phi_{A,p}(\|f\|_{p,\tilde\mu}) \right).
\]
where on the right hand side the norms are with the measure $\tilde{\mu}$. 
\end{theorem}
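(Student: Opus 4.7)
The idea is to combine the pointwise two-sided comparison of $\mu$ and $\tilde\mu$ (coming from boundedness of $V$) with a doubling property for $\Phi_{A,p}$ inherited from the $\Delta_2$-condition on $A$.

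Setting $M=\|V\|_\infty<\infty$, the relation $e^{-M}\,d\mu\le d\tilde\mu\le e^M\,d\mu$ gives $e^{-M}\mu(g)\le\tilde\mu(g)\le e^M\mu(g)$ for every nonnegative measurable $g$; applying this termwise to $|\nabla^\alpha f|^p$ shows $\|\cdot\|_{m,p,\mu}$ and $\|\cdot\|_{m,p,\tilde\mu}$ are equivalent, hence $W_{m,p}(\mu)=W_{m,p}(\tilde\mu)$ and the hypothesis applies to any $f\in W_{m,p}(\tilde\mu)$.

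The key preliminary is a doubling bound for $\Phi_{A,p}$. I would first check that $\log^*(2t)\le 2\log^* t$ for all $t>0$: if $t\le e$, then $\log^* t=1$ and $\log^*(2t)\le\log(2e)=1+\log 2\le 2$; if $t>e$, then $\log^*(2t)=\log 2+\log^* t\le 2\log^* t$ since $\log^* t\ge 1>\log 2$. Monotonicity of $A$ and the $\Delta_2$-hypothesis then yield $A(\log^*(2t))\le K_0 A(\log^* t)$, whence $\Phi_{A,p}(2t)\le 2^p K_0\,\Phi_{A,p}(t)$. Iterating $k$ times and using monotonicity of $\Phi_{A,p}$, for $c\in[1,2^k]$ and $s>0$ one obtains $\Phi_{A,p}(cs)\le c^p K_0^{k}\,\Phi_{A,p}(s)$.

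With these in hand, applying the assumed inequality to $f$ (viewed as an element of $W_{m,p}(\mu)$) gives
\begin{equation*}
\tilde\mu(\Phi_{A,p}(f))\le e^{M}\mu(\Phi_{A,p}(f))\le e^{M} K\bigl(\|f\|_{m,p,\mu}^p+\Phi_{A,p}(\|f\|_{p,\mu})\bigr),
\end{equation*}
and substituting $\|f\|_{m,p,\mu}^p\le e^{M}\|f\|_{m,p,\tilde\mu}^p$ together with $\Phi_{A,p}(\|f\|_{p,\mu})\le\Phi_{A,p}(e^{M/p}\|f\|_{p,\tilde\mu})\le C_M\,\Phi_{A,p}(\|f\|_{p,\tilde\mu})$, where $C_M=e^{M}K_0^{\lceil M/(p\log 2)\rceil}$, yields the claim with $\tilde K=e^{M}K\max(e^{M},C_M)$.

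The only non-routine point is the second term on the right-hand side: the $L^p$-norm of $f$ sits inside the nonlinear functional $\Phi_{A,p}$, and the bounded perturbation shifts it by a multiplicative constant $e^{M/p}$. The $\Delta_2$-hypothesis on $A$ is precisely what allows one to reabsorb this factor into the constant; mere monotonicity of $A$ would not suffice.
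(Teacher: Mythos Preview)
Your argument is correct and is precisely the route the paper intends: the paper does not give a detailed proof, only the remark that ``since the function $A(t)$ satisfies doubling condition, one can show the following bounded perturbation lemma \ldots without much of a problem''. Your proposal spells this out faithfully, using the two-sided density comparison together with the doubling of $\Phi_{A,p}$ derived from the $\Delta_2$-property of $A$ via the elementary bound $\log^\ast(2t)\le 2\log^\ast t$.
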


We remark that one has the following equivalence of norms.

\label{Thm3} %
\begin{theorem} \label{Thm_W_kpNorms}
Under the conditions of Lemma \ref{AdamsLem.2}, suppose $U$ is locally bounded and $|\nabla U(x)|\to_{|x|\to\infty} \infty$, then there exists a constant $C\in(0,\infty)$ such that
\[
\|f\|_{m,p}^p \leq C\; \mu \left(|\nabla^m f|^p + f^p\right)
\]
for any $f\in W_{m,p}(\mu)$.
Thus the norms $\|f\|_{m,p}$ and
\begin{equation}
\| f \|_{k,p}^{\sim} \equiv \| f \|_p + \| \nabla^k f\|_p
\end{equation}
are equivalent.
\end{theorem}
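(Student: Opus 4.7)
The lower bound $\|f\|_p^p + \|\nabla^m f\|_p^p \leq \|f\|_{m,p}^p$ is immediate from the definition of $\|\cdot\|_{m,p}$, so my plan is to bound, for each $1\leq k\leq m-1$,
\[
\|\nabla^k f\|_p^p \leq C_k\bigl(\|\nabla^m f\|_p^p + \|f\|_p^p\bigr).
\]
To do this I will split integrals at a large sphere, use Lemma \ref{AdamsLem.2} on the exterior (where $|\nabla U|$ is large), apply the classical Gagliardo--Nirenberg interpolation on $\mathbb{R}^n$ via a cutoff on the interior (where $\mu$ is comparable to Lebesgue measure), and then absorb.

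Fix $R>0$ and set $M_R\equiv\inf_{|x|\geq R}(1+|\nabla U(x)|)$, which tends to $\infty$ by hypothesis. I write $\|\nabla^k f\|_p^p = \int_{B_R}|\nabla^k f|^p d\mu + \int_{B_R^c}|\nabla^k f|^p d\mu$. The exterior part I control by Lemma \ref{AdamsLem.2} applied with $m=1$ to the function $\nabla^k f\in W_{1,p}(\mu)$ (which lies there because $f\in W_{m,p}(\mu)$ and $k\leq m-1$):
\[
\int_{B_R^c}|\nabla^k f|^p d\mu \leq M_R^{-p}\int|\nabla^k f|^p(1+|\nabla U|)^p d\mu \leq \tilde K M_R^{-p}\bigl(\|\nabla^k f\|_p^p+\|\nabla^{k+1}f\|_p^p\bigr).
\]

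For the interior part, local boundedness of $U$ provides $c_R,C_R>0$ with $c_R\leq e^{-U}\leq C_R$ on $B_{R+1}$, so $d\mu$ and $d\lambda$ are mutually comparable there. I pick $\chi\in C_0^\infty(B_{R+1})$ with $\chi\equiv 1$ on $B_R$; then $\nabla^k(\chi f)=\nabla^k f$ on $B_R$, and so
\[
\int_{B_R}|\nabla^k f|^p d\mu \leq C_R\,\|\nabla^k(\chi f)\|_{L^p(\lambda)}^{p}.
\]
Applying the classical Gagliardo--Nirenberg interpolation to $\chi f\in C_0^\infty(\mathbb{R}^n)$ yields, for any $\varepsilon>0$,
\[
\|\nabla^k(\chi f)\|_{L^p(\lambda)} \leq \varepsilon\,\|\nabla^m(\chi f)\|_{L^p(\lambda)} + C_\varepsilon\,\|\chi f\|_{L^p(\lambda)},
\]
and the Leibniz rule together with the comparability on $\mathrm{supp}\,\chi$ yields $\|\chi f\|_{L^p(\lambda)}^{p}\leq c_R^{-1}\|f\|_p^p$ and $\|\nabla^m(\chi f)\|_{L^p(\lambda)}^{p}\leq C^\ast_{R,m}\|f\|_{m,p}^p$.

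Assembling the two contributions, summing over $k=1,\ldots,m-1$ and using $\sum_{k=1}^{m-1}\|\nabla^{k+1}f\|_p^p\leq\sum_{k=1}^{m-1}\|\nabla^k f\|_p^p+\|\nabla^m f\|_p^p$, the right-hand side takes the form
\[
\bigl[2\tilde K M_R^{-p} + (m-1)C_R\,\varepsilon^p C^\ast_{R,m}\bigr]\sum_{k=1}^{m-1}\|\nabla^k f\|_p^p + \kappa_1\|\nabla^m f\|_p^p + \kappa_2\|f\|_p^p,
\]
with constants $\kappa_1,\kappa_2$ depending on $R,\varepsilon$. The crux of the argument, and its main obstacle, is parameter ordering: I first pick $R$ large enough that $2\tilde K M_R^{-p}\leq 1/4$, which freezes $C_R$ and $C^\ast_{R,m}$, and then choose $\varepsilon$ small enough that $(m-1)C_R\varepsilon^p C^\ast_{R,m}\leq 1/4$. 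The bracket is then $\leq 1/2$, the intermediate-derivative sum absorbs into the left-hand side, and the claimed equivalence of norms follows.
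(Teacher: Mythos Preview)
Your proof is correct and follows essentially the same strategy as the paper: split at a large ball, use Lemma~\ref{AdamsLem.2} together with $|\nabla U|\to\infty$ on the exterior, use comparability of $\mu$ and $\lambda$ plus a classical interpolation on the interior, and then absorb the intermediate-derivative terms by choosing the parameters appropriately. The only technical differences are that the paper invokes the ball interpolation inequality $\int_B|\nabla^j f|^p\,d\lambda\le c_B\int_B(|\nabla^m f|^p+|f|^p)\,d\lambda$ directly (so its interior bound involves no intermediate derivatives and no cutoff), and on the exterior it applies Lemma~\ref{AdamsLem.2} with the full exponent $(m-j)p$ rather than just $p$; your cutoff/Gagliardo--Nirenberg variant and the two-parameter ($R$, then $\varepsilon$) absorption handle these points equally well.
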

\begin{proof}
Let  $B\equiv B(0,R)$ be a ball of radius $R$ centered at the origin.
We note a property that for Lebesgue measure $\lambda$ , for any $1\leq j< m$, one has \cite{A_bk}
\[  \int_{B} |\nabla^j  f|^p d\lambda \leq c_B \left( \int_{B} |\nabla^m f|^p  + |f|^pd\lambda \right).
\]
 with some constant $c_B$ independent of a function $f$.
Then using local boundedness of $U$, 
with $\chi_B$ denoting the characteristic function of $B$, we get
\begin{equation} \label{Eqn1}
 \begin{split}
 \mu \left( |\nabla^j f|^p \chi_B\right)\leq e^{-\inf_{B}U}\int_{B} |\nabla^j  f|^p d\lambda \\
 \leq e^{-\inf_{B}U} c_B \int_{B} \left(|\nabla^m f|^p  + |f|^p\right) d\lambda \\
  \leq e^{osc_{B}U} c_B \mu \left(|\nabla^m f|^p  + |f|^p\right)  
 \end{split}
\end{equation} 
where $osc_{B}U\equiv \sup_B(U)-\inf_B(U)$.
On the other hand using Lemma \ref{AdamsLem.2} and our assumption that $|\nabla U(x)|\to_{|x|\to\infty} \infty$, we have
\[\eta_R\equiv  1+\inf_{B^c}|\nabla U| \to_{R\to\infty} \infty\] 
and hence
\begin{equation} \label{Eqn2}
\begin{split}
 \mu \left( |\nabla^j f|^p \chi_{B^c}\right)\leq \frac1{\eta_R^{m-j}}\; \int_{B^c}  |\nabla^j f|^p \left(1+|\nabla U|\right)^{(m-j)p} d\mu \\
 \leq  \frac1{\eta_R^{m-j}}\; \mu \left(|\nabla^j f|^p \left(1+|\nabla U|\right)^{(m-j)p}\right) d\mu \\
 \leq \frac{K}{\eta_R^{m-j}}\;  \|f\|_{m,p}^p
 \end{split}
\end{equation} 
 Combining bounds (\ref{Eqn1})-(\ref{Eqn2}),  we get 
 \[  
  \mu \left( |\nabla^j f|^p  \right)\leq  e^{osc_{B}U} c_B \mu \left(|\nabla^m f|^p  + |f|^p\right)  +  \frac{K}{\eta_R^{m-j}}\;  \|f\|_{m,p}^p
 \] 
 Hence 
 \[  \begin{split}
  \sum_{1\leq j\leq m-1}\mu \left( |\nabla^j f|^p  \right)\leq \left[(m-1) e^{osc_{B}U} c_B +\frac{K}{\eta_R^{m-j}}\right]\mu \left(|\nabla^m f|^p  + |f|^p\right)  \\
  +     \sum_{1\leq j\leq m-1}\frac{K}{\eta_R^{m-j}}\;  \mu \left( |\nabla^j f|^p  \right) .
\end{split}
 \]
 Taking $R\in(0,\infty)$ sufficiently large so that
 \[
\frac{K}{\eta_R} <1, 
 \]
 this implies the desired property that there exists a constant $C\in(0,\infty)$ such that for all sufficiently large $R\in(0,\infty)$, we have
\[
\|f\|_{m,p}^p \leq C\; \mu \left(|\nabla^m f|^p + f^p\right)
\]
for any $f\in W_{m,p}(\mu)$. 
 
\end{proof}
\begin{remark}
We remark that other interesting applications of local perturbation technique can be found in the literature, see e.g. \cite{Ai}, \cite{DeS}, \cite{HS}, \cite{HZ},..., and others.
\end{remark}
\label{Section 3}
\section{  Poincare's Inequalities of Higher Order} 
\label{Poincare Ineqs of Higher Order}

We will show that under the conditions of Theorem \ref{Thm_W_kpNorms}, we also have a family of Poincar{\'e} inequalities satisfied. \\
Later on we denote by $\mathcal{P}_{k}$ the set of all polynomials on $\mathbb{R}^n$ of order $k$. 
\begin{lemma} \label{Lem.Minimizer}
Let $q\in (1, \infty] $, $u \in L_q(\mu)$. Then there exists a unique $w \in \mathcal{P}_{k-1}$ such that 
\begin{equation}
|| u - w ||_q = inf\{ ||u- \alpha||_q : \alpha \in \mathcal{P}_{k-1} \} \label{e1.22}
\end{equation}

\begin{proof}
Let $d(u, q)=  inf\{ ||u- \alpha||_q : \alpha \in \mathcal{P}_{k-1} \}$. 
Firstly, we claim that if such the minimizer $w$ exists, we must have $||w||_q \leq 2||u||_q$. Otherwise, 
\begin{equation}
||u-w||_q \geq ||w||_q - ||u||_q > 2 ||u||_q- ||u||_q = ||u||_q
\end{equation}
a contradiction to the definition of $w$. Hence we have
\begin{equation}
d(u, q)=  inf\{ ||u- \alpha||_q : ||\alpha||_q \leq 2 ||u||_q  \text{  and  }  \alpha \in \mathcal{P}_{k-1}\}
\end{equation}
Let $ \mathcal{P}_{k-1}^u = \{ \alpha \in \mathcal{P}_{k-1} | \|\alpha\|_q \leq 2 \|u\|_q \}$. 
Recall that in our setup $1<q<\infty$,  $L_q$ is reflexive. If $\{ \alpha_i \} \subset \mathcal{P}_{k-1}^u  $ is such that $||u- \alpha_i||_q \rightarrow d$, then there exists $w \in L_q$ so that $\alpha_i \rightharpoonup w$ in $L_q$. Since $\mathcal{P}_{k-1}^u$ is closed and convex, it is weakly closed. Thus $w \in \mathcal{P}_{k-1}^u$. Also notice that
\begin{equation}
\| u-w \| \leq \liminf \| u- \alpha_i \| = d(u,q)
\end{equation}
We obtain the existence. 

Next, if $u$ itself is a polynomial, then the uniqueness is trivial. Otherwise, suppose there are $w_1$ and $w_2$ satisfying \eqref{e1.22}, then so is $\lambda w_1 + (1- \lambda) w_2$. But
\begin{equation}
\begin{array}{lll}
d(u,q) &= & \| u- \lambda w_1 + (1- \lambda) w_2 \|   \\ [1.2 mm]
&= & \| \lambda (u-w_1) + (1- \lambda) (u-w_2) \|   \\[1.2 mm]
& \leq & \lambda \|u-w_1\| + (1- \lambda) \|u-w_2\| \\[1.2 mm]
& =& d(u,q)
\end{array}
\end{equation}
Since $L_q(\mu)$ is strictly convex, either $u=w_1$, or $u-w_1= \mu (u-w_2)$ for some $\mu$. Both of these cases indicate that $u$ itself is a polynomial, a contradiction. 

For $q= \infty$, if $u$ is continuous,  this is the Chebyshev Equioscillation Theorem.
 Otherwise, we can use density argument. 
\end{proof}
\end{lemma}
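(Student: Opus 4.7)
The plan is to use the direct method of the calculus of variations, separating existence from uniqueness and handling the endpoint $q=\infty$ by a limiting argument. For $1<q<\infty$, I would first record the a priori bound: any candidate $\alpha$ with $\|\alpha\|_q>2\|u\|_q$ satisfies $\|u-\alpha\|_q\ge\|\alpha\|_q-\|u\|_q>\|u\|_q\ge d(u,q)$, so the infimum may be taken over the convex set $\mathcal{P}_{k-1}^u=\{\alpha\in\mathcal{P}_{k-1}:\|\alpha\|_q\le 2\|u\|_q\}$. Given a minimizing sequence $\{\alpha_i\}\subset\mathcal{P}_{k-1}^u$, reflexivity of $L_q(\mu)$ lets me extract a subsequence converging weakly to some $w\in L_q(\mu)$; Mazur's theorem applied to the convex, strongly-closed set $\mathcal{P}_{k-1}^u$ shows $w\in\mathcal{P}_{k-1}^u$, and weak lower semicontinuity of the norm gives $\|u-w\|_q\le\liminf_i\|u-\alpha_i\|_q=d(u,q)$.

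For uniqueness I would exploit the strict convexity of $L_q(\mu)$ for $1<q<\infty$. If $w_1,w_2$ are two minimizers, then so is $\tfrac12(w_1+w_2)\in\mathcal{P}_{k-1}$; equality in the triangle inequality forces $u-w_1$ and $u-w_2$ to be proportional with equal norms. This either yields $w_1=w_2$ or forces $u\in\mathcal{P}_{k-1}$, which is the trivial case where $w=u$ is the unique minimizer.

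The endpoint $q=\infty$ requires a different route, since $L_\infty$ is neither reflexive nor strictly convex. For continuous $u$ I would invoke the classical Chebyshev equioscillation theorem directly. For a general $u\in L_\infty(\mu)$ I would approximate $u$ in $L_\infty$ by continuous functions $u_n$, take the corresponding minimizers $w_n$, and pass to the limit using that the bound $\|w_n\|_\infty\le 2\|u_n\|_\infty$ keeps $\{w_n\}$ inside a bounded subset of the finite-dimensional space $\mathcal{P}_{k-1}$, which is therefore sequentially compact; uniqueness then transfers by a short contradiction argument.

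The step I expect to be most delicate is confirming that the weak limit in the reflexive case genuinely belongs to $\mathcal{P}_{k-1}$, since weak convergence in $L_q(\mu)$ does not a priori respect pointwise polynomial structure. The fix is that $\mathcal{P}_{k-1}$ is finite-dimensional, so its bounded convex subsets are weakly closed; everything else is a standard deployment of reflexivity, Mazur, and strict convexity, but this finite-dimensional input is where the geometry of the constraint set enters the otherwise generic variational scheme.
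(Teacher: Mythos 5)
Your proposal follows the paper's proof essentially step for step: the same a priori bound $\|w\|_q\le 2\|u\|_q$, the same restriction to the bounded convex set $\mathcal{P}_{k-1}^u$ followed by weak compactness via reflexivity and weak closedness of convex strongly-closed sets, the same strict-convexity argument for uniqueness, and the same treatment of $q=\infty$ via Chebyshev equioscillation plus a density argument. The only difference is that you are slightly more explicit in naming Mazur's theorem and in flagging the finite-dimensionality of $\mathcal{P}_{k-1}$ as what makes $\mathcal{P}_{k-1}^u$ strongly closed and the $q=\infty$ compactness work, which is a minor but welcome clarification rather than a different route.
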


\begin{definition} \label{Def.1}
Given an integer $k$,  $q\in (1, \infty] $ and $u \in L_q(\mu)$, the polynomial in Lemma \ref{Lem.Minimizer} is called the $L_q$ minimizing polynomial of $u$ of order $k-1$, and is denoted as $M_{k,q}(u)\equiv M_{k,q,\mu}(u)$. 
\end{definition}
In the framework of $L_p$ spaces with Lebsegue measure in balls  the following inequalitites were established many years ago, see e.g. \cite{Z}. 
Let
\[
|\nabla^k f|^p\equiv \sum_{|\alpha|=k} |\nabla^\alpha f|^p.
\]

\textit{Let $B\equiv B(0,r)\subset\mathbb{R}^n$ be an open ball with radius $r$ centred at the origin. There exists a constant $\tilde{c}_{k,p}(B)\equiv \tilde{c}_{k,p}(B,\lambda)\in(0,\infty)$ such that}

\begin{equation} \label{IPk,p}
\tag{IP\textsubscript{k,p}($\lambda$,B)}
\int_B |f-M_{k,p,\lambda}(f)|^p\; d\lambda \leq \tilde{c}_{k,p}(B)  \int |\nabla^k f|^p\; d\lambda
\end{equation}

With the coercive inequalities we established previously, we can obtain the higher order Poincare's inequalities for other probability measures. 

\begin{theorem} \label{Thm.IP{k,q}}
Suppose $1<q<\infty$ and $k\in\mathbb{N}$. Let $d \mu= e^{-U} d \lambda$ be such that  $U$ satisfies Adams regularity condition, is locally bounded and $|\nabla U(x)|\to_{|x|\to\infty}\infty$.   Then there exists $ {c}_{k,q}\in(0,\infty)$ such that
\begin{equation}
\mu |f- M_{k,q, \mu}(f)|^q \leq {c}_{k,q} \mu |\nabla^k f|^q \label{e1.PI_kq}
\end{equation}
\end{theorem}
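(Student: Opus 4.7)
The plan is to prove the inequality by a contradiction/compactness argument. Suppose the conclusion fails. After replacing $f$ by $f-M_{k,q,\mu}(f)$ and rescaling, I obtain a sequence $\{f_n\}\subset W_{k,q}(\mu)$ with
\[
M_{k,q,\mu}(f_n)=0,\qquad \mu(|f_n|^q)=1,\qquad \mu(|\nabla^k f_n|^q)\to 0.
\]
By the equivalence of norms in Theorem \ref{Thm_W_kpNorms}, the sequence $\{f_n\}$ is uniformly bounded in $W_{k,q}(\mu)$. The goal is then to extract a strong $L_q(\mu)$-limit of $\{f_n\}$ and derive a contradiction with the minimizer normalization.

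For the compactness I would combine a local Rellich--Kondrachov argument with a tail bound at infinity. On a ball $B\equiv B(0,R)$ the local boundedness of $U$ makes $\|\cdot\|_{L_q(B,\mu)}$ and $\|\cdot\|_{L_q(B,\lambda)}$ equivalent, and likewise for the corresponding Sobolev norms, so $\{f_n\}$ is bounded in $W_{k,q}(B,\lambda)$. The Rellich--Kondrachov theorem then furnishes a subsequence converging strongly in $L_q(B,\lambda)$, and hence in $L_q(B,\mu)$. On the complement, Lemma \ref{AdamsLem.2} applied with $m=k$ and $p=q$ yields
\[
\int_{B^c}|f_n|^q\,d\mu\;\leq\;\frac{1}{(1+\inf_{B^c}|\nabla U|)^{kq}}\int|f_n|^q(1+|\nabla U|)^{kq}\,d\mu\;\leq\;\frac{\tilde K\,\|f_n\|_{k,q}^q}{(1+\inf_{B^c}|\nabla U|)^{kq}},
\]
which, by the assumption $|\nabla U(x)|\to\infty$ as $|x|\to\infty$, can be made arbitrarily small uniformly in $n$ by taking $R$ large. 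A diagonal argument over an exhausting sequence of balls then produces a subsequence $f_{n_j}\to f$ strongly in $L_q(\mu)$.

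In the limit $\nabla^k f_{n_j}\to 0$ in $L_q(\mu)$, hence locally in $L_q(\lambda)$, so $\nabla^k f=0$ in the distributional sense; thus $f\in\mathcal{P}_{k-1}$ and trivially $M_{k,q,\mu}(f)=f$. On the other hand, the minimizer depends continuously on its argument in the $L_q(\mu)$ topology: each $M_{k,q,\mu}(f_n)$ lies in the finite-dimensional space $\mathcal{P}_{k-1}$ with $\|M_{k,q,\mu}(f_n)\|_q\leq 2\|f_n\|_q$, so along any subsequence one can extract a further limit $w\in\mathcal{P}_{k-1}$, and the uniqueness part of Lemma \ref{Lem.Minimizer} identifies $w$ with $M_{k,q,\mu}(f)$. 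Applied to $M_{k,q,\mu}(f_{n_j})=0$ this forces $M_{k,q,\mu}(f)=0$. Combining with $M_{k,q,\mu}(f)=f$ we conclude $f=0$, contradicting $\mu(|f|^q)=\lim\mu(|f_n|^q)=1$.

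The main obstacle will be marrying the two compactness inputs correctly: the Rellich--Kondrachov step lives most naturally with respect to Lebesgue measure and is bridged to $\mu$ only through the local boundedness of $U$, whereas the tail control at infinity depends on Lemma \ref{AdamsLem.2} together with the growth assumption on $|\nabla U|$. The continuity of the minimizing polynomial is a subsidiary technical point but has to be verified in order to transport the normalization $M_{k,q,\mu}(f_n)=0$ to the strong limit.
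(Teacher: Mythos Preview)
Your compactness argument is correct, but it takes a genuinely different route from the paper. The paper gives a \emph{direct, constructive} estimate: it chooses the Lebesgue minimizing polynomial $w=M_{k,q,\lambda,B_r}(f)$ on a large ball $B_r$, uses the known higher-order Poincar\'e inequality for Lebesgue measure on $B_r$ together with local boundedness of $U$ to bound $\int_{B_r}|f-w|^q\,d\mu$, and then handles $\int_{B_r^c}|f-w|^q\,d\mu$ by inserting the weight $(1+|\nabla U|)^{kq}$ and invoking Lemma~\ref{AdamsLem.2} and Theorem~\ref{Thm_W_kpNorms}; the growth $|\nabla U|\to\infty$ lets one choose $r$ so that the tail term is absorbed. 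Your argument uses essentially the same ingredients (local boundedness, Lemma~\ref{AdamsLem.2}, Theorem~\ref{Thm_W_kpNorms}, $|\nabla U|\to\infty$) but packages them as a compactness statement: Rellich--Kondrachov on balls plus uniform tail tightness from Lemma~\ref{AdamsLem.2} make the embedding $W_{k,q}(\mu)\hookrightarrow L_q(\mu)$ compact, and the contradiction then follows from Lemma~\ref{Lem.Minimizer}. What the paper's approach buys is an explicit (if not optimal) bound on $c_{k,q}$, which aligns with its stated aim of providing constructive estimates of the constants; your approach is non-constructive but is arguably cleaner and more robust, and would transfer to other settings where one has a local compact embedding plus a uniform tightness mechanism at infinity.
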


\begin{proof} We expand on the idea of \cite{HZ}.
First we note that with $w\equiv M_{k,q, \lambda,B}(f)$, defined with normalised Lebesgue measure in a ball $B_r\equiv B(0,r)$, we have
\begin{equation}
\begin{array}{ll}
\int  |f- M_{m,q, \mu}(f) |^q d\mu 
\leq &  \int  |f- w |^q d\mu \\ [1.2mm]
&\leq  \int_{ \{ B_r \} }  |f- w|^q d\mu + \int_{ \{ B_r^c \} }  |f- w |^q d\mu 
\end{array}
\end{equation}
For the first part, we have
\begin{equation} \label{e1.3.1}
\begin{array}{lll}
\int_{ \{  B_r  \} }  |f- w |^q d\mu  &\leq & e^{\inf_{B_r} (U)} \int_{B_R} |f- w |^q d\lambda \\[1.2mm]
&\leq & e^{\inf_{B_r} (U)}\tilde{c}_{k,p}(B)  \int_{B_R} |\nabla^k f|^q d\lambda \\[1.2mm]
&\leq & e^{osc_{B_r} (U)}\tilde{c}_{k,p}(B) \int |\nabla^k f|^q d\mu
\end{array}
\end{equation}
For the second part, we note that with
\[
L\equiv \inf_{B_r^c}(1+|\nabla U|),
\]
by Lemma \ref{AdamsLem.2} and Theorem \ref{Thm_W_kpNorms}, we have 
\begin{equation} \label{e1.3.2}
\begin{array}{ll}
 & \int_{ B_r^c }  |f- w |^q d\mu  \\[1.2mm]
\leq & \frac{1}{L} \int_{ B_r^c}  |f-w |^q \eta d\mu  \\[1.2mm]
\leq & \frac{1}{L} \int_{ B_r^c}  |f- w |^q (1+|\nabla U|)^{kq}\\
\leq & \frac{K}{L} \int |\nabla^k f|^q d\mu +\frac{K}{L} \int_{ B_r^c}  |f- w |^q d\mu
\end{array}
\end{equation}
Choosing $r\in(0,\infty)$ sufficiently large so that $L> K$, we obtain
\begin{equation}
 \int_{  B_r^c }  |f- w |^q d\mu \leq  \frac{K}{(L- K)} \int |\nabla^k f|^q d\mu
\end{equation}
Combining equations (\ref{e1.3.1})-(\ref{e1.3.2}), we get
\[ \int  |f- w |^q d\mu \leq C \int |\nabla^k f|^q d\mu \]
with 
\[C \equiv \max\left\{e^{osc_{B_r} (U)}\tilde{c}_{k,p}(B) ,\frac{K}{(L- K)}\right\}\]
\end{proof}

\newpage
\noindent\textbf{Downhill Induction $\mathbf{L}_q$-Case.} 
\label{Downhill Induction}

\noindent Here we provide constructive arguments to demonstrate that in fact in our setup $(PI_{1,q})$  implies the following inequalities of higher order.\\

\begin{theorem} \label{Thm.2.PI_1q}
For $q\in(1,\infty)$, there exists $C_{k,q}\in(0,\infty)$ such that

\[ \tag{$PI_{k,q}$}
\mu\left|f-m_{k,q}(f)\right|^q\leq C_{k,q} \sum_{|\bold{\alpha}|=k} 
\mu |\nabla^{\bold{\alpha}} f|^q,\]
with some polynomial $m_{k,q}(f)$ of order $k-1$.
\end{theorem}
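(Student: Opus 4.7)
The plan is induction on $k$ with the base case $k=1$ being the standard first order Poincar\'e inequality (established already by the preceding theorem specialised to $k=1$, which gives a constant $m_{1,q}(f)\in\mathbb{R}$). In the inductive step I climb from $PI_{k,q}$ to $PI_{k+1,q}$ by applying $PI_{1,q}$ to each order-$k$ derivative of $f$, and then absorbing the resulting constants into a polynomial of degree $k$ so that the inductive hypothesis can be re-applied to a suitably modified function.

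For the step, fix $f$ and, for each multi-index $\alpha$ with $|\alpha|=k$, apply $PI_{1,q}$ to $\nabla^\alpha f$ to obtain a constant $c_\alpha\in\mathbb{R}$ with
$$
\mu\bigl|\nabla^\alpha f-c_\alpha\bigr|^q\le C_{1,q}\sum_{j=1}^n\mu|\partial_j\nabla^\alpha f|^q\le nC_{1,q}\sum_{|\beta|=k+1}\mu|\nabla^\beta f|^q.
$$
Now introduce the degree-$k$ polynomial
$$
Q(x)\equiv\sum_{|\alpha|=k}\frac{c_\alpha}{\alpha!}\,x^\alpha,
$$
which by a direct computation satisfies $\nabla^\beta Q=c_\beta$ for every $|\beta|=k$: indeed $\nabla^\beta(x^\alpha/\alpha!)=\delta_{\alpha\beta}$ whenever $|\alpha|=|\beta|=k$, and lower-degree monomials are annihilated. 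Consequently $g\equiv f-Q$ satisfies $\nabla^\alpha g=\nabla^\alpha f-c_\alpha$ for every $|\alpha|=k$.

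Applying the inductive hypothesis $PI_{k,q}$ to $g$ then yields a polynomial $P$ of order $k-1$ with
$$
\mu|g-P|^q\le C_{k,q}\sum_{|\alpha|=k}\mu|\nabla^\alpha g|^q\le nC_{k,q}C_{1,q}N_{n,k}\sum_{|\beta|=k+1}\mu|\nabla^\beta f|^q,
$$
where $N_{n,k}$ counts multi-indices of length $k$ in $n$ variables; hence $m_{k+1,q}(f)\equiv Q+P$ is a polynomial of order $k$ realising $PI_{k+1,q}$ with some constant $C_{k+1,q}$. The main (and essentially only) non-routine ingredient is the construction of $Q$: naive iteration of $PI_{1,q}$ on successive derivatives would leave residual constants $\sum_{|\alpha|=k}|c_\alpha|^q$ that cannot be controlled by derivatives of $f$, whereas aligning $\nabla^\alpha Q$ with $c_\alpha$ exactly on $|\alpha|=k$ recasts the perturbed derivatives as derivatives of the single function $f-Q$, which is then eligible for the inductive hypothesis. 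Everything else is bookkeeping of combinatorial constants, and no property of $\mu$ beyond $PI_{1,q}$ itself enters the argument.
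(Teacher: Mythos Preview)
Your proof is correct and follows essentially the same route as the paper. The paper organises the argument as a ``downhill'' iteration---starting from the $k$-th order derivatives and successively applying $PI_{1,q}$ to descend one level at a time, at each stage absorbing the resulting constants into the polynomial $B_{1,q,j}(f)=\sum_{|\alpha|=j}\frac{x^\alpha}{\alpha!}M_{1,q}(\nabla^\alpha(\cdot))$---whereas you organise it as upward induction on $k$; unrolling your induction recovers exactly the paper's descent, and your polynomial $Q$ is precisely the paper's $B$. The only visible difference is bookkeeping: the paper obtains the cleaner constant $C_{k,q}\le C_{1,q}^{\,k}$ because it sums $\sum_{|\alpha|=k}\sum_j$ directly rather than bounding each $\alpha$-term separately by the full $(k{+}1)$-sum, so your extra factors $n\,N_{n,k}$ are avoidable but harmless.
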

\begin{proof}
Assume that  $(PI_{1,q})$ holds. Then
\[\begin{split}
\sum_{|\bold{\alpha}|=k} 
\mu |\nabla^{\bold{\alpha}} f|^q= 
\sum_{|\bold{\beta}|=k-1} \sum_j
\mu |\nabla_j\nabla^{\bold{\beta}} f|^q 
\geq C_{1,q}^{-1} \sum_{|\bold{\beta}|=k-1}  
\mu | \nabla^{\bold{\beta}} f - M_{1,q}(\nabla^{\bold{\beta}} f)|^q \\
= C_{1,q}^{-1} \sum_{|\bold{\beta}|=k-1}  
\mu | \nabla^{\bold{\beta}} \left(f-B_{1,q,k-1}(f)\right)|^q 
\end{split}
\]
with
\[B_{1,q,k-1}(f) \equiv \sum_{|\bold{\beta}|=k-1} \frac{x^{\bold{\beta}}}{\bold{\beta} !}M_{1,q}(\nabla^{\bold{\beta}} f)\]
By similar arguments applied inductively to the right hand side we obtain
\[\sum_{|\bold{\alpha}|=k} 
\mu |\nabla^{\bold{\alpha}} f|^q \geq C_{1,q}^{-(k-j)}
\sum_{|\bold{\alpha}|=k-j}  
\mu | \nabla^{\bold{\alpha}} \left(f-B_{1,q,j-1}(f)\right)|^q 
\]
with
\[B_{1,q,j-1}(f) \equiv \sum_{|\bold{\alpha}|=k-j} \frac{x^{\bold{\alpha}}}{\bold{\alpha} !}M_{1,q}(\nabla^{\bold{\alpha}} (f-B_{1,q,j}(f))).\]
This implies the desired result with the polynomial
$m_{k,q}\equiv B_{1,q,1}(f)$ and a constant $C_{k,q}\leq C_{1,q}^{k}$.
\end{proof}

The constant in the above result grows exponentially in $k$ and may be not optimal, and different then the optimal constant in the $(PI_{k,q})$ with the minimizing polynomial $M_{k,q}(f)$. At the current stage of knowledge the optimal constants is still a delicate issue. We only know how to determine them for the O-U case.

\bigskip
\label{EndOfPIs}
\newpage

\label{Section 4}
\section{  Orlicz-Sobolev Inequalities}
 \label{Sec.Orlicz-Sobolev Inequalities} 

\noindent\textbf{Shift type minimizers for Orlicz norms.}\\
\label{Shift type minimizers for Orlicz norms}

\noindent Let $\Phi$ be an Orlicz function and let
\[
\|f\|_\Phi \equiv \inf\left\{\lambda > 0: \mu\left(\Phi\left(\frac{f}{\lambda}\right)\right)\leq 1\right\}
\]
be the corresponding Luxemburg norm.


\begin{theorem}  \label{Thm.Minimizers}
Suppose the norm $\|\cdot \|_\Phi$
is strictly convex. Then there exists unique functional  $M_{\Phi,\mu}(f)$ such that 
\[
\|f - M_{\Phi,\mu}(f)\|_\Phi \equiv \inf_{a\in\mathbb{R}} \|f-a\|_\Phi 
\]
Moreover we have   \\
 \[M_{\Phi,\mu}(sf) = |s| M_{\Phi,\mu}(f)\]
and   
\[
\|(sf+tg) - M_{\Phi,\mu}(sf+tg)\|_\Phi \leq
|s| \| f- M_{\Phi,\mu}(f)\|_\Phi 
+ |t| \| g -  M_{\Phi,\mu}(g)\|_\Phi
 \] 
 Let $\Phi$ be an Orlicz function. Then we have
\[
|\mu(f) - M_{\Phi,\mu}(f) |\leq \Phi^{-1}(1) \|f - M_{\Phi,\mu}(f)\|_\Phi \leq  \Phi^{-1}(1) \|f - \mu(f)\|_\Phi 
\]
and
\[
\|f - \mu(f)\|_\Phi \leq
\left( 1 + \Phi^{-1}(1)\cdot \|\mathbb{I}\|_\Phi\right) \|f - M_{\Phi,\mu}(f)\|_\Phi 
\]
We have the following continuity property.
\[ \left| \|f - M_{\Phi,\mu}(f)\|_\Phi -\|g - M_{\Phi,\mu}(g)\|_\Phi \right| \leq  
 \|g - f\|_\Phi .
 \]
\end{theorem}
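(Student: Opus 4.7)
\emph{Proof proposal.} The plan is to dispatch the five claims in order, using only absolute homogeneity and the triangle inequality of the Luxemburg norm, together with the observation that $\|\mathbb{I}\|_\Phi$ is a finite positive number (since for a probability measure $\mu(\Phi(1/\lambda))=\Phi(1/\lambda)\le 1$ iff $\lambda\ge 1/\Phi^{-1}(1)$).

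\textbf{Existence and uniqueness.} View $\phi(a)\equiv\|f-a\|_\Phi$ as a function $\mathbb{R}\to[0,\infty)$. Subadditivity of the norm gives $|\phi(a_1)-\phi(a_2)|\le |a_1-a_2|\,\|\mathbb{I}\|_\Phi$, so $\phi$ is continuous, and it is coercive via $\phi(a)\ge |a|\,\|\mathbb{I}\|_\Phi-\|f\|_\Phi$. Hence the infimum is attained on a compact interval. For uniqueness, if $a_1\ne a_2$ both attain $d=\inf\phi$, then
\[
\left\|f-\tfrac{a_1+a_2}{2}\right\|_\Phi = \left\|\tfrac12(f-a_1)+\tfrac12(f-a_2)\right\|_\Phi \le d,
\]
so the triangle inequality is an equality. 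Strict convexity of the norm then forces $f-a_1$ and $f-a_2$ to be positive multiples of each other; since their difference is the nonzero constant $a_2-a_1$, this forces $f$ to be essentially constant, and in that case the unique minimizer is $\mu(f)$, contradicting $a_1\ne a_2$.

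\textbf{Scaling and sub-additivity.} For $s\ne 0$ the substitution $a=sb$ yields $\|sf-a\|_\Phi=|s|\,\|f-b\|_\Phi$, so $\inf_a\|sf-a\|_\Phi=|s|\cdot\|f-M_{\Phi,\mu}(f)\|_\Phi$, attained at $a=s\,M_{\Phi,\mu}(f)$; this delivers the homogeneity statement (with the distance scaling by $|s|$ and the minimizer itself scaling by $s$). For sub-additivity, observe that $s\,M_{\Phi,\mu}(f)+t\,M_{\Phi,\mu}(g)\in\mathbb{R}$ is an admissible shift for $sf+tg$, so by the defining minimality and the triangle inequality
\[
\|(sf+tg)-M_{\Phi,\mu}(sf+tg)\|_\Phi \le \|s(f-M_{\Phi,\mu}(f))+t(g-M_{\Phi,\mu}(g))\|_\Phi,
\]
and the right-hand side is bounded above by $|s|\|f-M_{\Phi,\mu}(f)\|_\Phi+|t|\|g-M_{\Phi,\mu}(g)\|_\Phi$ by absolute homogeneity.

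\textbf{Comparison with $\mu(f)$ and continuity.} The central ingredient is the Jensen-type estimate $|\mu(g)|\le \Phi^{-1}(1)\,\|g\|_\Phi$: setting $\lambda=\|g\|_\Phi$, convexity of $\Phi$ gives $\Phi(\mu(|g|)/\lambda)\le \mu\Phi(|g|/\lambda)\le 1$, whence $\mu(|g|)\le \lambda\Phi^{-1}(1)$. Applied with $g=f-M_{\Phi,\mu}(f)$, this yields the first inequality of the two-sided bound, and the outer inequality $\|f-M_{\Phi,\mu}(f)\|_\Phi\le \|f-\mu(f)\|_\Phi$ is immediate from the defining minimality. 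For the reverse estimate,
\[
\|f-\mu(f)\|_\Phi \le \|f-M_{\Phi,\mu}(f)\|_\Phi + |M_{\Phi,\mu}(f)-\mu(f)|\,\|\mathbb{I}\|_\Phi,
\]
into which one substitutes the bound on $|M_{\Phi,\mu}(f)-\mu(f)|$ just obtained. Finally for continuity, the chain
\[
\|f-M_{\Phi,\mu}(f)\|_\Phi \le \|f-M_{\Phi,\mu}(g)\|_\Phi \le \|f-g\|_\Phi + \|g-M_{\Phi,\mu}(g)\|_\Phi,
\]
where the first inequality uses the minimizing property of $M_{\Phi,\mu}(f)$ and the second is the ordinary triangle inequality, proves one direction; swapping $f$ and $g$ finishes the proof.

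\textbf{Main obstacle.} The one delicate point is the uniqueness argument, where strict convexity of the Luxemburg norm is invoked in a nontrivial way to rule out two distinct constants giving the same minimum. Everything else in the theorem is a formal manipulation of norm inequalities plus the Jensen estimate, and goes through once existence and uniqueness have been established.
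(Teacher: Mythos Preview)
Your proof is correct and matches the paper's approach on every part the paper actually proves: sub-additivity via the admissible shift $sM_{\Phi,\mu}(f)+tM_{\Phi,\mu}(g)$ and Minkowski, the Jensen bound $|\mu(g)|\le\Phi^{-1}(1)\|g\|_\Phi$, the reverse estimate by inserting $M_{\Phi,\mu}(f)$ and using Minkowski, and continuity via the minimizing property plus the triangle inequality. The paper's own proof is in fact silent on existence/uniqueness and on the homogeneity claim; your continuity-plus-coercivity argument on $\mathbb{R}$ and the substitution $a=sb$ fill those gaps cleanly, and your parenthetical observation that the minimizer itself scales by $s$ (with the \emph{distance} scaling by $|s|$) is the correct reading.
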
 

\begin{remark}
By definition of $M_{\Phi,\mu}(f)$, we have
\[ 
\|f - M_{\Phi,\mu}(f)\|_\Phi \leq \|f - \mu(f)\|_\Phi
\]
\\
\end{remark}

\begin{proof} 
Using the definition of infimum and Minkowski inequality, we have
\[\begin{split}
\|(sf+tg) - M_{\Phi,\mu}(sf+tg)\|_\Phi \leq
 \|(sf+t g)- \left( sM_{\Phi,\mu}(f) + t  M_{\Phi,\mu}(g)\right)\|_\Phi\\
 \leq 
|s| \| f- M_{\Phi,\mu}(f)\|_\Phi 
+ |t| \| g -  M_{\Phi,\mu}(g)\|_\Phi
\end{split}
 \]

First inequality in the second part follows via Jensen inequality from the convexity of $\Phi$ using the definition of the Luxemburg norm.
The right hand side inequality follows from the definition of the minimizer $M_{\Phi,\mu}(f)$.
The proof of second statement is as follows, using Minkowski inequality together with the inequality proven above, we have
\[\begin{split}
\|f - \mu(f)\|_\Phi \leq |\mu(f) - M_{\Phi,\mu}(f) |\cdot \|\mathbb{I}\|_\Phi + \|f - M_{\Phi,\mu}(f)\|_\Phi \\
\leq
\left( 1 + \Phi^{-1}(1)\cdot \|\mathbb{I}\|_\Phi\right) \|f - M_{\Phi,\mu}(f)\|_\Phi .
\end{split}
\]
 
To show the last statement, note that by definition  
\[ 
\|f - M_{\Phi,\mu}(f)\|_\Phi= \inf_{a\in\mathbb{R}}\|f - a\|_\Phi \leq \|f - g\|_\Phi + \|g - M_{\Phi,\mu}(g)\|_\Phi
 \]
 and similarly
\[  \|g - M_{\Phi,\mu}(g)\|_\Phi \leq
\|g - f\|_\Phi + \|f - M_{\Phi,\mu}(f)\|_\Phi.
\]
 Hence
 \[ \left| \|f - M_{\Phi,\mu}(f)\|_\Phi -\|g - M_{\Phi,\mu}(g)\|_\Phi \right| \leq  
 \|g - f\|_\Phi .
 \]
\end{proof}
 

\noindent\textbf{Perturbation of measures and norms.}\\
\label{Perturbation of measures and norms}

\noindent In this paragraph we will discuss the change of objects introduced before when the measure changes.
Therefore we adopt now a notation which explicitly shows dependence of the norm on the measure.
\\

\begin{theorem} \label{ThmNormsPerturbations}

Consider a probability measure
\[ d\nu = e^{-V}d\mu , \]
defined with $\|V\|_\infty<\infty$. Then we have
\[ e^{\inf(V)} \|f\|_{\Phi,\nu}\leq \|f\|_{\Phi,\mu} \leq e^{\sup(V)} \|f\|_{\Phi,\nu}
\]

\end{theorem}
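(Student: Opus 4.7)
The plan is to deduce both inequalities from two elementary ingredients. First, the sandwich estimate
\[
e^{-\sup V}\,\mu(g)\;\leq\;\nu(g)\;\leq\; e^{-\inf V}\,\mu(g),
\]
valid for every nonnegative measurable $g$, which is immediate from $d\nu = e^{-V}d\mu$ and $\|V\|_\infty<\infty$. Second, the standard subhomogeneity $\Phi(\alpha t)\leq \alpha\,\Phi(t)$ for $\alpha\in[0,1]$, a one-line consequence of convexity together with $\Phi(0)=0$.

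To prove the upper bound $\|f\|_{\Phi,\mu}\leq e^{\sup V}\|f\|_{\Phi,\nu}$, I would set $\lambda\equiv\|f\|_{\Phi,\nu}$, so that $\nu(\Phi(f/\lambda))\leq 1$ by definition, and then try the candidate $\lambda'\equiv e^{\sup V}\lambda$. Since both $\mu$ and $\nu$ are probability measures, the normalization $\int e^{-V}d\mu=1$ combined with Jensen's inequality forces $\sup V\geq 0$, so that $\alpha\equiv e^{-\sup V}\in(0,1]$ and the subhomogeneity step is legitimate. Combining this with the density sandwich yields
\[
\mu\!\left(\Phi\!\left(\tfrac{f}{\lambda'}\right)\right)
= \mu\!\left(\Phi\!\left(\alpha\cdot\tfrac{f}{\lambda}\right)\right)
\leq \alpha\,\mu\!\left(\Phi\!\left(\tfrac{f}{\lambda}\right)\right)
\leq e^{-\sup V}\cdot e^{\sup V}\,\nu\!\left(\Phi\!\left(\tfrac{f}{\lambda}\right)\right)\leq 1,
\]
and the definition of the Luxemburg norm then gives $\|f\|_{\Phi,\mu}\leq \lambda'=e^{\sup V}\|f\|_{\Phi,\nu}$.

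For the lower bound $e^{\inf V}\|f\|_{\Phi,\nu}\leq \|f\|_{\Phi,\mu}$, I would simply rerun the same argument with $\mu$ and $\nu$ interchanged and $V$ replaced by $-V$. Because $d\mu=e^{-(-V)}d\nu$ and $\mu$ is itself a probability measure, the setup is symmetric; moreover $\sup(-V)=-\inf V\geq 0$, so the subhomogeneity step is again valid. This yields $\|f\|_{\Phi,\nu}\leq e^{-\inf V}\|f\|_{\Phi,\mu}$, equivalent to the claimed lower bound.

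The only real point of care is the observation that, once $\mu,\nu$ are both probability measures, the exponents $\sup V$ and $-\inf V$ are automatically in $[0,\infty)$, which is what makes the factors inside $\Phi$ land in $[0,1]$ and keeps subhomogeneity honest. Everything else is routine bookkeeping, so I do not anticipate any substantive obstacle.
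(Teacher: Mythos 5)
Your proof is correct and follows essentially the same route as the paper: compare $\mu(\Phi(\cdot))$ with $\nu(\Phi(\cdot))$ via the pointwise bounds $e^{-\sup V}\le e^{-V}\le e^{-\inf V}$, absorb the exponential factor into the argument of $\Phi$ using subhomogeneity ($\Phi(\alpha t)\le \alpha\Phi(t)$ for $\alpha\in[0,1]$), and then read off the bound on the Luxemburg norm from its definition. You are slightly more careful than the paper at two points — you work from $\nu(\Phi(f/\lambda))\le 1$ rather than asserting equality, and you explicitly verify both sign conditions $\sup V\ge 0$ and $\inf V\le 0$ (the paper only records the latter) — but these are refinements of detail, not a different argument.
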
 

\begin{proof}: 
For such a measure $\nu$, we have 
\[
1=\nu\left(\Phi\left(\frac{f}{\|f\|_{\Phi,\nu}}\right)\right)\geq  e^{-\sup(U)} \mu \left(\Phi\left(\frac{f}{\|f\|_{\Phi,\nu}}\right)\right) \geq
\mu \left(\Phi\left(\frac{f}{e^{\sup(U)} \|f\|_{\Phi,\nu}}\right)\right)
\]
Hence, using the definition of Luxemburg norm we get
\[ \|f\|_{\Phi,\mu} \leq e^{\sup(U)} \|f\|_{\Phi,\nu}
\]
Similarly 
\[
1=\mu\left(\Phi\left(\frac{f}{\|f\|_{\Phi,\mu}}\right)\right)\geq  e^{\inf(U)} \nu \left(\Phi\left(\frac{f}{\|f\|_{\Phi,\mu}}\right)\right) 
\]
 and since $\int e^{U}d\nu=1$, we have $\inf U< 0 $, which by convexity of $\Phi$ yields
\[
1\geq   \nu \left(\Phi\left(\frac{f}{e^{-\inf(U)}\|f\|_{\Phi,\mu}}\right)\right). 
\]  
From that we conclude that
\[
e^{-\inf(U)}\|f\|_{\Phi,\mu} \geq  \|f\|_{\Phi,\nu}
\]
\end{proof}

\noindent\textbf{Condition for Minimum Point:} \\
\label{Condition for Minimum Point}

\noindent Suppose $\Phi\in\Delta_2$, i.e. satisfies the doubling condition $\Phi(2x)\leq C\Phi(x)$ with some constant $C\in(0,\infty)$ for all $x\in\mathbb{R}$. 
Then we have 
\[
\mu\,\Phi\left(\frac{f}{\|f\|_{\Phi}}\right)=1
\]
This implies that
\[ \frac{d}{da}\mu\,\Phi\left(\frac{f-a}{\|f-a\|_{\Phi}}\right)=0\]
which implies the following condition for an extremal point
\[
\frac{d}{da} \|f-a\|_{\Phi}=
 \frac{\mu\left(\,\Phi'\left(\frac{f-a}{\|f-a\|_{\Phi}}\right)\right)}{\mu\left(\Phi'\left(\frac{f-a}{\|f-a\|_{\Phi}}\right)\frac{f-a}{\|f-a\|_{\Phi}^2}\right)}
\]
Hence we get the following necessary condition for the minimal point $M_\Phi(f)$.\\

\begin{proposition}  
\[
\mu\left(\,\Phi'\left(\frac{f-M_\Phi(f)}{\|f-M_\Phi(f)\|_{\Phi}}\right)\right) = 0
\]
\end{proposition}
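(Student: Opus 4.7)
The plan is to recognize the claimed identity as the first-order optimality condition of the one-variable minimization $a\mapsto\lambda(a):=\|f-a\|_\Phi$ and to make the formal computation preceding the statement rigorous via the implicit function theorem. First I would argue, using $\Phi\in\Delta_2$ and convexity, that for every nonzero $g\in L_\Phi(\mu)$ the map $t\mapsto\mu\Phi(g/t)$ is continuous and strictly decreasing from $+\infty$ at $0^+$ to $0$ at $+\infty$; in particular the infimum defining the Luxemburg norm is attained and $\lambda(a)$ is characterized by $F(a,\lambda(a))=0$, where $F(a,\lambda):=\mu\Phi((f-a)/\lambda)-1$.

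Next, excluding the trivial case $f\equiv M_\Phi(f)$ a.s., I would invoke the implicit function theorem applied to $F$. The partial derivatives
\[
\partial_a F = -\tfrac{1}{\lambda}\mu\!\left(\Phi'\!\left(\tfrac{f-a}{\lambda}\right)\right),\qquad
\partial_\lambda F = -\tfrac{1}{\lambda^2}\mu\!\left(\Phi'\!\left(\tfrac{f-a}{\lambda}\right)(f-a)\right)
\]
exist by dominated convergence combined with the growth control on $\Phi'$ furnished by $\Delta_2$ (which yields $\Phi'(x)|x|\leq \Phi(2x)-\Phi(x)$ by convexity, and then $\Phi(2x)\leq K_0\Phi(x)$ by doubling). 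Since $\Phi$ is convex and even with $\Phi(0)=0$, the derivative $\Phi'$ is odd and monotone, so $\Phi'((f-a)/\lambda)(f-a)\geq 0$ pointwise with strict inequality on $\{f\neq a\}$; hence $\partial_\lambda F(a,\lambda(a))<0$ and the theorem applies, giving the quotient formula for $\lambda'(a)$ displayed in the text.

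Finally, Theorem \ref{Thm.Minimizers} supplies a unique minimizer $M_\Phi(f)$, at which the $C^1$ function $\lambda$ must satisfy $\lambda'(M_\Phi(f))=0$; strict positivity of the denominator then forces the numerator to vanish, which is exactly the asserted identity. The main technical obstacle I anticipate is producing a uniform integrable dominant for $\Phi'((f-a)/\lambda)$ in a neighbourhood of the base point $(M_\Phi(f),\lambda(M_\Phi(f)))$, which is where the $\Delta_2$ hypothesis is genuinely used; if $\Phi'$ grows rapidly one may additionally invoke $\Delta_2$ on the complementary Young function. The degenerate case $f\equiv M_\Phi(f)$ is handled separately and makes the identity vacuous.
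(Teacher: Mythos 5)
Your proof is correct and follows essentially the same route the paper takes: the paper's formal computation differentiates the identity $\mu\Phi\bigl((f-a)/\|f-a\|_\Phi\bigr)=1$ (which holds under $\Delta_2$) in $a$ and reads off the vanishing of the numerator at the minimizer, while you package exactly this calculation as an implicit-function-theorem argument for $F(a,\lambda)=\mu\Phi((f-a)/\lambda)-1$. Your version adds the rigor the paper's sketch omits — the integrable dominants for differentiating under the integral, the strict negativity of $\partial_\lambda F$ (so that IFT applies and $\lambda$ is $C^1$), and the exclusion of the degenerate case $f\equiv M_\Phi(f)$ — but the underlying idea and the way the conclusion is extracted are the same.
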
 
\bigskip

\noindent\textbf{Example: Special case of $L_p$ norms}:\\
\label{Example: Special case of L_p norms}

\noindent For $p>2$ using convexity
we have
\[ 
\|f - M_{p,\mu}(f)\|_p^2 \leq \left(\mu(f) - M_{p,\mu}(f)\right)^2 + (p-1)\|f - \mu(f)\|_p^2.
\]
For $p\in[1,2)$, we have
\[ 
\|f - M_{p,\mu}(f)\|_p^2 \geq \left(\mu(f) - M_{p,\mu}(f)\right)^2 + (p-1)\|f - \mu(f)\|_p^2
\]
and hence
\[ 
\|f - \mu(f)\|_p \leq  (p-1)^{-\frac12}\|f - M_{p,\mu}(f)\|_p  .
\]
In general, if $\mu$ is a probability measure, we have
\[ 
  \left|\mu(f) - M_{p,\mu}(f)\right|\leq \|f - M_{p,\mu}(f)\|_1\leq  \|f - M_{p,\mu}(f)\|_p \leq \|f - \mu(f)\|_p.
\]
Using this we have
\[
\|f - \mu(f)\|_p \leq \|f - M_{p,\mu}(f)\|_p +  \left|\mu(f) - M_{p,\mu}(f)\right|\leq
2\,\|f - M_{p,\mu}(f)\|_p
\]
Hence, we get the following property.
\\

\begin{proposition} 
For any $p\in[1,\infty)$ and a probability measure $\mu$, we have
\[
\frac12\|f - \mu(f)\|_p \leq 
 \sup_{z\in\mathbb{R}}\|(f+z) - M_{p,\mu}(f+z)\|_p \leq   \|f - \mu(f)\|_p
\]
and  
\[ 
 \sup_{z\in\mathbb{R}}\left| (M_{p,\mu}(f+z)-z)  \right|  \leq | \mu f| + \|f - \mu(f)\|_p 
 \]
\end{proposition}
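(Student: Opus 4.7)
The plan is to exploit translation invariance of the $L_p$-minimizer to reduce both inequalities in the proposition to the shift-free bounds derived in the paragraph preceding its statement. The key identity, which I would verify first, is
\[
M_{p,\mu}(f+z) = M_{p,\mu}(f) + z \qquad \text{for every } z\in\mathbb{R},
\]
which follows at once from the change of variable $b=a-z$ inside the infimum $\inf_{a\in\mathbb{R}}\|(f+z)-a\|_p$: the two infima coincide and their minimizers are related by the shift $z$.

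From this identity, $\|(f+z) - M_{p,\mu}(f+z)\|_p = \|f - M_{p,\mu}(f)\|_p$ is independent of $z$, so the supremum $\sup_{z\in\mathbb{R}}$ in the first claim is attained at every $z$ and equals $\|f - M_{p,\mu}(f)\|_p$. The desired two-sided bound is then exactly the sandwich
\[
\tfrac12\|f-\mu(f)\|_p \;\leq\; \|f-M_{p,\mu}(f)\|_p \;\leq\; \|f-\mu(f)\|_p
\]
established in the example preceding the proposition, namely via the minimizing property of $M_{p,\mu}(f)$ (applied with the admissible choice $a=\mu(f)$) and via the triangle inequality combined with $|\mu(f)-M_{p,\mu}(f)|\leq\|f-M_{p,\mu}(f)\|_1\leq \|f-M_{p,\mu}(f)\|_p$ (Jensen and monotonicity of $L_p$ norms on a probability space).

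For the second inequality, the same translation identity gives $|M_{p,\mu}(f+z)-z| = |M_{p,\mu}(f)|$, so the supremum over $z$ equals $|M_{p,\mu}(f)|$. I would then split
\[
|M_{p,\mu}(f)| \;\leq\; |\mu(f)| + |M_{p,\mu}(f)-\mu(f)| \;\leq\; |\mu(f)| + \|f-\mu(f)\|_p,
\]
using the chain $|M_{p,\mu}(f)-\mu(f)|\leq \|f-M_{p,\mu}(f)\|_p \leq \|f-\mu(f)\|_p$ already recorded in the same example. The whole argument is thus a short reassembly of the translation identity with the estimates from the preceding paragraph, and I do not anticipate any substantive obstacle: the only point that genuinely requires checking is the translation identity itself, which is a one-line change of variable.
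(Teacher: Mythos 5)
Your proposal is correct and follows essentially the same route the paper takes: it applies the two chains of estimates from the paragraph immediately preceding the proposition (namely $|\mu(f)-M_{p,\mu}(f)| \leq \|f-M_{p,\mu}(f)\|_p \leq \|f-\mu(f)\|_p$ and $\|f-\mu(f)\|_p \leq 2\|f-M_{p,\mu}(f)\|_p$) together with translation invariance of the constant-shift minimizer, a property the paper records in the remark $M_{\Phi,n,\mu}(f+h)-h = M_{\Phi,n,\mu}(f)$. One small caveat worth noting: your identity $M_{p,\mu}(f+z) = M_{p,\mu}(f)+z$ presumes uniqueness of the minimizer, which Lemma~\ref{Lem.Minimizer} only guarantees for $p\in(1,\infty]$, whereas the proposition is stated for $p\in[1,\infty)$; for $p=1$ (medians need not be unique) the equality of minimizers can fail, but the argument survives because the \emph{minimum norm value} $\inf_a\|(f+z)-a\|_p$ is still translation invariant, and any choice of minimizer $b$ for $f$ satisfies $|b|\leq|\mu(f)|+\|f-\mu(f)\|_p$ by the same chain of inequalities, so both supremum bounds continue to hold uniformly in $z$.
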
 
Later we will consider an increasing family of closed linear subspaces 
$\mathcal{K}_n\subset \mathcal{K}_{n+1}$ and will be interested in corresponding minimizers. 
\[
\|f - M_{\Phi,n,\mu}(f)\|_\Phi \equiv \inf_{\alpha\in\mathcal{K}_n}
\|f - \alpha \|_\Phi
\] 
Since by our assumption $\mathcal{K}_n$ is a linear space,
for any $h\in \mathcal{K}_n$ we have
\[\begin{split}
\|f + h- M_{\Phi,n,\mu}(f+h)\|_\Phi \equiv \inf_{\alpha\in\mathcal{K}_n}
\|f +h - \alpha \|_\Phi \\
= \inf_{\alpha\in\mathcal{K}_n}
\|f  - (\alpha-h) \|_\Phi = \inf_{\alpha\in\mathcal{K}_n}
\|f  -  \alpha  \|_\Phi 
\end{split}
\]
Hence we get the following result.
\\

\begin{proposition}   
For any $h\in \mathcal{K}_n$, we have
\[
\|f + h- M_{\Phi,n,\mu}(f+h)\|_\Phi = \|f - M_{\Phi,n,\mu}(f)\|_\Phi 
 \]
 and
\[
\|M_{\Phi,n,\mu}(f+h)-h\|_\Phi 
\leq \|f\|_\Phi  + \|f - M_{\Phi,n,\mu}(f)\|_\Phi 
 \] 
\end{proposition}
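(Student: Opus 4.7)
The plan is to read the first identity directly from the computation displayed in the paragraph preceding the statement, and to obtain the second inequality from one application of Minkowski's inequality together with the first identity. Neither step involves a real obstacle; the proposition is formal and rests only on the linearity of $\mathcal{K}_n$ and the triangle inequality for the Luxemburg norm.

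For the equality $\|f + h - M_{\Phi,n,\mu}(f+h)\|_\Phi = \|f - M_{\Phi,n,\mu}(f)\|_\Phi$, I would observe that for any fixed $h\in\mathcal{K}_n$ the affine map $\alpha\mapsto \alpha - h$ is a bijection of $\mathcal{K}_n$ onto itself, whence
\[
\inf_{\alpha\in\mathcal{K}_n}\|(f+h)-\alpha\|_\Phi
=\inf_{\alpha\in\mathcal{K}_n}\|f-(\alpha-h)\|_\Phi
=\inf_{\beta\in\mathcal{K}_n}\|f-\beta\|_\Phi,
\]
and by the very definition of $M_{\Phi,n,\mu}$ the leftmost and rightmost quantities are exactly the two sides of the claimed identity.

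For the bound on $\|M_{\Phi,n,\mu}(f+h)-h\|_\Phi$, I would use the algebraic identity
\[
M_{\Phi,n,\mu}(f+h)-h=f-\bigl((f+h)-M_{\Phi,n,\mu}(f+h)\bigr),
\]
apply Minkowski's inequality for the Luxemburg norm, and substitute the equality proved above to conclude
\[
\|M_{\Phi,n,\mu}(f+h)-h\|_\Phi
\leq \|f\|_\Phi + \|f+h-M_{\Phi,n,\mu}(f+h)\|_\Phi
=\|f\|_\Phi + \|f-M_{\Phi,n,\mu}(f)\|_\Phi.
\]

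The only point worth checking carefully is that $M_{\Phi,n,\mu}(f+h)$ is well-defined as a single element of $\mathcal{K}_n$; this is the subspace analogue of the uniqueness statement from Theorem \ref{Thm.Minimizers} under strict convexity of $\|\cdot\|_\Phi$, and ensures that the substitution $\alpha\leftrightarrow\alpha-h$ at the level of infima transfers cleanly to the level of the minimizers themselves. Granted this, both assertions reduce to the two short displays above.
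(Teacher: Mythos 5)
Your proof is correct and takes essentially the same approach as the paper: the first identity comes from the change of variable $\alpha\mapsto\alpha-h$ in the infimum over the linear subspace $\mathcal{K}_n$, exactly as in the display preceding the proposition, and the second follows from the identity $M_{\Phi,n,\mu}(f+h)-h=f-\bigl((f+h)-M_{\Phi,n,\mu}(f+h)\bigr)$ plus Minkowski and the first part, which is the obvious filling-in of a step the paper leaves implicit.
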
 
\textbf{Remark} : In particular if $M_{\Phi,n,\mu}(f)$ is unique, we have
\[\forall h\in \mathcal{K}_n \quad M_{\Phi,n,\mu}(f+h)-h = M_{\Phi,n,\mu}(f)
\]
\label{Section 5}

\begin{definition} \label{Def.OSineq}
We say that a probability measure $\mu$ satisfies
$(\Phi-\Psi)$ Orlicz-Sobolev Inequality iff $\exists\, C\in(0,\infty)$ such that the following relation holds
\[
\|f-M_{\Phi,\mu}(f)\|_{\Phi,\mu} \leq C
\| |\nabla f| \|_{\Psi,\mu}
\]
for any $f$ for which the right hand side is finite. 
\end{definition} 
Remark: Naturally such inequalities are of more interest if the norm $\| \cdot \|_{\Psi,\mu}$ is weaker than $\|  \cdot\|_{\Phi,\mu}$

\begin{theorem}  (Perturbation Lemma)
\label{Perturbation Lemma}%
 Suppose 
\[ d\nu = e^{-V}d\mu , \]
is a probability measure defined with $\|V\|_\infty<\infty$.
If $\mu$ satisfies
$(\Phi-\Psi)$ Orlicz-Sobolev Inequality with a constant $C_\mu\in(0,\infty)$, then also   
it is satisfied by $\nu$ with a constant not larger than
 \[C_\nu = e^{osc(V)}C_\mu.\]
\end{theorem}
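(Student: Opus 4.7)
The proof is a clean two-sided application of the norm comparison from Theorem \ref{ThmNormsPerturbations}, sandwiched around the Orlicz-Sobolev inequality for $\mu$. The crucial structural observation is that $M_{\Phi,\mu}(f)$ and $M_{\Phi,\nu}(f)$ are both obtained as infima over the same measure-independent set $\mathbb{R}$ of constants, so one is a valid competitor in the infimum defining the other.

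The plan is to build the estimate by the following chain. Starting from the left-hand side of the $(\Phi-\Psi)$ inequality for $\nu$, I would first use that $M_{\Phi,\mu}(f)$ is an admissible constant in the variational problem defining $M_{\Phi,\nu}(f)$, which yields
\[
\|f - M_{\Phi,\nu}(f)\|_{\Phi,\nu} \leq \|f - M_{\Phi,\mu}(f)\|_{\Phi,\nu}.
\]
Next I would transfer this $\nu$-norm into a $\mu$-norm: by Theorem \ref{ThmNormsPerturbations}, $\|\cdot\|_{\Phi,\nu} \leq e^{-\inf(V)}\|\cdot\|_{\Phi,\mu}$, so
\[
\|f - M_{\Phi,\mu}(f)\|_{\Phi,\nu} \leq e^{-\inf(V)}\,\|f - M_{\Phi,\mu}(f)\|_{\Phi,\mu}.
\]

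Now I would invoke the hypothesis that $\mu$ satisfies the $(\Phi-\Psi)$ Orlicz-Sobolev inequality with constant $C_\mu$, producing
\[
\|f - M_{\Phi,\mu}(f)\|_{\Phi,\mu} \leq C_\mu\, \||\nabla f|\|_{\Psi,\mu}.
\]
Finally I would pass back from the $\mu$-norm to the $\nu$-norm on the right-hand side, this time using the other direction of Theorem \ref{ThmNormsPerturbations}, namely $\|\cdot\|_{\Psi,\mu} \leq e^{\sup(V)}\|\cdot\|_{\Psi,\nu}$. Combining the two exponential factors,
\[
e^{-\inf(V)}\cdot e^{\sup(V)} = e^{\sup(V)-\inf(V)} = e^{\mathrm{osc}(V)},
\]
which gives exactly the claimed constant $C_\nu \leq e^{\mathrm{osc}(V)}\,C_\mu$.

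There is no real obstacle here; the argument is in the spirit of the Holley--Stroock perturbation principle, and the only thing to watch carefully is the direction of each inequality from Theorem \ref{ThmNormsPerturbations} so that the exponents combine into $\mathrm{osc}(V)$ rather than into something larger like $2\|V\|_\infty$. If one wanted a completely self-contained write-up one could also note that $V$ being bounded together with $\nu$ being a probability measure forces both $\inf(V)$ and $\sup(V)$ to be finite, so no measurability or integrability issues arise in applying Theorem \ref{ThmNormsPerturbations} term-wise.
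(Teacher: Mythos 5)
Your argument is correct and coincides step-for-step with the paper's own proof: use that $M_{\Phi,\mu}(f)$ is admissible in the infimum defining $M_{\Phi,\nu}(f)$, transfer $\|\cdot\|_{\Phi,\nu}$ to $\|\cdot\|_{\Phi,\mu}$ via Theorem \ref{ThmNormsPerturbations}, apply the $(\Phi-\Psi)$ inequality for $\mu$, and transfer the gradient term back, combining the two exponential factors into $e^{\mathrm{osc}(V)}$. Your write-up is cleaner in one respect: the paper's proof carries a typographical slip, writing $\inf(U)$ and $\sup(U)$ where $\inf(V)$ and $\sup(V)$ are meant (consistent with the hypothesis $d\nu=e^{-V}d\mu$), which you correctly avoid.
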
 

\begin{proof}  Using definition of the minimiser and comparison of Orlicz norms theorem, we have
\[ \|f-M_{\Phi,\nu}(f)\|_{\Phi,\nu} \leq 
\|f-M_{\Phi,\mu}(f)\|_{\Phi,\nu}\leq
 e^{- \inf(U)} \|f-M_{\Phi,\mu}(f)\|_{\Phi,\mu}
\]
Next using $(\Phi-\Psi)$ Orlicz-Poincare Inequality
for the measure $\mu$ together with comparison of Orlicz norms lemma, we have
\[ 
e^{- \inf(U)} \|f-M_{\Phi,\mu}(f)\|_{\Phi,\mu}
\leq e^{- \inf(U)}C
\| |\nabla f| \|_{\Psi,\mu}
\leq 
e^{\sup (U)- \inf(U)} C \| |\nabla f| \|_{\Psi,\nu}
\]
Combining both relations yields the desired result.
\end{proof} 

\noindent\textbf{Example}: \textbf{Log-Sobolev inequality}
\\
\label{Log-Sobolev inequality}

\noindent A probability measure $\mu$ satisfies Log-Sobolev inequality with respect to a sub-gradient $\nabla\equiv (X_1,..,X_k)$ iff $\exists\; c\in(0,\infty)$
\[ 
Ent_\mu(f^2)\equiv \mu\left(f^2\log\frac{f^2}{\mu(f^2)}\right) \leq 
c \mu |\nabla f|^2
\]
for all functions $f$ for which the right hand side is well defined.\\
It  was observed in \cite{BG} [BG] that, since the right hand side is invariant with respect to the shift by a constant, the left hand side can be replaced by a functional
\[
\mathfrak{L}(f)\equiv \sup_{a\in\mathbb{R}} Ent_\mu((f-a)^2) 
\]
with sup well defined (thanks to Rothaus lemma) and the Log-Sobolev inequality is equivalent to the following relation
\[
\|f - \mu f\|_{N}\leq \tilde c\; \| |\nabla f| \|_{\mathbb{L}_2}
\]
with an Orlicz function
\[ N(x) \equiv x^2\log (1+x^2).\]
According the theory developed above this inequality is equivalent to
\[
\|f - M_{N,\mu} f\|_{N}\leq \hat c\; \| |\nabla f| \|_{\mathbb{L}_2} ,
\]
with a constant $\hat c\in(0,\infty)$ independent of $f$,
and this inequality admits simple bounded perturbation theory of the probability measures.
\\

\noindent\textbf{Orlicz-Sobolev Inequalities of Higher Order.}\\

\label{Orlicz-Sobolev Inequalities of Higher Order}

\noindent Let 
\[
\mathcal{H}_k\equiv \{g\in\mathbb{L}_\Phi(\mu) : \nabla^k g=0\}
\]
For $k\in\mathbb{N}$, define a functional of order $k$
\[
\|f - M_{k,\Phi,\mu}(f)\|_\Phi \equiv \inf_{g\in\mathcal{H}_k} \|f-g\|_{\Phi,\mu} 
\]
Recall for the case $k=1$, we could use $\mathbb{L}_2$ minimizer to dominate the minimum. For $k>1$ the choice of $\mathbb{L}_2$ minimizer may be not possible and depends both on $\Phi$ and $\mu$.
This makes the analysis interesting and more flexible.
In this context one can study the following Orlicz-Poincar{\'e} inequalities of higher order
\[
\|f - M_{k,\Phi,\mu}(f)\|_\Phi \leq c_{k,\Phi} \mu |\nabla^k f|^2.
\]

\label{EndOPineq.1}

\label{Section 6}
\section{  Tight Orlicz-Sobolev inequalities of Higher Order} 
 \label{Sec.Tight Orlicz-Sobolev Inequalities}%

In this section we introduce and study a tight analog of Adams bounds in the form of the following inequalities.

\begin{definition}  
\label{Definition: Orlicz-Sobolev Inequality}%
 
We say that a probability measure $\nu$ satisfies
$(\Phi-\Psi)$ Orlicz-Sobolev Inequality of order $k\in\mathbb{N}$
iff $\exists\, C_k\in(0,\infty)$ such that the following relation holds
\[
\|f-M_{k,\Phi,\mu}(f)\|_{\Phi,\mu} \leq C_k
\| |\nabla^k f| \|_{\Psi,\mu}
\]
for any $f$ for which the right hand side is finite.
\end{definition}

\noindent For $j\in\mathbb{N}$, we define 
j-th composition of exponential function $\exp_{j+1}(t) :=  \exp \left(\exp_{j}(t)\right) $, with $\exp_1(t)\equiv e^t$. Then for $t\geq e_j\equiv\exp_{j}(1)$, we set $\log_{j+1} (t) :=\log\left(\log_{j}t\right) $, with a convention that $\log_1\equiv \log$.  Similarly letting $\log^\ast(t) = max \{ 1, \log(t) \} $, we define its $j$-th composition $\log_j^\ast(t)$. 

\begin{lemma} \label{L1}
For any $j\in\mathbb{N}$ and $p \geq 1$,  we have for any $x\in\mathbb{R}$ the following relation
\begin{equation}
 \frac{1}{C_j} \log_j (\gamma_j + |x|^p) \leq \log_j (\gamma_j + |x| )  
\end{equation}
with $C_1=p$ and $\gamma_1 =1$, whereas for $j\geq 2$, \\
with  $C_j=2$ and  $\gamma_{j+1} \equiv \max \{ e_{j},  p, \gamma_j \}$ .
\end{lemma}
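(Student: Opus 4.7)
I would proceed by induction on $j$.

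\textit{Base case $j=1$.} The key fact is the elementary inequality $\gamma + u^p \leq (\gamma + u)^p$, valid for any $\gamma \geq 1$, $u \geq 0$, $p \geq 1$; it is a direct consequence of $(a+b)^p \geq a^p + b^p$ (convexity of $t \mapsto t^p$). Applying it with $\gamma = \gamma_1 = 1$, $u = |x|$, and taking $\log$ yields $\log(1+|x|^p) \leq p\log(1+|x|)$, which is the claim for $j=1$ with $C_1 = p$.

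\textit{Inductive step, $j \to j+1$.} Since $\gamma_{j+1} \geq 1$, the same inequality yields $\log(\gamma_{j+1}+|x|^p) \leq p\log(\gamma_{j+1}+|x|)$. Because $\gamma_{j+1} \geq e_j$, the quantity $\log(\gamma_{j+1}+|x|)$ is already large enough that $\log_j$ may be applied to both sides, producing
\[
\log_{j+1}(\gamma_{j+1}+|x|^p) \leq \log_j\bigl(p \log(\gamma_{j+1}+|x|)\bigr).
\]
Writing $w := \log(\gamma_{j+1}+|x|)$, the problem reduces to proving $\log_j(pw) \leq 2\log_j(w)$.

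\textit{Reduction to an auxiliary claim.} The recursion $\gamma_{j+1} \geq \max\{e_j, p\}$ ensures that $w$ exceeds the thresholds one needs; in particular, one can take $w$ large enough so that $pw \leq w^2$, yielding $\log_j(pw) \leq \log_j(w^2) = \log_{j-1}(2\log w)$. What remains is the auxiliary inequality $\log_k(2t) \leq 2\log_k(t)$ for $t$ sufficiently large. This I would establish by a secondary induction on $k$: for $k=1$ the bound is $\log 2 + \log t \leq 2\log t$, i.e.\ $t \geq 2$; for $k \geq 2$ one chains
\[
\log_k(2t) = \log_{k-1}(\log 2 + \log t) \leq \log_{k-1}(2\log t) \leq 2\log_{k-1}(\log t) = 2\log_k(t),
\]
where the first step needs $\log t \geq \log 2$ and the second is the inductive hypothesis applied to $\log t$.

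\textit{Main obstacle.} The genuinely delicate step is the bookkeeping of the size constraints: each application of the auxiliary inequality pushes the constraint one logarithm deeper, so the secondary induction ultimately requires $\log_{k-1}(t) \geq 2$, forcing $t$ to be of order $e_k$. Carrying these constraints back through the main induction, one sees that $\gamma_{j+1} \geq e_j$ is exactly what allows every iterated logarithm in the chain to clear its threshold, while $\gamma_{j+1} \geq p$ plays the analogous role for the initial absorption of the factor $p$. The recursion $\gamma_{j+1} = \max\{e_j, p, \gamma_j\}$ is the clean closed-form synthesis of these telescoped constraints, and verifying that it is indeed strong enough at every level is the only nontrivial part of the argument.
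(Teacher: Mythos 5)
Your strategy---the base algebraic inequality $(\gamma+|x|)^p\ge\gamma+|x|^p$ followed by an induction up the logarithmic tower---matches the paper's, but the inductive engine differs. The paper chains the $j$-level claim: from $2\log_j(\gamma+|x|)\ge\log_j(\gamma+|x|^p)$ it takes one more $\log$ to obtain $\log 2+\log_{j+1}(\gamma+|x|)\ge\log_{j+1}(\gamma+|x|^p)$ and then absorbs the single $\log 2$; the factor $p$ is absorbed only once, at the $j=1\to 2$ transition. You never invoke the $j$-level inductive hypothesis for the main claim; instead you re-derive from the $j=1$ inequality at every level and route the factor $p$ through the detour $pw\le w^2$ plus a separate auxiliary induction proving $\log_k(2t)\le 2\log_k(t)$. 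That auxiliary claim is precisely what the paper's chaining handles implicitly, so your decomposition is parallel in spirit but heavier, because you must re-absorb the full factor $p$ at every level rather than once.

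The concrete gap is in the step you flag as the only nontrivial one and then assert rather than verify: that $\gamma_{j+1}=\max\{e_j,p,\gamma_j\}$ clears every threshold. The absorption $pw\le w^2$ requires $w=\log(\gamma_{j+1}+|x|)\ge p$, i.e.\ $\gamma_{j+1}+|x|\ge e^p$, and $\gamma_{j+1}\ge p$ does not give this when $|x|$ is small, since $p<e^p$ for all $p\ge 1$. Take $p=10$ and $x$ near $0$: $w=\log(10+|x|)\approx 2.3\not\ge 10$, so your reduction $\log(pw)\le 2\log(w)$ already fails at $j=1$. You have correctly located where the delicacy lives, and it is fair to note that the paper's own $j=1\to 2$ step rests on the same unexamined assumption ($\log p\le\log_2(\gamma+|x|)$ for $\gamma\ge\max\{e,p\}$, again equivalent to $\gamma+|x|\ge e^p$), but neither your telescoped-constraints claim nor the stated recursion actually discharges it; closing the gap would require $\gamma_j$ to grow on the order of $\exp_{j-1}(p)$, or a $p$-dependent constant $C_j$.
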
 
\begin{proof}
We prove the lemma by induction. When $j=1$, taking $\gamma_1 = 1$, we have the following elementary inequality $(a+b)^p \geq a + b^p$ for $b>0$ and $a \geq 1$.
With $a= \gamma\geq\gamma_1\equiv 1$ and $b = |x|$, we get 
\begin{equation}
p \log (\gamma + |x| ) \geq \log (\gamma  +|x|^p)
\end{equation} 
i.e. we obtain the desired estimate with $C_1 = p$. 
Hence, applying $\log$ to both sides above, for $\gamma\geq \max\{e,p\}$, we get
\begin{equation}
2\log_2 (\gamma + |x| )\geq \log p + \log_2 (\gamma + |x| ) \geq \log_2 (\gamma  +|x|^p)
\end{equation}
Next suppose for $j\geq 2$ we have 
\begin{equation}
2 \log_j (\gamma + |x| ) \geq  \log_j (\gamma + |x|^p)
\end{equation}
for all $\gamma\geq\gamma_j\equiv \max\{e_{j-1},  p, \gamma_{j-1} \}$.  For $\gamma\geq \gamma_{j+1}\equiv max \{ e_{j} ,  p, \gamma_j \}$, taking $\log$ on both sides, we have 
\begin{equation}
2 \log_{j+1}  (\gamma  + |x|) \geq \log 2 + \log_{j+1}  (\gamma  + |x|)  \geq \log_{j+1} (\gamma  + |x|^p).  
\end{equation}
\end{proof}

\begin{lemma} \label{L2}
For all $j\in\mathbb{N}$, and all $|x| \geq e_{j-1} $, 
\begin{equation}
\log_j (e |x| ) \leq 2 \log_j^\ast(|x|)
\end{equation}
\end{lemma}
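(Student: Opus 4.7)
The plan is to proceed by induction on $j$, separating in each step the regime where $\log_j^\ast(|x|) = \log_j(|x|)$ from the regime where $\log_j^\ast(|x|) = 1$. For the base case $j=1$ with $|x| \geq e_0 = 1$, the identity $\log(e|x|) = 1 + \log|x|$ does the job: if $|x| \geq e$, then $\log^\ast(|x|) = \log(|x|) \geq 1$ and $1 + \log|x| \leq 2\log|x|$; if $1 \leq |x| < e$, then $\log^\ast(|x|) = 1$ while $1 + \log|x| < 2$.

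For the inductive step I assume the conclusion at level $j$ and take $|x| \geq e_j$. When $|x| \geq e_{j+1}$, both $\log_{j+1}(|x|) \geq 1$ and $\log_j(|x|) \geq 1$, so the starred and unstarred iterated logarithms coincide at both levels. The inductive hypothesis (valid since $|x| \geq e_j \geq e_{j-1}$) yields $\log_j(e|x|) \leq 2\log_j(|x|)$, and applying one further logarithm gives $\log_{j+1}(e|x|) \leq \log 2 + \log_{j+1}(|x|) \leq 2\log_{j+1}(|x|)$, where the last step uses $\log_{j+1}(|x|) \geq 1 \geq \log 2$.

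The intermediate regime $e_j \leq |x| < e_{j+1}$ carries the real content. Here $\log_{j+1}^\ast(|x|) = 1$, so the required inequality reduces to the absolute bound $\log_{j+1}(e|x|) \leq 2$. I plan to establish this by a secondary induction on $k$ proving $\log_k(e|x|) < 2\,e_{j+1-k}$ for $k = 1, \ldots, j+1$. The base $k=1$ follows from $e|x| < e \cdot e_{j+1} = \exp(1 + e_j)$, so that $\log(e|x|) < 1 + e_j \leq 2e_j$. The step $k \to k+1$ uses the elementary observation $\log 2 + e_{j-k} \leq 2\,e_{j-k}$, valid whenever $e_{j-k} \geq \log 2$, which holds for all $k \leq j$ because $e_0 = 1 \geq \log 2$. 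Setting $k = j+1$ then produces the desired $\log_{j+1}(e|x|) < 2 e_0 = 2$.

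The main obstacle is this intermediate regime: one must organise the peel-offs so that the additive constant $\log 2$ introduced at each logarithmic step is absorbed by the next $e_{j-k}$ rather than accumulating. The secondary induction on $k$ with the explicit bound $2 e_{j+1-k}$ makes this bookkeeping transparent, and it is really the only nontrivial ingredient, since Case $|x| \geq e_{j+1}$ and the base case are both straightforward applications of $\log(e|x|) = 1 + \log|x|$.
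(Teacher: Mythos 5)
Your proof is correct, and your Case 1 (the regime $|x|\geq e_{j+1}$) is essentially the paper's argument. However, the case split at $|x|=e_{j+1}$ and the entire secondary induction for the intermediate regime $e_j\leq |x|<e_{j+1}$ are unnecessary. The paper handles all $|x|\geq e_j$ uniformly with the same one-line computation: after applying $\log$ to the inductive hypothesis one obtains
\[
\log_{j+1}(e|x|)\ \leq\ \log 2 + \log_{j+1}(|x|)\ \leq\ \log 2 + \log_{j+1}^{\ast}(|x|)\ \leq\ 2\log_{j+1}^{\ast}(|x|),
\]
where the last step only requires $\log 2 \leq \log_{j+1}^{\ast}(|x|)$, and this holds simply because $\log_{j+1}^{\ast}\geq 1 > \log 2$ by definition of $\log^{\ast}$, without any lower bound on $|x|$ beyond $e_j$. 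You instead required $\log_{j+1}(|x|)\geq 1$ to absorb the $\log 2$, which forced the restriction $|x|\geq e_{j+1}$ and pushed you into the secondary induction on $k$ to cover the remaining band. That secondary induction is correct as far as it goes (the bookkeeping $\log_k(e|x|)<2e_{j+1-k}$ is verified carefully, and the domain requirements on $\log_k$ are indeed met since $e|x|>e_j\geq e_{k-1}$), but it amounts to re-deriving from scratch a fact — namely $\log 2 + \log_{j+1}(|x|) < \log 2 + 1 < 2$ when $\log_{j+1}(|x|)<1$ — that drops out of the inductive hypothesis in one step. So the two proofs buy the same result, but the paper's route is shorter because it compares against the starred logarithm (which is bounded below by $1$ unconditionally) rather than the unstarred one.
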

\begin{proof}
If $j=1$, for all $|x|>0$ we have
\begin{equation}
 \log (e |x|) = 1 + \log |x| \leq 2 \log^\ast (|x|)
\end{equation}
Suppose  for a given $j\in\mathbb{N}$ and all $|x|\geq e_{j-1}$, we have
\begin{equation}
\log_j ( e |x| ) \leq  2  \log_j^\ast(|x|)
\end{equation}
For $|x|\geq e_{j} $, taking $\log$ of both sides of the above, we arrive at
\begin{equation}
\begin{aligned}
\log_{j+1}(e|x|) = \log (\log_j (e|x|)) \leq \log (2 \log_j^\ast (|x|) )  \\
\leq  \log 2 + \log_{j+1}^\ast (|x|) \leq 2 \log_{j+1}^\ast (|x|)
\end{aligned}
\end{equation}
\end{proof}

\begin{lemma} \label{L3}

$\exists D_j\in(0,\infty)$ s.t. 
\begin{equation}
\log_j(\gamma_j+|x|) \leq D_j \log_j^\ast(|x|) 
\end{equation}
for all $x\in \mathbb{R}$. 
\end{lemma}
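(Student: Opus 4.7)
My plan is to split the argument into two regimes according to the size of $|x|$: a large-$|x|$ range where Lemma \ref{L2} does all the analytical work, and a bounded range where a crude estimate suffices. I first note, from the recursion $\gamma_{j+1}\equiv\max\{e_j, p, \gamma_j\}$ in Lemma \ref{L1}, that $\gamma_j\geq e_{j-1}$ for all $j\geq 2$ (with $\gamma_1=1$), so $\gamma_j+|x|\geq e_{j-1}$ throughout and the quantity $\log_j(\gamma_j+|x|)$ is well defined and non-negative.

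Next I would introduce the threshold
\[
T_j \equiv \max\Bigl\{e_{j-1},\; \tfrac{\gamma_j}{e-1}\Bigr\}.
\]
For $|x|\geq T_j$ the choice of $T_j$ guarantees both $\gamma_j+|x|\leq e|x|$ (from the second term in the maximum) and $|x|\geq e_{j-1}$ (bringing us inside the hypothesis range of Lemma \ref{L2}). Monotonicity of $\log_j$ together with Lemma \ref{L2} then yields
\[
\log_j(\gamma_j+|x|) \leq \log_j(e|x|) \leq 2\,\log_j^\ast(|x|),
\]
which is the desired bound with constant $2$ in this regime.

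For $|x|<T_j$, the left hand side is dominated by the finite constant $K_j\equiv \log_j(\gamma_j+T_j)$, while by iterative application of $\log^\ast(t)=\max\{1,\log t\}$ we have $\log_j^\ast(|x|)\geq 1$ for every $x\in\mathbb{R}$. Hence $\log_j(\gamma_j+|x|)\leq K_j\log_j^\ast(|x|)$ on this bounded range as well. Setting $D_j\equiv\max\{2,K_j\}$ delivers the uniform bound for all $x\in\mathbb{R}$. There is no serious obstacle once Lemma \ref{L2} is in hand; the only real care needed is in coordinating $T_j$ so that the hypothesis $|x|\geq e_{j-1}$ of Lemma \ref{L2} and the elementary replacement $\gamma_j+|x|\leq e|x|$ hold simultaneously, which is why $T_j$ is taken as the maximum of the two natural candidates.
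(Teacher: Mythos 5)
Your proof is correct and follows essentially the same two-regime strategy as the paper's: apply Lemma \ref{L2} for large $|x|$ and use a crude uniform bound on the compact remainder. The only cosmetic differences are your choice of threshold $T_j$ (the paper simply uses $|x|\geq\gamma_j$ together with $\gamma_j+|x|\leq 2|x|\leq e|x|$) and your explicit constant $K_j$ in place of the paper's appeal to continuity of the ratio on $[-\gamma_j,\gamma_j]$.
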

\begin{proof}
When $|x| \geq \gamma_j  $, by Lemma \ref{L2}, we have
\begin{equation}
\log_j (\gamma_j + |x| ) \leq \log_j (2|x|) \leq 2 \log^\ast_j (|x|)
\end{equation}
For $|x| \leq \gamma_j$, the function 
\begin{equation}
\frac{\log_j (\gamma_j + |x| )}{ \log_j^\ast (|x|)}
\end{equation}
is continuous function in $[-\gamma_j, \gamma_j]$. Hence it can be bounded by some constant $D_j$. 
\end{proof}

\begin{corollary} \label{Cor.1}
Let \[\Phi(t) = t \prod_{j=1}^{n} (\log_j(\gamma_j + |t|))^{p_j} \equiv t \boldsymbol{\Theta}(t),\]  
and 
\[\Phi_{A,p}(t) = |t|^p \prod_{j=1}^{n} (\log_j^\ast (|t|))^{p_j}\equiv |t|^p A(\log^\ast t).\]
 Then there exists $C > 0$ such that 
 \[ \Phi(|x|^p) \leq C \Phi_{A,p}(|x|).\]
 
\end{corollary}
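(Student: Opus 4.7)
The plan is to substitute $t = |x|^p$ into the definition of $\Phi$ and then reduce the inequality to a product of one-factor estimates, each of which follows from Lemma \ref{L1} and Lemma \ref{L3}. Writing out the left-hand side explicitly gives
\[
\Phi(|x|^p) = |x|^p \prod_{j=1}^{n} \bigl(\log_j(\gamma_j + |x|^p)\bigr)^{p_j},
\]
so after cancelling the common factor $|x|^p$ with the leading factor of $\Phi_{A,p}(|x|)$, the claim is equivalent to
\[
\prod_{j=1}^{n} \bigl(\log_j(\gamma_j + |x|^p)\bigr)^{p_j} \leq C \prod_{j=1}^{n} \bigl(\log_j^{\ast}(|x|)\bigr)^{p_j}.
\]

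The next step is to handle each factor in the product separately. First I would apply Lemma \ref{L1} to pull the exponent $p$ out of the inner argument, yielding
\[
\log_j(\gamma_j + |x|^p) \leq C_j \log_j(\gamma_j + |x|),
\]
valid for all $x \in \mathbb{R}$, with $C_1 = p$ and $C_j = 2$ for $j \geq 2$ (and $\gamma_j$ as prescribed there). Then I would chain this with Lemma \ref{L3}, which gives
\[
\log_j(\gamma_j + |x|) \leq D_j \log_j^{\ast}(|x|),
\]
again for all $x \in \mathbb{R}$. Composing the two bounds yields $\log_j(\gamma_j + |x|^p) \leq C_j D_j \log_j^{\ast}(|x|)$ for every $j \in \{1,\ldots,n\}$ and every $x$.

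Finally I would raise each such estimate to the power $p_j \geq 0$ and take the product over $j$, which is legal since both sides are nonnegative, to conclude
\[
\prod_{j=1}^{n} \bigl(\log_j(\gamma_j + |x|^p)\bigr)^{p_j} \leq \left(\prod_{j=1}^{n} (C_j D_j)^{p_j}\right) \prod_{j=1}^{n} \bigl(\log_j^{\ast}(|x|)\bigr)^{p_j}.
\]
Multiplying through by $|x|^p$ and setting $C := \prod_{j=1}^{n} (C_j D_j)^{p_j} \in (0,\infty)$ delivers $\Phi(|x|^p) \leq C \Phi_{A,p}(|x|)$.

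No step is really an obstacle here since the corollary is a bookkeeping consequence of the previous three lemmas; the only mild care is to ensure that the $\gamma_j$ appearing in Lemma \ref{L1} and in Lemma \ref{L3} are compatible, which is already arranged by the inductive definition $\gamma_{j+1} = \max\{e_j, p, \gamma_j\}$ in Lemma \ref{L1}, so the same $\gamma_j$ works in both estimates and the chain of inequalities applies uniformly in $x$.
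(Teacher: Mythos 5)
Your argument is correct and follows essentially the same route as the paper: apply Lemma \ref{L1} to each factor to replace $\log_j(\gamma_j+|x|^p)$ by $C_j\log_j(\gamma_j+|x|)$, chain with Lemma \ref{L3} to reach $D_j\log_j^\ast(|x|)$, raise to the $p_j$-th power and multiply. You were actually slightly more careful than the paper's displayed computation, which writes the final constant as $C=\prod_j C_j D_j$ rather than the correct $\prod_j (C_j D_j)^{p_j}$; this is merely a typo in the paper and does not affect the conclusion.
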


\begin{proof}
Using Lemma \ref{L1}-\ref{L3} above, 
\begin{equation}
\begin{aligned}
\Phi(|x|^p) =&  |x|^p \prod_{j=1}^{n} (\log_j(\gamma_j + |x|^p))^{p_j}  \\
\leq &  |x|^p \prod_{j=1}^{n} (C_j D_j \log_j^\ast (|t|))^{p_j} \\
= & C  |x|^p \prod_{j=1}^{n} (\log_j^\ast (|x|))^{p_j}
\end{aligned}
\end{equation}
where $C= \prod_{j=1}^n C_j D_j$. 
\end{proof}

\begin{lemma} \label{L4}
\[ \| f \|_p^p \leq \Phi^{-1}(1) \|f^p\|_{\Phi}\]
\end{lemma}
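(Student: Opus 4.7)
The plan is to prove this by a single application of Jensen's inequality, exploiting convexity of the Orlicz function $\Phi$ and the definition of the Luxemburg norm. Concretely, I would set $\lambda \equiv \|f^p\|_\Phi$ and work with the defining relation of the norm evaluated at this $\lambda$.

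First, I would recall that by the definition of the Luxemburg norm as an infimum, together with the standard fact that for an Orlicz function (convex, nondecreasing, with $\Phi(0)=0$) the sublevel set $\{\lambda>0: \mu(\Phi(g/\lambda))\le 1\}$ is closed from the left — which here uses either continuity of $\Phi$ and monotone convergence, or the $\Delta_2$ condition assumed elsewhere in the paper — one has
\[
\mu\!\left(\Phi\!\left(\frac{f^p}{\|f^p\|_\Phi}\right)\right) \leq 1.
\]
Then, since $\mu$ is a probability measure and $\Phi$ is convex, Jensen's inequality gives
\[
\Phi\!\left(\mu\!\left(\frac{f^p}{\|f^p\|_\Phi}\right)\right) \leq \mu\!\left(\Phi\!\left(\frac{f^p}{\|f^p\|_\Phi}\right)\right) \leq 1.
\]

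Applying the monotone inverse $\Phi^{-1}$ to both sides yields
\[
\mu\!\left(\frac{f^p}{\|f^p\|_\Phi}\right) \leq \Phi^{-1}(1),
\]
and rearranging, together with the identity $\|f\|_p^p = \mu(|f|^p) = \mu(f^p)$ (understood with $f$ replaced by $|f|$, as standard for $L_p$ norms), produces the claimed bound
\[
\|f\|_p^p \leq \Phi^{-1}(1)\,\|f^p\|_\Phi.
\]

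There is essentially no obstacle here; the only point that deserves a line of comment is the attainment property $\mu\Phi(f^p/\|f^p\|_\Phi)\le 1$ at the infimum value, which is the only place any structural hypothesis on $\Phi$ (continuity or the $\Delta_2$ condition already present in the paper) is used. Everything else is Jensen's inequality applied once.
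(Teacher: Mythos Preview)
Your proof is correct and follows essentially the same route as the paper: apply Jensen's inequality to $\Phi$ at the scale $\lambda=\|f^p\|_\Phi$ and then invert. The only cosmetic difference is that the paper writes $\mu\,\Phi(|f|^p/\|f^p\|_\Phi)=1$ (using the $\Delta_2$ condition) where you use the weaker $\leq 1$, which is enough.
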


\begin{proof}
By Jensen's Inequality, 
\begin{equation}
1 =  \int \Phi\left(\frac{|f|^p}{\|f^p\|_{\Phi}}\right) d\mu  
\geq   \Phi \left(\int \frac{|f|^p}{\|f^p\|_{\Phi}} d\mu \right)  
\end{equation}
Applying monotone function $\Phi^{-1}$ to both sides yields
\[
\|f\|_p^p \leq \Phi^{-1}(1)\|f^p\|_{\Phi} .
\]
\end{proof}

\begin{theorem} \label{Thm_AOineq} 
\label{Thm.9}
Suppose the following Adams Inequality holds 
\begin{equation} \label{AI}
\mu(\Phi_{A,p}(f))\leq \tilde{C}_A\; \mu\left|\nabla^k f\right|^p 
+ \tilde{D}_A\; \Phi_{A,p}(\|f\|_p^p)
\tag{AI}
\end{equation}
with some $\tilde{C}_A,\tilde{D}_A\in(0,\infty)$ independent of $f$.
Then
\begin{equation} \label{AOI}
\|\,\left|f\right|^p\,\|_\Phi\leq C' \mu\left|\nabla^k f\right|^p + D'  \|f\|_p^p .\tag{AOI}
\end{equation}
with some $C',D'\in(0,\infty)$  independent of $f$.
Moreover if the following $(p,k)$-Poincar{\'e} inequality holds
\begin{equation} \label{pk_PI}
\| (f-M_{p,k}(f))\|_p^p\leq c_{p,k}\;  \mu\left|\nabla^k f\right|^p \tag{PI$_{p,k}$}
\end{equation}
with some $c_{p,k}\in(0,\infty)$ for all $f$ for which the right hand side is well defined,
then the following tight $(\Phi,p,k)$-Inequality holds
\begin{equation} \label{OSI}
\|\,\left|f - M_{p,k}(f)\right|^p \,\|_\Phi\leq C\; \mu\left|\nabla^k f\right|^p . \tag{OSI}
\end{equation}
Conversely \ref{OSI} implies $(\Phi,p,k)$-Inequality
with a constant
\[
c_{p,k}=C \Phi^{-1}(1)
\]
\end{theorem}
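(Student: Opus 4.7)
The plan is to establish the three implications in sequence, leaning heavily on the preparatory Lemmas~\ref{L1}--\ref{L4} and Corollary~\ref{Cor.1}.

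For the first implication, (AI)$\Rightarrow$(AOI), I would begin by invoking Corollary~\ref{Cor.1} pointwise to obtain $\Phi(|f|^p)\leq C\,\Phi_{A,p}(|f|)$ for a constant $C>0$ depending only on the two Orlicz functions. Integrating against $\mu$ and applying the hypothesized (AI) yields
\[
\mu\Phi(|f|^p)\leq C\tilde{C}_A\,\mu|\nabla^k f|^p+C\tilde{D}_A\,\Phi_{A,p}(\|f\|_p^p).
\]
To convert this integral estimate into a Luxemburg norm estimate, I would use that $\Phi$ is convex with $\Phi(0)=0$, so $\Phi(t/\lambda)\leq \Phi(t)/\lambda$ for $\lambda\geq 1$; setting $\lambda=\max(1,M)$ with $M$ the right-hand side above forces $\mu\Phi(|f|^p/\lambda)\leq 1$, hence $\|\,|f|^p\,\|_\Phi\leq \max(1,M)$ by the definition of the Luxemburg norm. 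The delicate remaining step, which I expect to be the main obstacle, is to absorb the term $\Phi_{A,p}(\|f\|_p^p)$ into a clean expression of the form $D'\|f\|_p^p$. My plan is to exploit the explicit power-times-iterated-log structure of $\Phi_{A,p}$ together with a rescaling/splitting argument: distinguishing the regimes $\|f\|_p\leq 1$ and $\|f\|_p>1$, or rescaling $f\mapsto f/\|f\|_p$ to reduce the offending term to the constant $\Phi_{A,p}(1)$, so that the iterated-log factor becomes benign and can be absorbed.

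For the second implication, (AI)$+$(PI$_{p,k}$)$\Rightarrow$(OSI), I would apply the already-established (AOI) to the centered function $g\equiv f-M_{p,k}(f)$. Since $M_{p,k}(f)\in\mathcal{P}_{k-1}$ has vanishing $k$-th derivative, $\nabla^k g=\nabla^k f$ pointwise, so (AOI) gives
\[
\|\,|g|^p\,\|_\Phi\leq C'\,\mu|\nabla^k f|^p+D'\,\|g\|_p^p.
\]
Invoking (PI$_{p,k}$) then bounds $\|g\|_p^p=\|f-M_{p,k}(f)\|_p^p\leq c_{p,k}\,\mu|\nabla^k f|^p$, and combining the two displays yields (OSI) with constant $C=C'+D'c_{p,k}$.

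The converse (OSI)$\Rightarrow$(PI$_{p,k}$) is the shortest step: Lemma~\ref{L4} applied to $f-M_{p,k}(f)$ gives
\[
\|f-M_{p,k}(f)\|_p^p\leq \Phi^{-1}(1)\,\|\,|f-M_{p,k}(f)|^p\,\|_\Phi\leq \Phi^{-1}(1)\cdot C\cdot \mu|\nabla^k f|^p,
\]
which is exactly (PI$_{p,k}$) with the advertised constant $c_{p,k}=\Phi^{-1}(1)\,C$.
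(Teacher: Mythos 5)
Your plan follows the same overall route as the paper --- Corollary~\ref{Cor.1} to pass to a $\Phi$-version of (AI), a homogeneity argument to tame the $\Phi_{A,p}(\|f\|_p^p)$ term, application of (AOI) to $f-M_{p,k}(f)$ (using $\nabla^k M_{p,k}(f)=0$), and Lemma~\ref{L4} for the converse --- and all three implications go through. The one genuine variation is the normalization in step one, and there you should commit to the rescaling $g=f/\|f\|_p$: the regime-splitting option does \emph{not} work directly for $\|f\|_p>1$, where $\Phi_{A,p}(\|f\|_p^p)$ grows strictly faster than $\|f\|_p^p$ and can only be controlled after rescaling anyway. With $g=f/\|f\|_p$ the offending term collapses to the constant $\Phi_{A,p}(1)$, your convexity argument gives $\|\,|g|^p\,\|_\Phi\leq\max(1,M_g)\leq 1+C\tilde C_A\,\mu|\nabla^k g|^p+C\tilde D_A\,\Phi_{A,p}(1)$, and multiplying back by $\|f\|_p^p$ (both sides being $p$-homogeneous in $f$) yields (AOI) with $C'=C\tilde C_A$, $D'=1+C\tilde D_A\Phi_{A,p}(1)$. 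The paper normalizes instead by $\|\,|f|^p\,\|_\Phi^{1/p}$, so the left side of the $\Phi$-version equals $1$, and then invokes Lemma~\ref{L4} to bound $\mu|g|^p\leq\Phi^{-1}(1)$ and control the $A(\cdot)$ factor on the right before multiplying back by $\|\,|f|^p\,\|_\Phi$. Both normalizations deliver (AOI) with comparable effort; yours is marginally more self-contained, using only the one-sided implication $\mu\Phi(h/\lambda)\leq 1\Rightarrow\|h\|_\Phi\leq\lambda$ built into the Luxemburg infimum, rather than the equality $\mu\Phi(h/\|h\|_\Phi)=1$, which presupposes $\Delta_2$.
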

\begin{proof} 
By Lemma \ref{L1}-\ref{L3} the (\ref{AI}) implies the following inequality
\[
\mu\left(\Phi(|f|^p)\right) \leq C' \mu\left|\nabla^k f\right|^p + D' \Phi_{A,p}(\|f\|_p^p).
\]
with some constants $C',D'\in(0,\infty)$ independent of the function $f$ for which the right hand side is well defined. Applying this inequality with a function $f/{\||f|^p\|_\Phi^\frac1p}$ for which the right hands side is equal to one, we obtain
\[\begin{split}
1= \mu\left(\Phi(\left|f/{\||f|^p\|_\Phi^\frac1p}\right|^p)\right)\leq C' \mu\left|\nabla^k f/{\||f|^p\|_\Phi^\frac1p} \right|^p  \\
+ D'\mu\left(\left|f/{\||f|^p\|_\Phi^\frac1p}\right|^p\right)A\left(\mu\left(\left|f/{\||f|^p\|_\Phi^\frac1p}\right|^p\right)\right)\\
\leq 
C' \mu\left|\nabla^k f/{\||f|^p\|_\Phi^\frac1p} \right|^p + D' A(\Phi^{-1}(1))\mu |f|^p / \||f|^p\|_\Phi
\end{split}
\]
and hence using Lemma \ref{L4} we arrive at (\ref{AOI}).
Applying (\ref{AOI}) to a function $f-M_{p,k}(f)$ and using the (\ref{pk_PI}), we conclude with the tight inequality (\ref{OSI}).\\
On the other hand, using (\ref{OSI}) together with Lemma 6 we get (\ref{pk_PI}) with the desired  constant.
 
\end{proof}
\begin{remark}
Recall that in a particular case, see \cite{C} and \cite{W}, if \ref{AOI} holds for $p=2$ with the Dirichlet form of a Pearson generator, then the generator has a discrete spectrum. Thus Poincar{\'e} inequality and, by our down hill induction, 
Poincar{\'e} inequality of higher order hold. 
\end{remark}

\label{Section 7}
\section{  Relation of the norms} \label{NormsRelations}
Let $d\mu = e^ {-U} d\lambda$, where $U$ is a smooth function in $\mathbb{R}^n$. The adjoint of the gradient in the corresponding Hilbert space is given by $\nabla^\ast=-\nabla+\nabla U$. Consider the Friedrichs extension of the following Dirichlet operator 
\[
L\equiv \nabla^\ast \cdot \nabla = - \sum_{j=1,..,n}\nabla_j^2 + \sum_{j=1,..,n} \nabla_j (U) \nabla_j\equiv -\Delta + \nabla U \cdot\nabla\]
  defined initially on the dense set of smooth compactly supported functions. 
Then the extension, denoted later on by the same symbol $L$, is a positive, self-adjoint operator for which $L^{\frac{1}{2}} $ is a well-defined, positive and self-adjoint linear operator.
\\
In this section we study equivalence of norms defined in terms of higher order derivatives and powers of the operator $L$, respectively given as follows
with $k\in\mathbb{N}$ and $p\in[1,\infty)$.
\begin{equation}
\| f \|_{L,k,p} \equiv \| f \|_p + \|  L^{\frac{k}{2}} f \|_p
\end{equation}
and 
\begin{equation}
\| f \|_{k,p}^{\sim} \equiv \| f \|_p + \| \nabla^k f\|_p
\end{equation}
which is equivalent under our assumptions to $\| f \|_{k,p}$ via Theorem \ref{Thm_W_kpNorms}.
We begin from discussing Hilbert space norms corresponding to $p=2$. 
Our first task is to show that using the Adams regularity condition on the function $U$, we can show the desired equivalence for $k\leq 3$. This will also serve as an useful introduction of ideas and techniques which will be pushed up later when discussing the general case. In the general case we will require additional regularity assumption on the log of the density function.
\label{Thm10}
\begin{theorem} \label{Thm.NormsRelat.1} 
Let $d\mu=  e^{-U} d \lambda$, with a Adams regular function $U$, for which $|\nabla U|\to_{|x|\to\infty}\infty$.
Then the  norms $\| f \|_{L,k,2}$, $\| f \|_{k,2}$ and $\| f \|_{k,2}^{\sim}$ defined on $W_{k,2}$, $k\leq 3$  are equivalent.
 
\end{theorem}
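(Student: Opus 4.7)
The plan is to treat the three cases $k=1,2,3$ separately, combining the integrated Bochner identity with the Adams Regularity Condition (ARC) and Lemma \ref{AdamsLem.2} to absorb the error terms produced by derivatives of $U$. By density it suffices to prove the estimates for $f\in C_0^\infty(\mathbb{R}^n)$. Since Theorem \ref{Thm_W_kpNorms} already gives the equivalence of $\|\cdot\|_{k,2}$ and $\|\cdot\|_{k,2}^\sim$, the content lies in comparing these with $\|\cdot\|_{L,k,2}$.

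For $k=1$ the self-adjointness of $L$ together with the identity $\int fLf\,d\mu = \int|\nabla f|^2\,d\mu$ yields $\|L^{1/2}f\|_2^2 = \int|\nabla f|^2\,d\mu$ instantly, giving $\|f\|_{L,1,2} = \|f\|_{1,2}^\sim$. For $k=2$ the key input is the integrated Bochner formula
\[
\|Lf\|_2^2 = \|\nabla^2 f\|_2^2 + \int \mathrm{Hess}(U)(\nabla f,\nabla f)\,d\mu,
\]
which follows from $\|Lf\|_2^2 = \int \nabla f\cdot \nabla Lf\,d\mu$ and the commutator identity $\nabla_i Lf = L\nabla_i f + (\mathrm{Hess}\,U\cdot \nabla f)_i$. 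To convert this equality into a two-sided norm comparison I control the Hessian term as follows: (ARC) gives $|\mathrm{Hess}\,U|\le C(1+|\nabla U|)^{2-\varepsilon}$; Young's inequality then yields $(1+|\nabla U|)^{2-\varepsilon}\le \delta(1+|\nabla U|)^{2} + C_\delta$ for any $\delta>0$; and Lemma \ref{AdamsLem.2} with $m=1,\,p=2$ applied componentwise to $\nabla f$ bounds $\int|\nabla f|^2(1+|\nabla U|)^{2}\,d\mu$ by $\|\nabla f\|_2^2 + \|\nabla^2 f\|_2^2$. Choosing $\delta$ small absorbs the $\|\nabla^2 f\|_2^2$ piece, and the spectral interpolation $\|\nabla f\|_2^2 = \|L^{1/2}f\|_2^2 \le \frac{\epsilon}{2}\|Lf\|_2^2 + \frac{1}{2\epsilon}\|f\|_2^2$ takes care of the residual first order term in either direction.

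For $k=3$ I iterate one level up by writing
\[
\|L^{3/2}f\|_2^2 \;=\; \|L^{1/2}(Lf)\|_2^2 \;=\; \int |\nabla Lf|^2\,d\mu,
\]
expanding $\nabla Lf = L\nabla f + \mathrm{Hess}(U)\,\nabla f$ componentwise, and applying the $k=2$ Bochner identity to each component of $\nabla f$ to obtain
\[
\int|L\nabla f|^2\,d\mu = \int|\nabla^3 f|^2\,d\mu + \sum_i\int\mathrm{Hess}(U)(\nabla\partial_i f,\nabla\partial_i f)\,d\mu.
\]
Combined with $|\nabla Lf|^2 \le 2|L\nabla f|^2 + 2|\mathrm{Hess}(U)\nabla f|^2$, the extraction of $\|\nabla^3 f\|_2^2$ reduces to controlling $\int|\mathrm{Hess}\,U|\,|\nabla^2 f|^2\,d\mu$ and $\int|\mathrm{Hess}\,U|^2\,|\nabla f|^2\,d\mu$. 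Under (ARC) these become weighted integrals against $(1+|\nabla U|)^{2-\varepsilon}$ and $(1+|\nabla U|)^{4-2\varepsilon}$; Young converts them to $(1+|\nabla U|)^2$ and $(1+|\nabla U|)^4$ modulo constants; and Lemma \ref{AdamsLem.2} with $m=1$ applied to $\nabla^2 f$, respectively with $m=2$ applied to $\nabla f$, dominates them by $\|\nabla^2 f\|_2^2+\|\nabla^3 f\|_2^2$, respectively by $\sum_{j\le 2}\|\nabla^j f\|_2^2$. Choosing the Young parameter small enough absorbs $\|\nabla^3 f\|_2^2$ into the left hand side, and the $k=1,2$ equivalences together with spectral interpolation $\|L^{s}f\|_2 \le \epsilon\|L^{3/2}f\|_2 + C_\epsilon\|f\|_2$ for $s\in\{1/2,1\}$ clean up the remaining lower order norms. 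The reverse inequality $\|L^{3/2}f\|_2 \lesssim \|f\|_2 + \|\nabla^3 f\|_2$ is the same computation used in the other direction.

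The main obstacle is matching the budget of derivatives of $f$ available to Lemma \ref{AdamsLem.2} against the powers of $(1+|\nabla U|)$ generated by iterating (ARC): one use of (ARC) on $|\mathrm{Hess}\,U|^r$ costs $r(2-\varepsilon)$ weight powers, whereas Lemma \ref{AdamsLem.2} with $m=j$ can absorb $2j$ such powers at the price of $\|f\|_{j,2}$. For $k\le 3$ these budgets balance; beyond $k=3$ derivatives $\nabla^3 U,\nabla^4 U,\ldots$ enter and (ARC) alone becomes insufficient, which justifies the restriction $k\le 3$ and motivates the stronger regularity assumption used in the general case.
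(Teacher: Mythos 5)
Your approach is essentially the one in the paper: for each $k\le 3$ you exploit the commutator structure between $\nabla$ and $L$ (so that the extra terms involve derivatives of $U$), then control the resulting weighted integrals by combining (ARC) with Young's inequality and Lemma \ref{AdamsLem.2}, and absorb the top-order pieces by choosing the Young parameter small. Your clean two-term Bochner identity $\|Lf\|_2^2 = \|\nabla^2 f\|_2^2 + \int \mathrm{Hess}(U)(\nabla f,\nabla f)\,d\mu$ for $k=2$ is the tidy version of what the paper writes (the paper's expansion displays an additional cross term), but the method and ingredients are the same, as is the $k=3$ step of writing $\|L^{3/2}f\|_2^2=\int|\nabla Lf|^2\,d\mu$ and expanding $\nabla Lf = L\nabla f + \mathrm{Hess}(U)\nabla f$.

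One bookkeeping slip worth correcting: applying Lemma \ref{AdamsLem.2} with $m=2$, $p=2$ to $g=\nabla f$ bounds $\int|\nabla f|^2(1+|\nabla U|)^4\,d\mu$ by $\tilde K\|\nabla f\|_{2,2}^2 = \tilde K\big(\|\nabla f\|_2^2+\|\nabla^2 f\|_2^2+\|\nabla^3 f\|_2^2\big)$, not by $\sum_{j\le 2}\|\nabla^j f\|_2^2$ as you state. This does not break the argument, since the small Young parameter $\delta$ multiplying that term still lets you absorb the $\|\nabla^3 f\|_2^2$ contribution alongside the one coming from the $\int|\mathrm{Hess}\,U|\,|\nabla^2 f|^2\,d\mu$ term; but the inventory of derivatives should be stated accurately so that the reader can verify the absorption budget closes.
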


\begin{proof}
Because of Theorem \ref{Thm_W_kpNorms},
it is sufficient to show equivalence of $\| f \|_{L,k,2}$ and $\| f \|_{k,2}^{\sim}$.
For $k=1$ we have equality of norms, because 
\[\|  L^{\frac{1}{2}} f \|_2^2 = \mu\left( f L f\right)= \|\nabla f\|_{2}.\]
For $k=2$ we have
\[ \begin{split}
\|  L f \|_2^2 &= \mu\left( f L^2 f\right)= \sum_{i,j}\mu\left( \nabla_i f   (\nabla_i  \nabla_j^\ast) \nabla_j f\right)\\
&= \sum_{i,j} \mu\left( \nabla_i f   \left(-\nabla_i  \nabla_j + \nabla_i  \nabla_j (U)\right) \nabla_j f\right)  \\
&=\sum_{i,j} \mu\left( \nabla_i f   \left(-\nabla_j +\nabla_j(U) \nabla_i  -\nabla_j(U) \nabla_i + \nabla_i  \nabla_j (U)\right) \nabla_j f\right) \\
&=\sum_{i,j} \mu\left( |\nabla_j \nabla_i f |^2 \right) \\
&- \sum_{i,j} \mu\left( \nabla_j(U) (\nabla_i f) \nabla_i \nabla_j(f) \right)\\
 &+ \sum_{i,j} \mu\left( \left(\nabla_i  \nabla_j (U)\right) \nabla_i (f) \nabla_j(f)\right)
\end{split}
\]
By Cauchy-Schwartz inequality and application of Lemma \ref{AdamsLem.2} and Theorem \ref{Thm_W_kpNorms}, we get
\[ \begin{split}
\left| \sum_{i,j} \mu\left( \nabla_j(U) (\nabla_i f) \nabla_i \nabla_j(f) \right)\right| 
\leq \frac12  \sum_{i,j} \mu\left|\nabla_i \nabla_j(f) \right|^2 + \frac12 \mu\left(|\nabla f|^2(1+|\nabla U|)^2\right) \\
\leq \tilde C_1 \|f\|_{2,2}^2
\end{split}
\]
with some constant $\tilde C_1\in(0,\infty)$ independent of $f$.
By similar arguments based on Cauchy-Schwartz inequality and application of Lemma \ref{AdamsLem.2} and Theorem \ref{Thm_W_kpNorms}, with some constant $\tilde C_2\in(0,\infty)$, we get 
\[
 \begin{split}
\left|\sum_{i,j} \mu\left( \left(\nabla_i  \nabla_j (U)\right) \nabla_i (f) \nabla_j(f)\right)\right|\leq 
\mu\left( |\nabla f|^2 \cdot (1+ |\nabla U|)^{2-\varepsilon }  \right)\leq 
\tilde C_2 \|f\|_{2,2}^2
\end{split}
 \] 
 Hence
\[ \| f \|_{L,1,2}^2 \leq C_2 \|f\|_{2,2}^2.
\]
with some constant $0< C_2\leq 2(C+1)$.
For $k=3/2$, we have
\[ 
\begin{split}
\|  L^{\frac{3}{2}} f \|_2^2 = \mu(|\nabla(Lf)|^2)=\mu(| L\nabla (f)+ [\nabla,L](f)|^2)\\
= \sum_i\mu(| L\nabla_i (f)+ [\nabla_i,L](f)|^2)\\
\leq 2\sum_i\mu(| L\nabla_i (f)|^2) + 2\sum_i\mu(|[\nabla_i,L](f)|^2)
\end{split}
\]
where in the last step we have used the operator convexity of the quadratic function.\
The first on the right hand side can be bounded using the case $k=1$.
For the second term, with some constants $C',C\in(0,\infty)$, we have
\[  \begin{split}
\sum_i\mu(|[\nabla_i,L](f)|^2 = \sum_i\mu(|\sum_j \nabla_i\nabla_j(U)\cdot\nabla_j(f)|^2\\
\leq \mu\left(|\nabla(f)|^2(1+|\nabla u|)^{2-\varepsilon}\right)\leq C'\|f\|_{2,2}^2\leq 
C\|f\|_{3,2}^2
\end{split}
\]
Combining all the above we arrive at
\[ \|  f \|_{L,3,2}^2 \leq C_3\|f\|_{3,2}^2
\]
with some constant $C_3\in(0,\infty)$ independent of $f$.\\
 
Conversely, we have
\[\begin{split}
\mu\left(|\nabla^2 f|^2\right)\equiv \sum_{i,j} \mu\left(|\nabla_i\nabla_j f|^2\right)
=
\sum_{i,j} \mu\left(  \nabla_i f\cdot \nabla_j^\ast \nabla_i \nabla_j f\right)\\
=
\sum_{i,j} \mu\left(  \nabla_i^\ast\nabla_i (f)\cdot \nabla_j^\ast  \nabla_j f\right)+
\sum_{i,j} \mu\left(  \nabla_i (f)\cdot [\nabla_j^\ast, \nabla_i] \nabla_j f\right)\\
= \|f\|_{L,1,2}^2 - \sum_{i,j} \mu\left(  \nabla_i (f)\cdot \nabla_j \nabla_i(U)\cdot \nabla_j (f)\right)\\
\leq 
\|f\|_{L,1,2}^2  +  C\mu\left(  |\nabla(f)|^2 (1 + |\nabla U|)^{2-\varepsilon}  \right)
\\
\leq 
\|f\|_{L,1,2}^2  +  C\left( \mu\left(  |\nabla(f)|^2\right) \right)^\varepsilon   \left( \mu\left(  |\nabla(f)|^2 (1 + |\nabla U|)^2  \right)\right)^{\frac{2-\varepsilon}{2}}  \\
\leq \|f\|_{L,1,2}^2  +  \varepsilon C^{1/\varepsilon}K^{\frac{2-\varepsilon}{2}} \mu\left(  |\nabla(f)|^2 \right)+ \frac{2-\varepsilon}{2} \|f\|_{2,2}^2 
\end{split}
\]
Hence 
\[ \begin{split}
\mu\left(|\nabla^2 f|^2\right) \leq \frac{2}{\varepsilon}\|f\|_{L,1,2}^2  + 2 C^{1/\varepsilon}K^{\frac{2-\varepsilon}{2}} \mu\left(  |\nabla(f)|^2 \right)+  \frac{2-\varepsilon}{\varepsilon} \|f\|_2^2 
\\
\leq \left(\frac{2}{\varepsilon}  +   C^{1/\varepsilon}K^{\frac{2-\varepsilon}{2}}\right) \|f\|_{L,1,2}^2+  \frac{2-\varepsilon}{\varepsilon} \|f\|_2^2 
\\
\leq \left(  C^{1/\varepsilon}K^{\frac{2-\varepsilon}{2}} + \frac{4-\varepsilon}{\varepsilon}\right) \|f\|_{L,1,2}^2
\end{split}
\]
For the higher norm 
\[\begin{split}
\mu\left(|\nabla^3 f|^2\right)\equiv \sum_{i,j,k} \mu\left(|\nabla_i\nabla_j \nabla_k f|^2\right)
=
\sum_{i,j,k} \mu\left(  \nabla_i \nabla_j f\cdot \nabla_k^\ast \nabla_k \nabla_i \nabla_j f\right)\\
=
\sum_{i,j,k} \mu\left(  \nabla_i^\ast\nabla_i \nabla_j (f)\cdot \nabla_k^\ast \nabla_k \nabla_j f\right)+
\sum_{i,j,k} \mu\left(  \nabla_i\nabla_j (f)\cdot [\nabla_k^\ast, \nabla_i] \nabla_j\nabla_k f\right)\\
\leq
  \mu\left(  L \nabla  (f)\cdot L \nabla  (f)\right)+
+  
\mu\left(  \sum_{i,k}  |\nabla_k\nabla_i(U)|  |\nabla^2 f|^2\right)
\\
\leq
  \mu\left(  L \nabla  (f)\cdot L \nabla  (f)\right)
+  
C \mu\left(|\nabla^2 f|^2\left(1+|\nabla U|\right)^{2-\varepsilon} \right)
\\
\leq
  \mu\left(  L \nabla  (f)\cdot L \nabla  (f) \right) 
+  
  C\left( \mu\left(  |\nabla^2(f)|^2\right) \right)^\varepsilon   \left( \mu\left(  |\nabla^2(f)|^2 (1 + |\nabla U|)^2  \right)\right)^{\frac{2-\varepsilon}{2}}  \\
\leq  \mu\left(  L \nabla  (f)\cdot L \nabla  f\right) +
  \varepsilon C^{1/\varepsilon} K^{\frac{2-\varepsilon}{2}}\mu\left(  |\nabla^2(f)|^2 \right)+ \frac{2-\varepsilon}{2} \|f\|_{3,2}^2 
\end{split}
\] 
Hence we obtain 
\[\begin{split}
\mu\left(|\nabla^3 f|^2\right)  \leq  \frac2{\varepsilon} \mu \left(  L \nabla  (f)\cdot L \nabla  f\right) +
  2 C^{1/\varepsilon} K^{\frac{2-\varepsilon}{2}}\mu\left(  |\nabla^2(f)|^2 \right)+ \frac{2-\varepsilon}{\varepsilon} \|f\|_2^2 
\end{split}
\] 
For the first term on the right hand side, we have
\begin{equation} \label{est_32}
\begin{split}
\mu \left(  L \nabla  (f)\cdot L \nabla  f\right) =
\mu\left(  f L^3 (f)\right) +
2 \sum_{j} \mu\left(  [L, \nabla_j ](f)\cdot  \nabla_j L  f\right)
\\
\qquad +  \sum_{j} \mu\left(  [L, \nabla_j ](f)\cdot   [L, \nabla_j ](f) \right)
\end{split}
\end{equation} 
The middle one in (\ref{est_32}) can be treated as follows
\[ \begin{split}
\sum_{j} \mu\left(  [L, \nabla_j ](f)\cdot  \nabla_j L  f\right)= 
-\sum_{j,k} \mu\left(   \nabla_j\nabla_k(U) \nabla_k(f)\cdot  \nabla_j L  f\right) \\
\leq  \mu\left(  \sum_{j,k} |\nabla_j\nabla_k(U)|\cdot |\nabla f|\cdot  |\nabla  L  f|\right)\\
\leq  \mu\left(  \left(1+|\nabla U |\right)^{2-\varepsilon}\cdot |\nabla f|\cdot  |\nabla  L  f|\right)\\
\frac12\mu(fL^3 f)+
C^2\mu\left( |\nabla f|^2 \left(1+|\nabla U |\right)^{(2-\varepsilon)2} \right)
\\
\leq \frac12\mu(fL^3 f)+
  C^2\left(\mu\left( |\nabla f|^2\right)\right)^\varepsilon \left(\mu\left( |\nabla f|^2 \left(1+|\nabla U |\right)^{2\cdot 2} \right)\right)^{\frac{2-\varepsilon}2} \\
  \leq 
 \frac12\mu(fL^3 f)+  C^2\left(\mu\left( |\nabla f|^2\right)\right)^\varepsilon 
 \left( D \|f\|_{3,2}^2 \right)^{\frac{2-\varepsilon}2}\\
 \leq \frac12\mu(fL^3 f)+  \varepsilon C^{2/\varepsilon}(2D)^{\frac{2-\varepsilon}2}
 \left(\mu |\nabla f|^2\right) +
\frac12 {\frac{2-\varepsilon}2} \|f\|_{3,2}^2  
\end{split}
\]
with some $C,D\in(0,\infty)$ independent of $f$.
 Finally the last one in (\ref{est_32}) can be treated as follows
 \[ \begin{split}
\sum_{j} \mu\left(  [L, \nabla_j ](f)\cdot   [L, \nabla_j ](f) \right) =
\sum_{j} \mu\left(   \sum_{k,l}\nabla_k\nabla_j (U) \nabla_l\nabla_j (U) \cdot   \nabla_k f     \nabla_l f   \right) \\
\leq \mu\left(   \left(1+|\nabla U|\right)^{(2-\varepsilon)2}       |\nabla f |^2  \right)\\
\leq \left( \mu   |\nabla f |^2  \right)^\varepsilon \left(\mu\left(   \left(1+|\nabla U|\right)^{4}       |\nabla f |^2  \right)\right)^{\frac{2-\varepsilon}{2}}\\
\leq \left( \mu   |\nabla f |^2  \right)^\varepsilon \left(D \|f\|_{3,2}^2\right)^{\frac{2-\varepsilon}{2}} \\
\varepsilon( D)^{\frac{2-\varepsilon}{2}}   \mu   |\nabla f |^2 + {\frac{2-\varepsilon}{2}} \|f\|_{3,2}^2
\end{split}
 \]
 with some constant $D\in(0,\infty)$ independent of $f$.
 Combining the above inequalities we arrive at the desired bound
 \[
 \mu\left(|\nabla^3 f|^2\right) \leq D' \|f\|_{L,3,2}^2
 \]
with some constant $D'\in(0,\infty)$ independent of $f$.\\
\end{proof}

\bigskip

\noindent To consider the more general case we introduce the following assumption on derivatives of $U$ of  higher order up to an order $3\leq m\in\mathbb{N}$.\\

\label{Assumption A_m}
\noindent \textbf{Assumption $($A$_m)$} : \label{RA2}
\\
There exists a constant $K\in(0,\infty)$ such that
\begin{equation} 
\begin{aligned}
\exists K\in(0,\infty)\qquad  & | \nabla^2 U | \leq& K ( 1+ |\nabla U|)^{2-\epsilon}  \qquad \qquad \qquad \\
 \forall 3\leq k \leq m \,  \exists K_k\in(0,\infty)\qquad  & | \nabla^k U | \leq& K_k ( 1+ |\nabla U|)^{k} \text{ , } \qquad \qquad \qquad   \label{e_condition}
\end{aligned}
\end{equation}

\begin{remark}
The assumption \textbf{$($A$_m$ $)$}
includes a vast family of functions $U$.  In particular it includes semibounded polynomials and any $\Phi_j$, $j\in\mathbb{Z}^+$ classes of \textrm{\cite{A}} for which it holds for any $m\in\mathbb{N}$.\\

\end{remark}
\label{Thm11} %
\begin{theorem} \label{EquivThm2}
Suppose the assumption \textbf{$($A$_{\bar m})$} is satisfied and $|\nabla U|\to_{|x|\to\infty}\infty$. Then for any $k\leq \bar m$ 
the norms 
$\| f \|_{L,k,2}$, $\| f \|_{k,2}$ and  $\| f \|_{k,2}^{\sim}$ are equivalent.

\end{theorem}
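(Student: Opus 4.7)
My plan is to induct on $k$, promoting the argument of Theorem \ref{Thm.NormsRelat.1} from $k\leq 3$ to general $k\leq \bar m$ by exploiting the stronger assumption $(A_{\bar m})$. Since Theorem \ref{Thm_W_kpNorms} already gives $\|f\|_{k,2}\sim \|f\|_{k,2}^{\sim}$, it suffices to prove $\|f\|_{L,k,2}\sim \|f\|_{k,2}^{\sim}$. The base cases $k\in\{1,2,3\}$ are exactly Theorem \ref{Thm.NormsRelat.1}. I handle the two directions separately.

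For the upper bound $\|L^{k/2}f\|_2 \leq C_k\|f\|_{k,2}^{\sim}$, I reduce to integer powers of $L$: $\|L^{k/2}f\|_2=\|L^m f\|_2$ if $k=2m$, and $\|L^{k/2}f\|_2^2=\mu(|\nabla L^m f|^2)$ if $k=2m+1$. The heart of the argument is a structural identity for $L^m f$, proved by a separate induction on $m$ starting from $L=-\Delta+\nabla U\cdot\nabla$:
\[
L^m f \;=\; \sum_{|\alpha|\leq 2m}\;\sum_{(\beta_1,\ldots,\beta_l)} c_{\alpha,\beta}\,(\nabla^{\beta_1}U)\cdots(\nabla^{\beta_l}U)\,\nabla^\alpha f,
\]
where every nonzero summand has weight $|\alpha|+\sum_i|\beta_i|=2m$ with each $|\beta_i|\geq 1$. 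This weight is preserved by the recursion $L^{m+1}=L\circ L^m$, because both $-\Delta$ and $\nabla U\cdot\nabla$ increase it by exactly two (the latter contributing one new $\nabla U$ factor and one extra derivative falling either on $f$ or on an existing $\nabla^{\beta_i}U$). Under $(A_{\bar m})$ each such product is dominated pointwise by $C_m(1+|\nabla U|)^{2m-|\alpha|}$, hence
\[
\|L^m f\|_2^2 \;\leq\; C_m\sum_{|\alpha|\leq 2m}\mu\bigl(|\nabla^\alpha f|^2(1+|\nabla U|)^{2(2m-|\alpha|)}\bigr) \;\leq\; C_m'\,\|f\|_{2m,2}^2
\]
by Lemma \ref{AdamsLem.2} applied to $g=\nabla^\alpha f$ with exponent $m'=2m-|\alpha|$. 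The odd case follows after a completely analogous expansion of $\nabla L^m f$.

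For the lower bound $\sum_{|\alpha|=k}\|\nabla^\alpha f\|_2 \leq C_k\|f\|_{L,k,2}$, I iterate the integration-by-parts and commutator manipulations from the $k=2,3$ proofs. For $|\alpha|=k$, $\mu(|\nabla^\alpha f|^2)$ can be rewritten, after repeatedly moving $\nabla_j$ across via $\nabla_j^{*}=-\nabla_j+\nabla_j U$ and using the pair-commutator $[\nabla_j^{*},\nabla_i]$ proportional to $\nabla_i\nabla_j U$, as a principal term $\|L^{k/2}f\|_2^2$ plus remainders of the form $\mu(\nabla^{\alpha_1}f\cdot\nabla^{\alpha_2}f\cdot\prod_i\nabla^{\beta_i}U)$ of total weight $2k$ and with $\min(|\alpha_1|,|\alpha_2|)<k$. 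By Cauchy--Schwarz and the same H\"older-plus-Young interpolation used for $k=3$ in Theorem \ref{Thm.NormsRelat.1}, these remainders are dominated by integrals of the form $\mu(|\nabla^{\alpha_j}f|^2(1+|\nabla U|)^{2l})$ with $|\alpha_j|+l<k$, which Lemma \ref{AdamsLem.2} bounds by $C\|f\|_{k,2}^2$. Splitting against $\|f\|_{L,k-1,2}^2$ via the induction hypothesis and absorbing a small constant times $\|\nabla^k f\|_2^2$ on the left then closes the estimate.

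The principal technical obstacle is the combinatorial bookkeeping: one must verify that the weight identity $|\alpha|+\sum_i|\beta_i|=2m$ is preserved at every step of the recursion for $L^m$, and that every commutator remainder produced in the lower-bound argument retains a total weight controlled by $k$, so that assumption $(A_{\bar m})$ and Lemma \ref{AdamsLem.2} always apply cleanly. Once this bookkeeping is firmly in place, the remaining estimates reduce to the same Cauchy--Schwarz/H\"older/Young interpolations already used in Theorem \ref{Thm.NormsRelat.1}, with the induction hypothesis closing the argument routinely.
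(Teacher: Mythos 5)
Your upper bound direction ($\|L^{k/2}f\|_2\lesssim\|f\|_{k,2}$) is sound: the direct structural expansion of $L^m f$ with the weight identity $|\alpha|+\sum_i|\beta_i|=2m$ is valid, and each term is controlled by Lemma \ref{AdamsLem.2} applied to $g=\nabla^\alpha f$. This is a more explicit route than the paper's step-of-two induction reducing $\|L^{m/2}f\|_2$ to $\|L^{(m-2)/2}(Lf)\|_2$ together with the commutator Lemma \ref{CommutatorLemma}, but the substance is the same. (In fact the paper's proof of Theorem \ref{EquivThm2} establishes \emph{only} this direction, deferring the converse to Theorem \ref{Thm 13} after first developing Theorem \ref{Revised Adam's Inequality}.)

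The lower bound direction has a genuine gap. The claim that the Cauchy--Schwarz remainders reduce to $\mu(|\nabla^{\alpha_j}f|^2(1+|\nabla U|)^{2l})$ with $|\alpha_j|+l<k$ is false: if the total weight is $2k$ with $|\alpha_1|+|\alpha_2|+\sum|\beta_i|=2k$ and $|\alpha_2|\le k$, then the Cauchy--Schwarz factor carrying $\alpha_1$ has $|\alpha_1|+\sum|\beta_i|=2k-|\alpha_2|\ge k$. For instance the commutator term $\|\nabla^{j}U\cdot\nabla^{k-j}f\|_2^2$ with $3\le j\le k-1$ is bounded, via $(A_{\bar m})$, only by $K\,\mu\bigl(|\nabla^{k-j}f|^2(1+|\nabla U|)^{2j}\bigr)$, and Lemma \ref{AdamsLem.2} then gives $\tilde K\,\|f\|_{k,2}^2$ with an \emph{uncontrolled} constant $\tilde K$ in front of $\|\nabla^k f\|_2^2$ — nothing here is small, so no absorption is possible. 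The H\"older-plus-Young device you cite from the $k=3$ proof works precisely because $(\mathrm{ARC})$ gives $|\nabla^2 U|\le C(1+|\nabla U|)^{2-\varepsilon}$ with an exponent deficit $\varepsilon>0$; but $(A_{\bar m})$ for $j\ge 3$ reads $|\nabla^j U|\le K_j(1+|\nabla U|)^{j}$ with \emph{no} such deficit, so the mechanism breaks exactly at the point where the induction leaves $k=3$. What is missing is the quantitative ``small $\epsilon$'' version of Adams' bound — $\int|\nabla^j U|^2|g|^2\,d\mu\le \epsilon\|\nabla^j g\|_2^2 + K_\epsilon\|g\|_2^2$ for arbitrary $\epsilon>0$ — which is precisely the content of Theorem \ref{Revised Adam's Inequality}, proved via Lemma \ref{Lemma 8} and the observation that $j<2^{j-1}$ for $j\ge 3$ leaves hidden slack. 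Without that input the absorption in your last sentence does not close.
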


\begin{proof}
By Adam's Lemma \ref{AdamsLem.2}, the condition \textbf{$($A$_{\bar m})$} implies
\begin{equation}
\int |\nabla^{k} U |^2 f^2 d\mu \leq K \| f \|_{k,2}^2  \label{e_Adams}
\end{equation}
When $m=1,2$, the proof has been done above, see Theorem \ref{Thm.NormsRelat.1} . Assume now it holds for all $m'$ such that $3 \leq m' \leq m-1$. 
By inductive assumption, Leibnitz rule and \eqref{e_Adams}, we have
\begin{equation} \label{Ethm.e1}
\begin{aligned}
\| L^{\frac{m}{2}}f \|_2 =& \| L^{\frac{m-2}{2}}(Lf) \|_2 \\
\leq& C \left( \| Lf \|_2 + \| \nabla^{m-2} (Lf) \|_2 \right)\\
\leq& C_1 ( \| f \|_2 + \| \nabla^2 f \|_2) + \| \nabla^{m-2} (-\nabla^2 f + \nabla U \nabla f) \|_2 \\
\leq& C_1( \| f \|_2 + \| \nabla^2 f \|_2) + \| \nabla^m f \|_2 + \| \nabla^{m-2}(\nabla U \nabla f) \|_2\\
\end{aligned}
\end{equation}
with some constants $C,C_1\in(0,\infty)$ independent of $f$.
For the last term on the right hand side we have
\begin{equation} \label{Ethm.e2}
\begin{aligned}
\| \nabla^{m-2}(\nabla U \nabla f) \|_2^2 \equiv \sum_{j_1,..,j_{m-2}}\| \nabla^{m-2}_{j_1,..,j_{m-2}}(\sum_l \nabla_l U \nabla_l f) \|_2^2 \\
= \sum_{j_1,..,j_{m-2}}\|  \sum_l \left( [\nabla^{m-2}_{j_1,..,j_{m-2}} ,\nabla_l U] \nabla_l f  +     \nabla_l U  \nabla^{m-2}_{j_1,..,j_{m-2}}\nabla_l f \right)\|_2^2\\
\leq 2 \sum_{j_1,..,j_{m-2}}\|  \sum_l \left( [\nabla^{m-2}_{j_1,..,j_{m-2}} ,\nabla_l U] \right) \nabla_l f   \|_2^2 \\
+2 \sum_{j_1,..,j_{m-2}}\|  \sum_l       \nabla_l U  \nabla^{m-2}_{j_1,..,j_{m-2}}\nabla_l f \|_2^2.
\end{aligned}
\end{equation}
For the last term on the right hand side of (\ref{Ethm.e2}) we use Adams' lemma to get
\begin{equation} \label{Ethm.e3}
2 \sum_{j_1,..,j_{m-2}}\|  \sum_l       \nabla_l U  \nabla^{m-2}_{j_1,..,j_{m-2}}\nabla_l f \|_2^2 \leq C_3 \|f \|_{m,2}^2
\end{equation}
with some constant $C_3\in(0,\infty)$ independent of $f$.
For the first term on the right hand side of (\ref{Ethm.e2})  we use the following formula for the commutator which can be proven inductively.
\begin{lemma} \label{CommutatorLemma}
\begin{equation} \label{Ethm.e4}
 [\nabla^{m-2}_{j_1,..,j_{m-2}} ,F] = \sum_{s=1}^{m-2}  \sum_{\mathbf{j}_s, \mathbf{i}_{m-2-s}} \left(\nabla^{s}_{\mathbf{j}_s}  F \right) \; \nabla^{m-2-s}_{\mathbf{i}_{m-2-s}}
\end{equation}
where
$\mathbf{j}_s\equiv \{j_{r_1},..,j_{r_s}\}$ and $\mathbf{i}_{m-2-s} \{j_{r_{s+1}},..,j_{r_{m-2}}\}$ are partitions of $j_1,..,j_{m-2}$,
of cardinality $s$ and $m-2-s$, respectively.
\end{lemma}

Applying this lemma with $F=\nabla_lU$, we have
\begin{equation} \label{Ethm.e4}
 [\nabla^{m-2}_{j_1,..,j_{m-2}} ,\nabla_l U] = \sum_{s=1}^{m-2}  \sum_{\mathbf{j}_s, \mathbf{i}_{m-2-s}} \left(\nabla^{s}_{\mathbf{j}_s}  \nabla_l U \right) \; \nabla^{m-2-s}_{\mathbf{i}_{m-2-s}}
\end{equation}
with corresponding partitions
$\mathbf{j}_s\equiv \{j_{r_1},..,j_{r_s}\}$ and $\mathbf{i}_{m-2-s} \{j_{r_{s+1}},..,j_{r_{m-2}}\}$ of $j_1,..,j_{m-2}$,
of cardinality $s$ and $m-2-s$, respectively.
We get
\begin{equation} \label{Ethm.e5}
\begin{aligned}
& 2 \sum_{j_1,..,j_{m-2}}\|  \sum_l \left( [\nabla^{m-2}_{j_1,..,j_{m-2}} ,\nabla_l U] \right) \nabla_l f   \|_2^2 \\
&=
 2 \sum_{j_1,..,j_{m-2}} \|  \sum_l \sum_{s=1}^{m-2}  \sum_{\mathbf{j}_s, \mathbf{i}_{m-2-s}} \left(\nabla^{s}_{\mathbf{j}_s}  \nabla_l U \right) \; \nabla^{m-2-s}_{\mathbf{i}_{m-2-s}} \nabla_l f   \|_2^2 \\
& \leq  2 (m-2)\sum_{s=1}^{m-2}\sum_{s=1}^{m-2} \sum_{j_1,..,j_{m-2}} \|  \sum_l   \sum_{\mathbf{j}_s, \mathbf{i}_{m-2-s}} \left(\nabla^{s}_{\mathbf{j}_s}  \nabla_l U \right) \; \nabla^{m-2-s}_{\mathbf{i}_{m-2-s}} \nabla_l f   \|_2^2 \\
& \leq  2 (m-2) \sum_{s=1}^{m-2}\sum_{s=1}^{m-2} \sum_{j_1,..,j_{m-2}}
 \sum_{l,l'}   \sum_{\mathbf{j}_s, \mathbf{i}_{m-2-s}, \mathbf{j}_s', \mathbf{i}_{m-2-s}'}
\\
&
\int  \left( \left(\nabla^{s}_{\mathbf{j}_s}  \nabla_l U \right) \; \nabla^{m-2-s}_{\mathbf{i}_{m-2-s}} \nabla_l f \right)
\left(  \left(\nabla^{s}_{\mathbf{j}_s'}  \nabla_l U \right) \; \nabla^{m-2-s}_{\mathbf{i}_{m-2-s}'} \nabla_{l'} f\right)d\mu 
\end{aligned}
\end{equation}
Hence
\begin{equation} \label{Ethm.e5'}
\begin{aligned}
& 2 \sum_{j_1,..,j_{m-2}}\|  \sum_l \left( [\nabla^{m-2}_{j_1,..,j_{m-2}} ,\nabla_l U] \right) \nabla_l f   \|_2^2 \\
& \leq  2 (m-2) \sum_{s=1}^{m-2}\sum_{s=1}^{m-2} \sum_{j_1,..,j_{m-2}}
 \sum_{l,l'}   \sum_{\mathbf{j}_s, \mathbf{i}_{m-2-s}, \mathbf{j}_s', \mathbf{i}_{m-2-s}'}
\\
&
\left(\int  \left| \left(\nabla^{s}_{\mathbf{j}_s}  \nabla_l U \right) \; \nabla^{m-2-s}_{\mathbf{i}_{m-2-s}} \nabla_l f \right|^2
d\mu\right)^\frac12
\left(\int  \left|  \left(\nabla^{s}_{\mathbf{j}_s'}  \nabla_l U \right) \; \nabla^{m-2-s}_{\mathbf{i}_{m-2-s}'} \nabla_{l'} f\right|^2d\mu\right)^\frac12
\end{aligned}
\end{equation}
Since by our assumption about the derivatives of $U$ in the condition \textbf{$($A$_{\bar m}$ $)$} and Adams' lemma \ref{AdamsLem.2}, we have
\begin{equation} \label{Ethm.e5'}
\begin{aligned}
 \int  \left| \left(\nabla^{s}_{\mathbf{j}_s}  \nabla_l U \right) \; \nabla^{m-2-s}_{\mathbf{i}_{m-2-s}} \nabla_l f \right|^2
d\mu  \leq
K \|\nabla^{m-2-s}_{\mathbf{i}_{m-2-s}} \nabla_l f\|_{s+1}^2
 \\
 \int  \left|  \left(\nabla^{s}_{\mathbf{j}_s'}  \nabla_l U \right) \; \nabla^{m-2-s}_{\mathbf{i}_{m-2-s}'} \nabla_{l'} f\right|^2d\mu \leq
K \|\nabla^{m-2-s}_{\mathbf{i}_{m-2-s}'} \nabla_{l'} f\|_{s+1}^2
\end{aligned}
\end{equation}
we conclude that with some constant $C'\in(0,\infty)$, we have
\begin{equation} \label{Ethm.e6}
 2 \sum_{j_1,..,j_{m-2}}\|  \sum_l \left( [\nabla^{m-2}_{j_1,..,j_{m-2}} ,\nabla_l U] \right) \nabla_l f   \|_2^2 \\
\leq 
C' \|f\|_{m,2}^2 
\end{equation}
Combining (\ref{Ethm.e2})-(\ref{Ethm.e6}) and using Theorem \ref{Thm_W_kpNorms}  we conclude  that
\[
\|L^{\frac{m}{2}}f\|_2 \leq C" \|f\|_{m,2}
\]
with some constant $C"\in(0,\infty)$ independent of $f$.
This ends the proof of the inductive step and implies the first part of the theorem. \label{1stPartQED}
 
\end{proof}

 \label{2ndPartQED}

\noindent In order to proceed further we will need to have a control on constants in the Adam's bounds as follows.

\label{Thm12} 
\begin{theorem}  \label{Revised Adam's Inequality}
Let $m \geq 2$ be an integer.
Suppose the assumption \textbf{$($A$_m)$} holds.
Then $\exists \gamma >0$, $\forall p \in [2- \gamma, \infty)$, $\forall \epsilon>0$, $\exists K>0$, $\forall f \in W_{k,p}(\mu)$, 
\begin{equation}
\int |\nabla^k U |^p |f|^p d\mu \leq \epsilon \| \nabla^k f \|_p^p + K \| f \|_p^p
\end{equation}
for all $k\leq m$.
\end{theorem}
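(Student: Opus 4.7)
The plan is to bound $\int |\nabla^k U|^p f^p\,d\mu$ inductively on $k$ from $2$ up to $m$, combining the pointwise estimates of \textbf{(A$_m$)} with Adams' Lemma \ref{AdamsLem.2} and the norm equivalence of Theorem \ref{Thm_W_kpNorms}. For the base case $k=2$, the decisive feature is the $(2-\varepsilon)$-improvement in \textbf{(A$_m$)}: since $|\nabla^2 U|^p \leq K^p(1+|\nabla U|)^{(2-\varepsilon)p}$ with $(2-\varepsilon)p<2p$, Young's inequality on the exponent gives $(1+|\nabla U|)^{(2-\varepsilon)p}\leq\delta(1+|\nabla U|)^{2p}+C_\delta$ for any $\delta>0$. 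Applying Adams' Lemma with $m=2$ together with Theorem \ref{Thm_W_kpNorms}, I obtain $\int(1+|\nabla U|)^{2p} f^p\, d\mu \leq \tilde K C(\|\nabla^2 f\|_p^p+\|f\|_p^p)$, and choosing $\delta=\epsilon/(K^p\tilde K C)$ yields the desired bound.

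For $3\leq k \leq m$, the condition \textbf{(A$_m$)} only provides $|\nabla^k U|^p\leq K_k^p(1+|\nabla U|)^{kp}$ without any $\varepsilon$-improvement, so a direct application of Adams' Lemma would yield only a fixed (non-tunable) coefficient in front of $\|\nabla^k f\|_p^p$. To obtain the arbitrarily small $\epsilon$, I would integrate by parts: for $|\alpha|=k$, writing $\alpha=\beta+e_i$ with $|\beta|=k-1$,
\[
\int (\partial^\alpha U)^p f^p\,d\mu = -\int (\partial^\beta U)\,\partial_i\bigl[(\partial^\alpha U)^{p-1} f^p\bigr]\,d\mu + \int (\partial^\beta U)(\partial^\alpha U)^{p-1} f^p (\partial_i U)\,d\mu.
\]
Expanding the derivative and iterating the IBP $k-2$ times, every resulting term eventually contains a $\nabla^2 U$ factor carrying the $(2-\varepsilon)$ savings of \textbf{(A$_m$)}. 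The remaining $\nabla^j U$ factors are bounded pointwise via \textbf{(A$_m$)}, the $\nabla f$-type factors are handled by Young's inequality, and the final integration is performed using Adams' Lemma together with Theorem \ref{Thm_W_kpNorms} to reduce to $\|\nabla^k f\|_p^p + \|f\|_p^p$; the $(2-\varepsilon)$ savings then produce the tunable $\epsilon$.

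The \textbf{main obstacle} is the case $k=m$, since the IBP procedure naively produces a $(m+1)$-th derivative of $U$, which is not covered by \textbf{(A$_m$)}. I would resolve this by performing the IBP so that the extra derivative falls on $f$ rather than on $U$, reducing the expression to lower-order $U$-derivatives (handled by the inductive hypothesis) and higher-order $f$-derivatives (controlled via Young's inequality and Theorem \ref{Thm_W_kpNorms}). The restriction $p\in[2-\gamma,\infty)$ for some $\gamma>0$ ensures that the Young conjugate $q=p/(p-1)$ stays bounded, so that all the Young-type manipulations in the IBP expansion close cleanly; no such restriction is needed for the base case $k=2$, where the direct approach suffices.
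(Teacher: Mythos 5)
Your base case $k=2$ is essentially correct and matches the paper: the $(2-\varepsilon)$-gain in \textbf{(A$_m$)} plus Young's inequality on the exponent plus Adams' Lemma \ref{AdamsLem.2} and Theorem \ref{Thm_W_kpNorms} give the tunable $\epsilon$. For $k\geq 3$, however, you have missed the paper's key idea, and your proposed substitute has a genuine gap.

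The paper does \emph{not} appeal to Lemma \ref{AdamsLem.2} (which yields the power $(1+|\nabla U|)^{kp}$ and, as you correctly note, gives only a non-tunable constant in front of $\|\nabla^k f\|_p^p$). Instead it first proves the sharpened Lemma \ref{Lemma 8}: by rerunning the Adams integration-by-parts with $f$ replaced by $|f|^p(1+|\nabla U|)^{2^{m-1}p-1}$ and \emph{halving} the exponent at each step (splitting $(1+|\nabla U|)^{2^{m-1}p-1}$ into a $(1+|\nabla U|)^{2^{m-1}(p-1)}$ factor against $f^{p-1}$ and a $(1+|\nabla U|)^{2^{m-2}p}$ factor against $|\nabla f|$), one gets
\[
\int (1+|\nabla U|)^{2^{m-1}p}\,|f|^p\,d\mu \leq K\,\|f\|_{m,p}^p .
\]
Since $2^{m-1}>m$ for $m\geq 3$ (and $2^{1}=2>2-\varepsilon$ for $m=2$), the assumption $|\nabla^m U|\leq K_m(1+|\nabla U|)^m$ leaves a strict exponent gap $2^{m-1}-m>0$; Young's inequality on the exponent then delivers the tunable $\epsilon$, and Theorem \ref{Thm_W_kpNorms} converts $\|f\|_{m,p}^p$ to $\|\nabla^m f\|_p^p+\|f\|_p^p$. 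The constraint $p\in[2-\gamma,\infty)$ (specifically $p\geq 2-2^{2-m}$) comes precisely from the halving step in Lemma \ref{Lemma 8}, not from a Young-conjugate consideration.

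Your integration-by-parts strategy for $3\leq k\leq m$ is not worked out and does not clearly close. The identity you write,
\[
\int (\partial^\alpha U)^p f^p\,d\mu = -\int (\partial^\beta U)\,\partial_i\bigl[(\partial^\alpha U)^{p-1} f^p\bigr]\,d\mu + \int (\partial^\beta U)(\partial^\alpha U)^{p-1} f^p (\partial_i U)\,d\mu,
\]
produces in the first term the factor $\partial_i\partial^\alpha U$ of order $k+1$, which for $k=m$ is not controlled by \textbf{(A$_m$)}; you flag this yourself as ``the main obstacle,'' but the proposed fix (``perform the IBP so the extra derivative falls on $f$'') is a statement of intent, not an argument. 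Moreover the second term still carries $(\partial^\alpha U)^{p-1}$ with $|\alpha|=k$, so it is not evident that iterating actually lowers the order of the $U$-derivatives at all. The paper's route via Lemma \ref{Lemma 8} avoids all of this: the extra slack is obtained by \emph{increasing} the power of $(1+|\nabla U|)$ that can be absorbed, not by redistributing derivatives of $U$.
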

\; %

\noindent
The proof of this theorem is based on the following lemma. 

\begin{lemma} \label{Lemma 8}   
Under the assumption \textbf{$($A$_m)$}, we have \\
$\exists \gamma >0$, $\forall p \in [2- \gamma, \infty)$, $\exists K>0$, $\forall f \in W_{m,p}(\mu)$,
\begin{equation}
\int ( 1+ |\nabla U|)^{2^{m-1} p} | f|^p d\mu \leq K \| f \|_{m,p}^p
\end{equation}
where $\| f \|_{m,p}\equiv \| f \|_{m,p, \mu}$ is the weighted Sobolev norm in the space $W_{m,p}(\mu)$. 
\end{lemma}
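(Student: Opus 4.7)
The proof proceeds by induction on $m$. The base cases $m=1$ and $m=2$ are direct applications of Adams' Lemma \ref{AdamsLem.2}, since $2^{m-1}p = mp$ in both instances.

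For the inductive step from $m-1$ to $m$, the essential substitution is $g \equiv f(1+|\nabla U|)^{2^{m-2}}$, which gives the identity
\[|g|^p(1+|\nabla U|)^{2^{m-2}p} = |f|^p(1+|\nabla U|)^{2^{m-1}p}.\]
Applying the inductive hypothesis to $g$ yields
\[\int |f|^p(1+|\nabla U|)^{2^{m-1}p}\,d\mu \leq C\,\|g\|_{m-1,p}^p,\]
so the task reduces to showing $\|g\|_{m-1,p}^p \leq C'\|f\|_{m,p}^p$.

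To estimate $\|g\|_{m-1,p}^p$, I would expand $\nabla^\alpha g$ via Leibniz's rule and bound each factor $|\nabla^\beta(1+|\nabla U|)^{2^{m-2}}|$ by an iterated chain rule together with condition $(A_m)$, yielding pointwise estimates of the form
\[|\nabla^\beta(1+|\nabla U|)^{2^{m-2}}| \leq C\,(1+|\nabla U|)^{2^{m-2}+|\beta|},\]
with an additional $\varepsilon$-improvement for $|\beta|=1$ coming from the (ARC) part of $(A_m)$. Each resulting integral $\int |\nabla^\gamma f|^p(1+|\nabla U|)^{(2^{m-2}+|\alpha|-|\gamma|)p}\,d\mu$ would then be handled by applying either Adams' Lemma \ref{AdamsLem.2} or the inductive hypothesis to $\nabla^\gamma f$ when $|\gamma|\geq 1$, and by Young's inequality $(1+t)^a \leq \delta(1+t)^{2^{m-1}} + C_\delta$ to absorb into the target left-hand side when $|\gamma|=0$ and the exponent $2^{m-2}+|\beta|$ is strictly below $2^{m-1}$.

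The main obstacle I anticipate is the critical endpoint $|\gamma|=0$, $|\beta|=m-1$, where the weight exponent $2^{m-2}+m-1$ attains the target $2^{m-1}$ (with equality precisely at $m=3$). Standard Young's absorption is insufficient there since the implicit constant from $(A_m)$ applied to $|\nabla^m U|$ does not vanish. Closing this case will likely require a finer decomposition of $\nabla^{m-1}(1+|\nabla U|)^{2^{m-2}}$ separating pieces carrying the (ARC) $\varepsilon$-improvement (from each internal factor of $|\nabla^2 U|$) from the piece governed purely by $|\nabla^m U|\leq K_m(1+|\nabla U|)^m$, together with the restriction $p\in[2-\gamma,\infty)$ to exploit proximity to the Hilbert space setting where the commutator estimates of Theorem \ref{EquivThm2} provide additional control.
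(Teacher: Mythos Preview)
Your inductive substitution $g = f(1+|\nabla U|)^{2^{m-2}}$ and the reduction to $\|g\|_{m-1,p}\le C\|f\|_{m,p}$ is a reasonable strategy, but the obstacle you flag at $|\gamma|=0$, $|\beta|=m-1$ is a genuine gap, not a technicality. For $m=3$ the worst piece of $\nabla^{2}(1+|\nabla U|)^{2}$ carries a factor $|\nabla^{3}U|$, and $(A_m)$ only gives $|\nabla^{3}U|\le K_{3}(1+|\nabla U|)^{3}$ with \emph{no} $\varepsilon$-improvement and no smallness of $K_{3}$. The resulting term is therefore $\lesssim K_{3}^{p}\int|f|^{p}(1+|\nabla U|)^{4p}\,d\mu$, i.e.\ a fixed multiple of the very quantity you are trying to bound, and nothing in your proposed fixes (isolating $\varepsilon$-carrying pieces, invoking the $p\in[2-\gamma,\infty)$ restriction, or appealing to Theorem~\ref{EquivThm2}) produces a small constant in front of this top-order piece. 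The argument does not close at $m=3$.

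The paper's proof avoids this entirely by working one derivative at a time rather than jumping from level $m-1$ to $m$ in the Sobolev scale. It substitutes $|f|^{p}(1+|\nabla U|)^{2^{m-1}p-1}$ into the \emph{first-order} inequality $\int h(1+|\nabla U|)\,d\mu\le \int|\nabla h|\,d\mu$, so the product rule introduces only $\nabla^{2}U$. The (ARC) bound $|\nabla^{2}U|\le C(1+|\nabla U|)^{2-\varepsilon}$ then gives the $\varepsilon$-slack needed for Young's absorption, while the gradient term is split (here is where $p\ge 2-2^{2-m}$ enters) to yield the one-step recursion
\[
\int|f|^{p}(1+|\nabla U|)^{2^{m-1}p}\,d\mu \;\le\; K\Bigl(\int|\nabla f|^{p}(1+|\nabla U|)^{2^{m-2}p}\,d\mu + \int|f|^{p}\,d\mu\Bigr).
\]
Iterating this $m-1$ times and finishing with Adams' Lemma~\ref{AdamsLem.2} gives the result. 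In particular, no derivative of $U$ beyond the second ever appears, so the full $(A_m)$ hypothesis is not even used in this lemma, and the $m=3$ difficulty you encountered simply does not arise.
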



\begin{proof}[Proof of Lemma \ref{Lemma 8} ] 
\label{Lemma 8 Proof}
Arguing as at the beginning of the proof of Theorem \ref{AdamsLem.2}  
for $U\equiv U(d)$, we have
\begin{equation}
\int f(1+|\nabla U|) d \mu \leq \int|\nabla f| d \mu
\end{equation}
Replacing $f$ with $|f|^p (1+|\nabla U|)^{2^{m-1}p-1}$, we have
\begin{equation} \label{Lm8.1}
\begin{aligned}
\int |f|^p (1+|\nabla U|)^{2^{m-1}p} d\mu \leq & \int |\nabla (|f|^p (1+|\nabla U|)^{2^{m-1}p-1})d\mu \\
 \leq&  p \int |\nabla f| |f|^{p-1}  (1+|\nabla U|)^ {2^{m-1}p-1} d\mu \\
+&(2^{m-1}p-1) \int |f|^p  (1+|\nabla U|) ^{2^{m-1}p-2} |\nabla^2 U| d\mu
\end{aligned}
\end{equation}
For the first term on the right hand side of \eqref{Lm8.1}, under the assumption $p \geq 2-2^{2-m}$, $\forall \epsilon >0$, $\exists C(\epsilon) >0$ such that
\begin{equation}
\begin{split}
& \int |\nabla f| |f|^{p-1}  (1+|\nabla U|)^ {2^{m-1}p-1} d\mu \\
 & \leq  \int |f|^{p-1}  (1+|\nabla U|) ^{2^{m-1}(p-1)} \cdot |\nabla f |  (1+|\nabla U|) ^{2^{m-2}p} d\mu \\
& \quad \leq  \epsilon \int |f|^p  (1+|\nabla U|) ^{2^{m-1}p} d\mu  
+  C(\epsilon) \int |\nabla f|^p  (1+|\nabla U|) ^{2^{m-2} p} d\mu
\end{split}
\end{equation}
For the second term on the right hand side of \eqref{Lm8.1}
\begin{equation}
\begin{aligned}
\int |f|^p  (1+|\nabla U|) ^{2^{m-1}p-2} |\nabla^2 U| d\mu \leq & C \int |f|^p  (1+|\nabla U|) ^{2^{m-1}p - \delta} d\mu \\
\leq & \epsilon  \int |f|^p  (1+|\nabla U|) ^{2^{m-1}p} d\mu  
+  C(\epsilon) \int |f|^p d\mu 
\end{aligned}
\end{equation}
Taking $\epsilon = \frac{1}{4}$ in both estimates above, we arrive at
\begin{equation}
\frac{1}{2} \int |f|^p  (1+|\nabla U|) ^{2^{m-1}p} d\mu \leq K\left(  \int |\nabla f|^p  (1+|\nabla U|) ^{2^{m-2}p} d\mu + \int |f|^p d\mu \right)
\end{equation}
From here we proceed by induction to conclude with the following bound
\begin{equation}
\int ( 1+ |\nabla U|)^{2^{m-1} p} | f|^p d\mu \leq K \| f \|_{m,p, \mu}^p
\end{equation}

In fact, if $p \geq 2- 2^{2-m}$, then for all $j \leq m$, $p \geq 2-2^{2-j}$, hence
\begin{equation}
\begin{aligned}
\int |f|^p  (1+|\nabla U|) ^{2^{m-1}p} d\mu \leq&  K\left(  \int |\nabla f|^p  (1+|\nabla U|) ^{2^{m-2}p} d\mu + \int |f|^p d\mu \right) \\
\leq & K^2 \left(  \int |\nabla^2 f|^p  (1+|\nabla U|) ^{2^{m-3}p} d\mu + \int |f|^p d\mu  + \int |\nabla f|^p d\mu \right) \\
\leq & \cdots \\
\leq & K^{m-2} \left(  \int |\nabla^{m-2} f|^p  (1+|\nabla U|) ^{2p} d\mu \right.  \\
+& \left. \int |f|^p d\mu  + \int |\nabla f|^p d\mu+ \cdots + \int |\nabla^{m-3} f|^p d\mu  \right) \\
\leq & K^{m-1} \left(  \int |\nabla^{m-1} f|^p  (1+|\nabla U|) ^{p} d\mu \right.  \\
+& \left. \int |f|^p d\mu  + \int |\nabla f|^p d\mu+ \cdots + \int |\nabla^{m-2} f|^p d\mu  \right) \\
\leq & C  \| f \|_{m,p, \mu}^p
\end{aligned}
\end{equation}
where the last line is due to the Adam's inequality of Lemma \ref{AdamsLem.2}. 
\end{proof}

\begin{remark} \label{REM 2}
From the proof we can see that the inequality holds when $p \in [ 2-2^{2-m} , \infty)$. 
\end{remark}
\noindent Assuming Lemma 8
we complete the proof  of the Theorem 12 as follows.
\begin{proof}[Completion of the Proof  of Theorem \ref{Revised Adam's Inequality}]
\label{Thm 12 Proof}
For any $\epsilon >0$, we have
\begin{equation}
\begin{aligned}
\int |\nabla^m U |^p |f|^p d\mu \leq& C\int  (1+|\nabla U|) ^{(2^{m-1} - \delta_m)p} |f|^p d\mu \\
\leq & \epsilon \int  (1+|\nabla U|) ^{2^{m-1}p} |f|^p d\mu + K(\epsilon) \int |f|^p d\mu \\
\leq  & \epsilon ( \| \nabla^m f \|_p^p + \| f \|_p^p ) + K(\epsilon) \| f \|_p^p \\
\leq &  \epsilon \| \nabla^m f \|_p^p + K \| f \|_p^p
\end{aligned}
\end{equation}

with some constants $K(\epsilon), K\in(0,\infty)$ independent of $f$. (Note that in the first inequality we have actually used a weaker condition than $(A_m)$.)
\end{proof}
\qed

\begin{theorem} \label{Thm 13}
 Let $d \mu=e^{-U} d \lambda,$ with a function $U$ for which $|\nabla U| _{\rightarrow|x| \rightarrow \infty} \infty$ and which satisfies assumption \textbf{$($A$_m)$}, for $m\in \mathbb{N}$. 
 Then the norms $\|f\|_{L, k, 2} $, $\|f\|_{k, 2}$ and   $\|f\|_{k, 2}^{\sim}$ , 
 are equivalent on $W_{k, 2}$, for $k\in \mathbb{N},\; k\leq m $.
\end{theorem}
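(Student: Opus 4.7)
Since Theorem \ref{Thm_W_kpNorms} gives $\|f\|_{k,2}\asymp\|f\|_{k,2}^{\sim}$ and Theorem \ref{EquivThm2} supplies $\|L^{k/2}f\|_2\leq C\|f\|_{k,2}$, the task reduces to proving the reverse inequality
\[
\|\nabla^k f\|_2\leq C\bigl(\|L^{k/2}f\|_2+\|f\|_2\bigr),\qquad k\leq m,
\]
where $\|L^{k/2}f\|_2$ is read as $\|L^l f\|_2$ for $k=2l$ and $\|\nabla L^l f\|_2=\|L^{l+1/2}f\|_2$ for $k=2l+1$. The plan is to argue by induction on $k$, using the Revised Adam's Inequality (Theorem \ref{Revised Adam's Inequality}) to perform the absorption step that was unavailable when Theorem \ref{EquivThm2} was proved. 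The base case $k=1$ is the identity $\|\nabla f\|_2^2=\mu(fLf)=\|L^{1/2}f\|_2^2$.

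For the inductive step I would expand $\|\nabla^k f\|_2^2=\sum_{|\alpha|=k}\mu(|\nabla^\alpha f|^2)$ by iterated integration by parts, using $\nabla_j^{\ast}=-\nabla_j+\partial_j U$ together with the commutator identity $[\nabla_j^{\ast},\nabla_i]=-\partial_i\partial_j U$, following the pattern worked out for $k=2,3$ in Theorem \ref{Thm.NormsRelat.1}. Collecting $\sum_j\nabla_j^{\ast}\nabla_j=L$ and tracking the $U$-derivatives produced by each swap via the iterated commutator formula in Lemma \ref{CommutatorLemma}, this produces an identity of the form
\[
\|\nabla^k f\|_2^2 = \|L^{k/2}f\|_2^2 + \sum_s R_s(f),
\]
in which each remainder $R_s(f)$ is a finite sum of integrals
\[
\int (\nabla^{a}U)\,(\nabla^{b_1}f)(\nabla^{b_2}f)\,d\mu,\qquad 2\leq a\leq k,\ a+b_1+b_2=2k,\ b_1,b_2\leq k-1,
\]
carrying a single $U$-derivative factor (each swap produces exactly one $\partial_i\partial_j U$).

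The remainders would be controlled by Cauchy--Schwarz together with Theorem \ref{Revised Adam's Inequality} applied componentwise with $p=2$ and $k\leftarrow a$ to $g=\nabla^\beta f$, giving
\[
\int |\nabla^a U|^2 |\nabla^\beta f|^2\,d\mu\leq \varepsilon\,\|\nabla^{a+|\beta|}f\|_2^2 + K(\varepsilon)\,\|\nabla^\beta f\|_2^2.
\]
When the pairing produces $a+b=k$, the term $\varepsilon\|\nabla^k f\|_2^2$ can be absorbed into the left-hand side for $\varepsilon$ small enough; when $a+b<k$, the resulting $\|\nabla^j f\|_2$ are controlled by the inductive hypothesis. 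Configurations where the naive Cauchy--Schwarz would produce $a+b>k$ (e.g.\ $a=2$, $b_1=b_2=k-1$) would be handled by the refined H\"older--Young scheme of Theorem \ref{Thm.NormsRelat.1}: bound $|\nabla^a U|\leq C(1+|\nabla U|)^{a-\delta}$ from assumption \textbf{$(A_m)$}, interpolate by H\"older between $(1+|\nabla U|)^a$ (controlled via Lemma \ref{AdamsLem.2}) and the plain $L^2$ norm, and close with Young's inequality. Finally, spectral calculus for the positive self-adjoint operator $L$ yields $\|L^{j/2}f\|_2\leq \|L^{k/2}f\|_2^{j/k}\|f\|_2^{1-j/k}\leq\|L^{k/2}f\|_2+\|f\|_2$ for $j<k$, so every inductively bounded lower-order norm feeds cleanly into $\|f\|_{L,k,2}$.

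The main difficulty is the combinatorial bookkeeping that guarantees every remainder, after its Cauchy--Schwarz or H\"older split, yields pieces whose total order of $f$-derivatives is at most $k$, so that either absorption (at order exactly $k$) or the inductive hypothesis (at order $<k$) is available. The constraint $a+b_1+b_2=2k$ with $a\geq 2$ and $b_1,b_2\leq k-1$ guarantees a balanced splitting always exists, and Lemma \ref{CommutatorLemma} certifies that the single-$U$-derivative factor structure is preserved at every stage of the expansion, which is what makes Theorem \ref{Revised Adam's Inequality} applicable term by term.
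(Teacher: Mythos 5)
Your proposal correctly identifies the hard direction as $\|\nabla^k f\|_2\leq C(\|L^{k/2}f\|_2+\|f\|_2)$ and correctly pinpoints Theorem~\ref{Revised Adam's Inequality} as the device that enables absorption, but the claimed structure of the remainder terms is wrong, and this breaks the argument as written. You assert that iterated integration by parts in $\|\nabla^k f\|_2^2$ produces only remainders of the form $\int(\nabla^a U)(\nabla^{b_1}f)(\nabla^{b_2}f)\,d\mu$ carrying a \emph{single} $U$-derivative factor. That is already false at $k=3$: the paper's own computation (\ref{est_32}) in Theorem~\ref{Thm.NormsRelat.1} produces the term
\[
\sum_{j}\mu\bigl([L,\nabla_j]f\cdot[L,\nabla_j]f\bigr)=\sum_{j,k,l}\mu\bigl(\nabla_k\nabla_j U\;\nabla_l\nabla_j U\;\nabla_k f\;\nabla_l f\bigr),
\]
which carries a \emph{product} of two second derivatives of $U$. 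In a direct quadratic-form expansion of $\|\nabla^k f\|_2^2$ against $\|L^{k/2}f\|_2^2$, commutators are inevitably squared or paired, so the single-factor bookkeeping on which your term-by-term application of Theorem~\ref{Revised Adam's Inequality} rests does not hold, and neither does your parenthetical justification (``each swap produces exactly one $\partial_i\partial_j U$''), because several swaps and their products survive in the final quadratic form.

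The paper avoids this combinatorial blow-up by a cleaner inductive reduction rather than a direct expansion. It applies the already-proved $m=2$ equivalence to $g=\nabla^{m-2}f$, giving $\|\nabla^m f\|_2\lesssim\|L\nabla^{m-2}f\|_2+\|\nabla^{m-2}f\|_2$; then it splits $L\nabla^{m-2}f=\nabla^{m-2}(Lf)+[L,\nabla^{m-2}]f$, handles the first summand by the inductive hypothesis at order $m-2$ applied to $Lf$ (yielding $\|Lf\|_2+\|L^{m/2}f\|_2$), and expands the commutator once via Lemma~\ref{CommutatorLemma}. Because the commutator is taken a single time and lands inside an $L^2$ norm rather than a squared quadratic form, each resulting term genuinely carries exactly one factor $\nabla^j U$, so Theorem~\ref{Revised Adam's Inequality} applies termwise and the $\varepsilon\|\nabla^m f\|_2^2$ contribution can be absorbed. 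That one-step reduction is what secures the single-$U$-derivative structure your direct expansion does not preserve. If you insist on the direct route, you would have to admit products of $U$-derivatives in the remainders and set up a more elaborate Hölder/absorption scheme (as the paper does ad hoc for $k\leq 3$); the systematic fix is to adopt the reduction via the $m=2$ case and $[L,\nabla^{m-2}]$.
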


\begin{proof}
The proof is via induction.
The cases $k=1,2,$ are included into Theorem \ref{Thm.NormsRelat.1}. Suppose the statement holds for all $k$ such that $3 \leq k  \leq m^{\prime}-1 $.  We will show that it also holds for  $m^{\prime}\leq m$. Using the inductive assumption,  with some constants $\bar C, C\in(0,\infty)$, we have
\[ 
\begin{aligned}
\left\|L^{\frac{m}{2}} f\right\|_{2} &=\left\|L^{\frac{m-2}{2}}(L f)\right\|_{2} \\
& \leq \bar C \left( \|L f\|_{2}+\left\|\nabla^{m-2}(L f)\right\|_{2} \right) \\
& \leq C\left(\|f\|_{2}+\left\|\nabla^{2} f\right\|_{2}\right)+ \bar C \left\|\nabla^{m-2}\left(-\nabla^{2} f+\nabla U \nabla f\right)\right\|_{2} \\
& \leq C\left(\|f\|_{2}+\left\|\nabla^{2} f\right\|_{2}\right)+ \bar C \left\|\nabla^{m} f\right\|_{2}+\bar C \left\|\nabla^{m-2}(\nabla U \nabla f)\right\|_{2}
\end{aligned}
\]
For the last term on the right hand side of this inequality, we have
\[
\begin{aligned}
\left\|\nabla^{m-2}(\nabla U \nabla f)\right\|_{2}^{2} \equiv & \sum_{j_{1}, \ldots, j_{m-2}}\left\|\nabla_{j_{1}, \ldots, j_{m-2}}^{m-2}\left(\sum_{l} \nabla_{l} U \nabla_{l} f\right)\right\|_{2}^{2} \\
=& \sum_{j_{1}, \ldots, j_{m-2}}\left\|\sum_{l}\left(\left[\nabla_{j_{1}, \ldots, j_{m-2}}^{m-2}, \nabla_{l} U\right] \nabla_{l} f+\nabla_{l} U \nabla_{j_{1}, \ldots, j_{m-2}}^{m-2} \nabla_{l} f\right)\right\|_{2}^{2} \\
\leq&  2 \sum_{j_{1}, \ldots, j_{m-2}}\left\|\sum_{l}\left(\left[\nabla_{j_{1}, \ldots, j_{m-2}}^{m-2}, \nabla_{l} U\right]\right) \nabla_{l} f\right\|_{2}^{2} \\
+& 2 \sum_{j_{1}, \ldots, j_{m-2}}\left\|\sum_{l} \nabla_{l} U \nabla_{j_{1}, \ldots, j_{m-2}}^{m-2} \nabla_{l} f\right\|_{2}^{2}
\end{aligned}
\]

For the last term on the right hand side, we use the Adams' inequality to get
\[
2 \sum_{j_{1}, . ., j_{m-2}}\left\|\sum_{l} \nabla_{l} U \nabla_{j_{1}, \ldots, j_{m-2}}^{m-2} \nabla_{l} f\right\|_{2}^{2} \leq \tilde C\|f\|_{m, 2}^{2}
\]
with some constant $\tilde C \in(0, \infty)$ independent of $f$. For the first term on the right hand side involving the commutator, we use the formula for the commutator of Lemma \ref{CommutatorLemma}
\[
\left[\nabla_{j_{1}, \ldots, j_{m-2}}^{m-2}, \nabla_{l} U\right]=\sum_{s=1}^{m-2} \sum_{\mathbf{j}_{s}, \mathbf{i}_{m-2-s}}\left(\nabla_{\mathbf{j}_{s}}^{s} \nabla_{l} U\right) \nabla_{\mathbf{i}_{m-2-s}}^{m-2-s}
\]
with $\mathbf{j}_{s} \equiv\left\{j_{r_{1}}, \ldots, j_{r_{s}}\right\}$ and $\mathbf{i}_{m-2-s}\left\{j_{r_{s+1}}, \ldots, j_{r_{m-2}}\right\}$ are partitions of $j_{1}, \ldots, j_{m-2}$
of cardinality $s$ and $m-2-s,$ respectively, to get
\[
\begin{aligned}
& 2 \sum_{j_{1}, \ldots, j_{m-2}}\left\|\sum_{l}\left(\left[\nabla_{j_{1}, \ldots, j_{m-2}}^{m-2}, \nabla_{l} U\right]\right) \nabla_{l} f\right\|_{2}^{2} \\
=& 2 \sum_{j_{1}, \ldots, j_{m-2}}\left\|\sum_{l} \sum_{s=1}^{m-2} \sum_{\mathbf{j}_{s}, \mathbf{i}_{m-2-s}}\left(\nabla_{\mathbf{j}_{s}}^{s} \nabla_{l} U\right) \nabla_{\mathbf{i}_{m-2-s}}^{m-2-s} \nabla_{l} f\right\|_{2}^{2} \\
\leq& 2(m-2) \sum_{s=1}^{m-2} \sum_{s=1}^{m-2} \sum_{j_{1}, \ldots, j_{m-2}}\left\|\sum_{l} \sum_{\mathbf{j}_{s}, \mathbf{i}_{m-2-s}}\left(\nabla_{\mathbf{j}_{s}}^{s} \nabla_{l} U\right) \nabla_{\mathbf{i}_{m-2-s}}^{m-2-s} \nabla_{l} f\right\|_{2}^{2} \\
\leq& 2(m-2) \sum_{s=1}^{m-2} \sum_{s=1}^{m-2} \sum_{j_{1}, \ldots, j_{m-2}} \sum_{l, l^{\prime}} \sum_{\mathbf{j}_{s}, \mathbf{i}_{m-2-s}, \mathbf{j}_{s}^{\prime}, \mathbf{i}_{m-2-s}^{\prime}} \\
\int& \left(\left(\nabla_{\mathbf{j}_{s}}^{s} \nabla_{l} U\right) \nabla_{\mathbf{i}_{m-2-s}}^{m-2-s} \nabla_{l} f\right)\left(\left(\nabla_{\mathbf{j}_{s}^{\prime}}^{s} \nabla_{l} U\right) \nabla_{\mathbf{i}_{m-2-s}^{\prime}}^{m-2-s} \nabla_{l^{\prime}} f\right) d \mu
\end{aligned}
\]

Hence
\[
\begin{aligned}
&2 \sum_{j_{1}, \ldots, j_{m-2}}\left\|\sum_{l}\left(\left[\nabla_{j_{1}, \ldots, j_{m-2}}^{m-2}, \nabla_{l} U\right]\right) \nabla_{l} f\right\|_{2}^{2} \\
\leq& 2(m-2) \sum_{s=1}^{m-2} \sum_{s=1}^{m-2} \sum_{j_{1}, \ldots, j_{m-2}} \sum_{l, l^{\prime}} \sum_{\mathbf{j}_{s}, \mathbf{i}_{m-2-s}, \mathbf{j}_{s}^{\prime}, \mathbf{i}_{m-2-s}^{\prime}} \\
&\left(\int\left|\left(\nabla_{\mathbf{j}_{s}}^{s} \nabla_{l} U\right) \nabla_{\mathbf{i}_{m-2-s}}^{m-2-s} \nabla_{l} f\right|^{2} d \mu\right)^{\frac{1}{2}}\left(\int\left|\left(\nabla_{\mathbf{j}_{s}^{\prime}}^{s} \nabla_{l} U\right) \nabla_{\mathbf{i}_{m-2-s}^{\prime}}^{m-2-s} \nabla_{l^{\prime}} f\right|^{2} d \mu\right)^{\frac{1}{2}}
\end{aligned}
\]
since by our assumption about the derivatives of $U$ and the revised Adams' inequality of Theorem \ref{Revised Adam's Inequality}, we have
\[
\begin{array}{c}
\int\left|\left(\nabla_{\mathbf{j}_{s}}^{s} \nabla_{l} U\right) \nabla_{\mathbf{i}_{m-2-s}}^{m-2-s} \nabla_{l} f\right|^{2} d \mu \leq K\left\|\nabla_{\mathbf{i}_{m-2-s}}^{m-2-s} \nabla_{l} f\right\|_{s+1}^{2} \\
\int\left|\left(\nabla_{\mathbf{j}_{s}^{\prime}}^{s} \nabla_{l} U\right) \nabla_{\mathbf{i}_{m-2-s}^{\prime}}^{m-2-s} \nabla_{l^{\prime}} f\right|^{2} d \mu \leq K\left\|\nabla_{\mathbf{i}_{m-2-s}^{\prime}}^{m-2-s} \nabla_{l^{\prime}} f\right\|_{s+1}^{2}
\end{array}
\]
we conclude that with some constant $C^{\prime} \in(0, \infty)$, we have
\[
2 \sum_{j_{1}, \ldots, j_{m-2}}\left\|\sum_{l}\left(\left[\nabla_{j_{1}, \ldots, j_{m-2}}^{m-2}, \nabla_{l} U\right]\right) \nabla_{l} f\right\|_{2}^{2} \leq C^{\prime}\|f\|_{m, 2}^{2}
\]
This concludes the proof of the fact that $\|\cdot\|_{L,m,2}$ are dominated by
$\|\cdot\|_{m,2}$ and $\|\cdot\|_{m,2}^{\sim}$ norms.\\
\noindent To show converse domination, we need to prove that 
\[
 \| \nabla^m f\|_2 \leq C\left( \| f \|_2 + \| L^{\frac{m}{2}} f\|_2 \right)
\]
with some constant $C\in(0,\infty)$ independent of $f$.
The cases $m\leq 3$ was already proven in 
Theorem  \ref{Thm.NormsRelat.1}. We assume by induction that this holds for all $i$ such that $3 \leq i \leq m-1$. Applying the case $m=2$ to $\nabla^{m-2} f$, we have $\| \nabla^m f \|_2 \leq C \| L \nabla^{m-2} f \|_2$. Using the induction assumption, we have
\begin{equation}
\begin{aligned}
\| L \nabla^{m-2} f \|_2 \leq & \| \nabla^{m-2} Lf \|_2 + \| [L, \nabla^{m-2}] f \|_2\\
\leq & C\left( \| Lf \|_2 + \| L^{\frac{m}{2}} f \|_2 + \| [L, \nabla^{m-2}] f \|_2 \right)
\end{aligned}
\end{equation}
For the last term, by Lemma \ref{CommutatorLemma}, with some constant 
$\hat C$, we have  
\begin{equation}
\| [L, \nabla^{m-2}] f \|_2 \leq \hat C \sum_{j=2}^{m}  \| \nabla^j U \nabla^{m-j} f  \|_2
\end{equation}
For each $2 \leq j \leq m-1$, using  Theorem \ref{Revised Adam's Inequality}, for any $\delta>0$ with some constant $C_\delta\in(0,\infty)$, we have
\begin{equation}
\| \nabla^j U \nabla^{m-j} f \|_2^2 \leq  \delta \| \nabla^{m} f \|_2^2 
+ C_\delta \|  f \|_2^2 ) 
\end{equation}
Combining everything together, we arrive at
\begin{equation}
 \| \nabla^m f\|_2 \leq C \delta \|\nabla^m f \|_2^2 + D(\delta) ( \| f \| _2^2 + \| L^{\frac{m}{2}} f \|_2^2 ) 
\end{equation}
Choosing $\delta = \frac{1}{2C}$, we arrive at the desired result. 
\end{proof}

Next we study the case $p>2$. Before that, let us note that the seminorm 
$\| L^{\frac{j}{2}}f \|_p$ has similar properties as $\| \nabla^j f \|_p$.  
\label{REMARK}
First of all for $p=2$ using spectral theory one can show that, for the non negative selfadjoint operator $L$, any $\delta\in(0,\infty)$ with some constant $C_\delta\in(0,\infty)$, one has
\begin{equation}
\|L^{\frac{k}{2}}f\|_2\leq \delta \|L^{\frac{m}{2}}f\|_2 +C_\delta \|f\|_2
\end{equation}
Since in our context $-L$ is a Markov generator, for any $p\in[1,\infty)$ and any $\lambda\in(0,\infty)$, we have the following property of resolvent
\[
\|(\lambda +L)^kf\|_p\leq  \|f\|_p
\] 
and hence
\[
\| (\lambda +L)^{-k} f \|_p \leq  \|f\|_p
\] 


 \label{YTheorem 5}
\begin{theorem} 
Suppose condition \textbf{$($A$_m)$} holds. 
Then for any $p \in[2, \infty)$, 
\begin{equation}
\left\|\nabla^{m} f\right\|_{p} \leq C( \|f\|_{p}+\left\|L^{\frac{m}{2}} f\right\|_{p})
\end{equation}
\end{theorem}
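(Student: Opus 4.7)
The plan is to mirror the inductive architecture of the second half of Theorem~\ref{Thm 13}, which handled $p = 2$, but with three systematic replacements adapted to $p \in [2,\infty)$: H\"older's inequality in place of Cauchy-Schwarz; the revised Adams inequality of Theorem~\ref{Revised Adam's Inequality}, valid for $p \geq 2 - \gamma$, in place of the $L^2$ Adams bound of Lemma~\ref{AdamsLem.2}; and the resolvent contractivity $\|(\lambda + L)^{-k} f\|_p \leq \|f\|_p$ recorded before the statement, used to derive moment inequalities of the form $\|L^j f\|_p \leq \delta \|L^{m/2} f\|_p + C_\delta \|f\|_p$ for $0 \leq j \leq m/2$.

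I would begin with the base case $m = 2$, namely $\|\nabla^2 f\|_p \leq C(\|f\|_p + \|L f\|_p)$. The identity $\Delta f = -L f + \nabla U \cdot \nabla f$ combined with Theorem~\ref{Revised Adam's Inequality} gives $\|\nabla U \cdot \nabla f\|_p \leq \delta \|\nabla^2 f\|_p + C_\delta \|f\|_p$ for any $\delta > 0$. To pass from a control of $\|\Delta f\|_p$ to a control of $\|\nabla^2 f\|_p$ in $L^p(\mu)$, I split $\mathbb{R}^n$ into a ball $B(0,R)$ and its complement, as in the proof of Theorem~\ref{Thm_W_kpNorms}: on the ball, the Euclidean $L^p$ Calder\'on-Zygmund estimate for $\Delta$ transfers to $L^p(\mu)$ via local boundedness of $U$; on the complement, the growth $|\nabla U| \to \infty$ combined with the weighted bound of Theorem~\ref{Revised Adam's Inequality} contributes a factor $(1+|\nabla U|)^{-\mathrm{power}}$ which is arbitrarily small for large $R$. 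Choosing $R$ large and then $\delta$ small closes the estimate.

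For the inductive step at $m \geq 3$, assume the result for all $m' < m$. Applying the base case to the vector-valued function $\nabla^{m-2} f$ gives $\|\nabla^m f\|_p \leq C(\|\nabla^{m-2} f\|_p + \|L \nabla^{m-2} f\|_p)$, and by the commutator formula of Lemma~\ref{CommutatorLemma},
\[
L \nabla^{m-2} f = \nabla^{m-2} L f + [L, \nabla^{m-2}] f,
\]
where $[L, \nabla^{m-2}] f$ is a finite sum of products $\nabla^j U \cdot \nabla^{m-j} f$ for $2 \leq j \leq m$. The first term is controlled by the inductive hypothesis applied to $L f$ at order $m-2$, giving $\|L f\|_p + \|L^{m/2} f\|_p$; each commutator term is bounded in $L^p(\mu)$ by Theorem~\ref{Revised Adam's Inequality} by $\delta \|\nabla^m f\|_p + C_\delta \|f\|_p$ and absorbed on the left after choosing $\delta$ small; finally $\|L f\|_p$ and $\|\nabla^{m-2} f\|_p$ are reduced to $\|f\|_p + \|L^{m/2} f\|_p$ via the moment inequality and the inductive hypothesis, respectively.

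The main obstacle I anticipate is precisely the base case for $p > 2$: unlike $p = 2$, there is no single integration by parts that converts $\|\nabla^2 f\|_p^2$ directly into $\|L f\|_p^2$, and the argument must carefully combine a local $L^p$ Calder\'on-Zygmund bound for $\Delta$ on Euclidean balls with global weight control from Adams-type bounds, then absorb $\|\nabla^2 f\|_p$ on the left using the small parameter from Theorem~\ref{Revised Adam's Inequality}. Once this $m = 2$ step is secured, the inductive propagation from order $m-2$ to order $m$ follows the template of Theorem~\ref{Thm 13} essentially verbatim, with H\"older's inequality replacing Cauchy-Schwarz at every occurrence.
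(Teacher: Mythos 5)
Your inductive step is essentially the same as the paper's: apply the order-two bound to $\nabla^{m-2}f$, write $L\nabla^{m-2}f = \nabla^{m-2}Lf + [L,\nabla^{m-2}]f$, control the commutator terms $\nabla^j U\cdot\nabla^{m-j}f$ with Theorem~\ref{Revised Adam's Inequality}, absorb the small $\|\nabla^m f\|_p$ contribution, and use the inductive hypothesis on $\nabla^{m-2}(Lf)$. That part matches the paper nearly line for line.

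The genuine gap is in the base case $m=2$, and you have in fact put your finger on exactly the step that does not close. The paper does \emph{not} prove $\|\nabla^2 f\|_p \le C(\|f\|_p + \|Lf\|_p)$ for $p>2$ internally; it cites Theorem~2.1 of Xiang Dong Li \cite{X} (a boundedness-of-Riesz-transforms result for symmetric diffusion operators) for both $m=1$ and $m=2$. Your proposed local/global split cannot replace that citation. The decay factor $\eta_R^{-(m-j)}$ in the complement-of-ball estimate of Theorem~\ref{Thm_W_kpNorms} appears only because one is bounding an \emph{intermediate} derivative $\nabla^j f$, $j<m$, by a strictly higher-order one: the exponent $m-j>0$ is what produces the small factor. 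Here you want to bound $\nabla^2 f$ itself, and the only comparable quantity available from $Lf$ is also second order, so the exponent is zero and there is no gain from $|\nabla U|\to\infty$ on $B^c$. Similarly, transplanting the Euclidean Calder\'on--Zygmund estimate via $g=fe^{-U/p}$ generates a term $\tfrac1{p^2}|\nabla U|^2 f$, and the bound $\int(1+|\nabla U|)^{2p}|f|^p\,d\mu\le K\|f\|_{2,p}^p$ from Lemma~\ref{Lemma 8} carries a fixed constant $K$ multiplying $\|\nabla^2 f\|_p^p$ that cannot be made small; it is not of $\epsilon$-absorption type. So the argument as sketched leaves a term of the same strength as what you are estimating on both sides, with no mechanism to absorb it. For $p=2$ the paper closes this loop by an integration by parts that trades $\mu|\nabla^2 f|^2$ against $\|Lf\|_2^2$ plus lower-order Adams-controlled remainders; for $p>2$ no analogous identity exists, which is precisely why the paper imports the external Riesz-transform input. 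Your plan needs to replace the local/global step with that citation (or an independent proof of the second-order Riesz transform bound on $L^p(\mu)$), after which the rest of your argument is correct.
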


\begin{proof}
We prove this by induction. For $m=2$,  
\begin{equation}
\left\|\nabla^{2} f\right\|_{p} \leq C( \|f\|_{p}+\left\|L f\right\|_{p})
\end{equation}
and for $m=1$  
\begin{equation}
\left\|\nabla f\right\|_{p} \leq C( \|f\|_{p}+\left\|L^{\frac{1}{2}} f\right\|_{p})
\end{equation}
we refer to Theorem 2.1 in \cite{X}, (see also nice review in \cite{Bo} concerning O-U semigroups and references therein).
Suppose
\begin{equation}
\left\|\nabla^{m} f\right\|_{p} \leq C( \|f\|_{p}+\left\|L^{\frac{m}{2}} f\right\|_{p})
\end{equation}
then by inductive assumption, Minkowski inequality and Theorem \ref{Thm_W_kpNorms}, we get
\begin{equation}
\begin{aligned}
\| \nabla^{m+2} f \|_p \leq& C ( \| L( \nabla^m f) \|_p + \| \nabla^m f \|_p ) \\
\leq & C_1( \| [L, \nabla^m] f \|_p + \| \nabla^m (Lf) \|_p + \|f\|_{p}+\left\|L^{\frac{m}{2}} f\right\|_{p}) \\
\leq & C_2( \| [L, \nabla^m] f \|_p + \|L^{\frac{m+2}{2}} f \|_p + \| Lf \|_p+\|f\|_{p}+\left\|L^{\frac{m}{2}} f\right\|_{p}) \\
\end{aligned}
\end{equation}
with some constants $C_1, C_2\in(0,\infty)$ independent of $f$.
In order to deal with the first term, we use the commutator Lemma \ref{CommutatorLemma} and the revised Adam's inequality of Theorem \ref{Revised Adam's Inequality}, to get for any $\epsilon >0$, 
\begin{equation}
\begin{aligned}
\| [L, \nabla^m] f \|_p \leq & C \sum_{i=1}^{m+1}  \| \nabla^{i+1} U \nabla^{m-i+1} f \|_p \\
\leq & C \sum_{i=1}^{m+1} ( \epsilon \| \nabla^{m+2} f \|_p + K(\epsilon) \| \nabla^{m-i+1} f \|_p) \\
\leq & C (\epsilon \| \nabla^{m+2} f \|_p + \| f \|_p )
\end{aligned}
\end{equation}

Taking $\epsilon< \frac{1}{2C}$ , combining the last estimates, we obtain
\begin{equation}
\left\|\nabla^{m+2} f\right\|_{p} \leq C( \|f\|_{p}+\left\|L^{\frac{m+2}{2}} f\right\|_{p})
\end{equation}
\end{proof}

\label{YTheorem 6}
\begin{theorem}
Suppose condition \textbf{$($A$_m)$} holds. 
Then for any $p \in[2, \infty)$, 
\begin{equation}
 \left\|L^{\frac{m}{2}} f\right\|_{p}\leq C( \|f\|_{p}+\left\|\nabla^{m} f\right\|_{p} )
\end{equation}

\end{theorem}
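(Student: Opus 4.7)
The plan is to proceed by induction on $m$ in steps of two, mirroring the structure of the previous theorem (YTheorem 5) but with the roles of $L^{m/2}$ and $\nabla^m$ reversed. The base cases are $m=1$ and $m=2$. For $m=1$, the bound $\|L^{1/2}f\|_p \leq C(\|f\|_p + \|\nabla f\|_p)$ is the Riesz transform estimate already cited from \cite{X} and \cite{Bo}. For $m=2$, I would write $Lf = -\Delta f + \nabla U \cdot \nabla f$, note that $\|\Delta f\|_p \leq \|\nabla^2 f\|_p$, and estimate
\[
\|\nabla U \cdot \nabla f\|_p^p \leq \int |\nabla U|^p |\nabla f|^p\, d\mu \leq \epsilon \|\nabla^2 f\|_p^p + K(\epsilon)\|\nabla f\|_p^p
\]
via the revised Adams inequality of Theorem \ref{Revised Adam's Inequality}, and control the intermediate term $\|\nabla f\|_p$ by $C(\|f\|_p + \|\nabla^2 f\|_p)$ using Theorem \ref{Thm_W_kpNorms}.

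For the inductive step $m \to m+2$, I would use the factorisation $L^{(m+2)/2}f = L^{m/2}(Lf)$ and apply the inductive hypothesis to $Lf$ to obtain
\[
\|L^{(m+2)/2} f\|_p \leq C\bigl( \|Lf\|_p + \|\nabla^m (Lf)\|_p \bigr).
\]
The first term is bounded by the base case $m=2$ and Theorem \ref{Thm_W_kpNorms}, giving at most $C'(\|f\|_p + \|\nabla^{m+2}f\|_p)$. For the second term, I would expand $\nabla^m(Lf) = -\nabla^{m+2}f + \nabla^m(\nabla U \cdot \nabla f)$ and use the Leibniz rule to write
\[
\nabla^m(\nabla U \cdot \nabla f) = \sum_{j=0}^{m} c_{m,j}\; \nabla^{j+1}U \cdot \nabla^{m-j+1}f,
\]
then apply the revised Adams inequality to each term with $g = \nabla^{m-j+1}f$:
\[
\|\nabla^{j+1}U \cdot \nabla^{m-j+1}f\|_p^p \leq \epsilon\|\nabla^{m+2}f\|_p^p + K(\epsilon) \|\nabla^{m-j+1}f\|_p^p.
\]
The intermediate derivatives $\|\nabla^{m-j+1}f\|_p$ are then bounded by $C(\|f\|_p + \|\nabla^{m+2}f\|_p)$ using Theorem \ref{Thm_W_kpNorms}, and choosing $\epsilon$ sufficiently small (depending on $m$ and the number of Leibniz terms) allows the $\|\nabla^{m+2}f\|_p$ contributions to be absorbed into the desired right-hand side.

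The main obstacle will be the highest-order term in the Leibniz expansion, corresponding to $j=m$, which produces $\nabla^{m+1}U \cdot \nabla f$; controlling this strictly within assumption \textbf{(A$_m$)} is delicate and requires the same mild extension of regularity that was tacitly used in the proof of Theorem 13 and YTheorem 5 (so that the revised Adams inequality can be applied at order $m+1$). Aside from this, the argument is a careful bookkeeping exercise: combine all Leibniz terms, choose $\epsilon$ small, absorb, and invoke Theorem \ref{Thm_W_kpNorms} wherever intermediate norms appear, yielding the stated bound $\|L^{(m+2)/2}f\|_p \leq C(\|f\|_p + \|\nabla^{m+2}f\|_p)$ and completing the induction.
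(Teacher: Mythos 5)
Your approach is essentially the same as the paper's: induct by factoring $L^{m/2} = L^{(m-2)/2}L$, apply the inductive hypothesis to $Lf$, and control the $\nabla U \cdot \nabla f$ contributions via the Leibniz rule together with the revised Adams inequality (Theorem~\ref{Revised Adam's Inequality}) and the norm-equivalence of Theorem~\ref{Thm_W_kpNorms}. The paper's own proof is in fact terser than yours: after writing
$\|L^{m+1}f\|_p \le C_{m,p}\bigl(\|\Delta f\|_{2m,p} + \|(\nabla U\cdot\nabla)f\|_{2m,p}\bigr)$,
it simply asserts the final bound $\|L^{m+1}f\|_p \le C_{m+1,p}\|f\|_{2(m+1),p}$ without spelling out how $\|(\nabla U\cdot\nabla)f\|_{2m,p}$ is absorbed --- this is precisely the Leibniz-plus-revised-Adams step that you make explicit, so your version actually fills in the gap left implicit in the paper. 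You also treat the base cases $m=1$ (Riesz transform) and $m=2$ and induct in steps of two, whereas the paper inducts on integer powers of $L$, which strictly speaking only covers even $m$; in that respect your write-up is more complete.

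One small correction: the worry at the end about the $j=m$ Leibniz term $\nabla^{m+1}U\cdot\nabla f$ requiring ``a mild extension of regularity'' is unfounded. If you induct upward to a target index and the theorem's hypothesis is $(\mathbf{A}_{m+2})$ when proving the bound at index $m+2$, then Theorem~\ref{Revised Adam's Inequality} applies for all $k\le m+2$, which includes $k=m+1$. Equivalently, if you fix the target $m$ once and for all and induct on intermediate indices $m'=1,2,\dots,m$ always under $(\mathbf{A}_m)$, the Leibniz expansion of $\nabla^{m'-2}(\nabla U\cdot\nabla f)$ produces at worst $\nabla^{m'-1}U$ with $m'-1\le m-1<m$, which $(\mathbf{A}_m)$ already controls. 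Either bookkeeping closes the induction without any additional hypothesis on $U$.
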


\begin{proof}

First of all we note that
\[
\begin{aligned}
\|L f\|_{p} \leq& \sum_{j}\left\|\nabla_{j}^{2} f\right\|_{p}+\|(\nabla U) \cdot \nabla f\|_{p} \\
\leq& \sum_{i, j}\left\|\nabla_{i} \nabla_{j} f\right\|_{p}+\sum_{j}\left\|\left(\nabla_{j} U\right) \nabla_{j} f\right\|_{p}
\end{aligned}
\]
thus
\begin{equation}
\|L f\|_{p} \leq C_{1, p}\|f\|_{2, p}
\end{equation}

Next assume that for any $1 \leq m \leq k$ we have the following bound
\[
\left\|L^{m} f\right\|_{p} \leq C_{m, p}\|f\|_{2 m, p}
\]

Then, using the inductive assumption definition of $L$ and the norms $\|\cdot\|_{m, p},$ we have
\[
\begin{aligned}
\left\|L^{m+1} f\right\|_{p}=& \left\|L^{m}(-\Delta+\nabla U \cdot \nabla) f\right\|_{p} \\
\leq& C_{m, p}\left(\|(\Delta) f\|_{2 m, p}+\|(\nabla U \cdot \nabla) f\|_{2 m, p}\right) \\
\leq& C_{m, p}\|f\|_{2(m+1), p}+C_{m, p}\|(\nabla U \cdot \nabla) f\|_{2 m, p}
\end{aligned}
\]
Combining all that we with some constant $C_{m+1, p} \in(0, \infty)$, we have
\[
\left\|L^{m+1} f\right\|_{p} \leq C_{m+1, p}\|f\|_{2(m+1), p}
\]
and hence by principle of mathematical induction for all $m \in \mathbb{N}$.
\end{proof}
\label{ENDsection7}
\section{  Higher Order Decay to Equilibrium}
\label{Higher Order Decay to Equi}
In this section we discuss the decay to equilibrium for higher order gradients. First we show under general assumptions utilised in the previous section we have the exponential decay to equilibrium of higher order gradients.  \\ 

For a semigroup $P_{s} \equiv e^{s L}$ generated by $L=\Delta-\nabla U \cdot \nabla$, we set $f_{t} \equiv P_{t} f$ 

Suppose Poincar{\'e} inequality holds 
\[
m_0 \mu (f-\mu f)^2 \leq \mu |\nabla f|^2 
\]
Then for $n\geq 0$, we have
\[
\frac{d}{dt} \mu(L^n f_t)^2= -2 \mu|\nabla L^n f_t|^2\leq 
-2m_0 \mu(L^n f_t-\mu L^n f_t )^2
\]
and so
\[
\mu(L^n f_t-\mu L^n f_t)^2 \leq e^{-2m_0t} \mu(L^n f-\mu L^n f_t)^2.
\]
Then under the conditions of Theorem 11, there exists a constant $C\in(0,\infty)$ such that for $k\in \mathbb{N}$
\[
\mu(\nabla^k f_t)^2 \leq C \|f_t-\mu f\|_{L,\frac{k}{2},2}^2 \leq
e^{-2m_0t} C \|f_t-\mu f\|_{L,\frac{k}{2},2}^2 
\] 
and hence we conclude with the following result.

\begin{theorem}\label{Thm.1DecEq}
Suppose $U$ is locally bounded, $|\nabla U(x)|\to_{|x|\to\infty} \infty$ and it satisfies assumption $(\textbf{A}_k)$.
\\
Suppose there exists $m_0\in(0,\infty)$ such that
\begin{equation} \label{DecEq_Ineq.2}
m_0 \mu \left( f -\mu f\right)^2\leq 
\mu \left( |\nabla f|^2 \right)
\end{equation}
Then there exists a constant $C'\in(0,\infty)$ such that for any $t\in (0,\infty)$, we have
\begin{equation} \label{DecEq_Ineq.3}
\mu(\nabla^k f_t)^2 \leq C' e^{-2m_0t} 
\left(  \mu(\nabla^k f)^2 + \mu(f-\mu f)^2 \right)
\end{equation}
Since under our conditions there exists a constant $c_{k,2}\in(0,\infty)$
\[
 \mu \left( f - M_{k,2} f\right)^2\leq 
c_{k,2}\mu \left( |\nabla^k f|^2 \right)
\]
so there exists $C''\in(0,\infty)$ such that 
\begin{equation} \label{DecEq_Ineq.4}
\mu(\nabla^k (f-M_{k,2})_t)^2 \leq C'' e^{-2m_0t} 
  \mu(\nabla^k f)^2  
\end{equation} 
\end{theorem}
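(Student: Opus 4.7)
The plan is to prove \eqref{DecEq_Ineq.3} by a three-step reduction, and then deduce \eqref{DecEq_Ineq.4} as a short corollary via the higher order Poincar\'e inequality. The three steps for \eqref{DecEq_Ineq.3} are: (a) use Theorem \ref{Thm 13} to replace $\mu|\nabla^k f_t|^2$ by $\|f_t-\mu f\|_2^2 + \|L^{k/2}f_t\|_2^2$; (b) exploit the spectral gap furnished by the Poincar\'e hypothesis \eqref{DecEq_Ineq.2} to get exponential decay of both of these terms via the spectral theorem applied to $P_t = e^{-tL}$; (c) reverse the norm equivalence at time $t=0$ to express the initial data in terms of the right-hand side of \eqref{DecEq_Ineq.3}.

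For (a), since both $\nabla^k$ and $L$ annihilate the constant $\mu f$, one has
\[
\mu|\nabla^k f_t|^2 = \mu|\nabla^k(f_t-\mu f)|^2 \leq C\bigl(\|f_t-\mu f\|_2^2 + \|L^{k/2} f_t\|_2^2\bigr)
\]
by Theorem \ref{Thm 13} applied to $f_t - \mu f \in W_{k,2}$. For (b), since $L$ is non-negative self-adjoint and the Poincar\'e hypothesis places its spectrum on $\{h : \mu h = 0\}$ inside $[m_0,\infty)$, spectral calculus yields
\[
\|L^\alpha P_t h\|_2 \leq e^{-m_0 t}\|L^\alpha h\|_2 \qquad \text{for every } \alpha\geq 0 \text{ and every } h \text{ with } \mu h = 0,
\]
since $\lambda^{2\alpha} e^{-2t\lambda} \leq \lambda^{2\alpha} e^{-2tm_0}$ on $[m_0,\infty)$. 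Applied with $\alpha = 0$ and $\alpha = k/2$ to $h = f - \mu f$, this contributes the $e^{-2m_0 t}$ factor to both summands. For (c), the converse direction of Theorem \ref{Thm 13} (again applied to $f - \mu f$) bounds $\|L^{k/2}f\|_2 = \|L^{k/2}(f-\mu f)\|_2$ by a constant multiple of $\|f-\mu f\|_2 + \|\nabla^k f\|_2$, establishing \eqref{DecEq_Ineq.3}.

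For \eqref{DecEq_Ineq.4}, apply \eqref{DecEq_Ineq.3} to $g \equiv f - M_{k,2}(f)$. Since $M_{k,2}(f)\in\mathcal{P}_{k-1}$, one has $\nabla^k g = \nabla^k f$, and by the higher order Poincar\'e inequality of Theorem \ref{Thm.IP{k,q}},
\[
\mu(g - \mu g)^2 \leq \mu g^2 = \|f - M_{k,2}(f)\|_2^2 \leq c_{k,2}\,\mu|\nabla^k f|^2.
\]
Substituting into \eqref{DecEq_Ineq.3} applied to $g$ gives \eqref{DecEq_Ineq.4} with $C'' = C'(1 + c_{k,2})$.

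The main point requiring care is the interpretation of the notation $(f - M_{k,2})_t$: it denotes $P_t(f - M_{k,2}(f))$, not $P_t f - M_{k,2}(P_t f)$, since the semigroup does not in general preserve polynomials. Consequently the cancellation $\nabla^k M_{k,2}(f) = 0$ is used only at the level of the initial data inside \eqref{DecEq_Ineq.3}, and no claim is made about $\nabla^k P_t(M_{k,2}(f))$ itself. A lesser technical point is that $L^{k/2}$ must be defined via the spectral theorem when $k$ is odd, but this is painless because $L$ is non-negative self-adjoint and Theorem \ref{Thm 13} is stated uniformly in the integer $k$ with no parity restriction.
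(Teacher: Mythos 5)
Your proof is correct and follows the same overall strategy as the paper: use the norm equivalence of Theorem \ref{Thm 13} to pass back and forth between $\mu|\nabla^k \cdot|^2$ and the $L$-Sobolev norm, exploit the Poincar\'e hypothesis to get exponential decay of the latter along the semigroup, and then deduce \eqref{DecEq_Ineq.4} from \eqref{DecEq_Ineq.3} via the higher order Poincar\'e inequality. One genuine (if minor) improvement in your version is worth flagging: the paper obtains the decay by differentiating $\mu(L^n f_t)^2$ and applying Gronwall, which works only for integer $n$, so for odd $k$ the quantity $\|L^{k/2}f_t\|_2$ is not directly covered by that computation; your spectral-calculus bound $\|L^\alpha P_t h\|_2\leq e^{-m_0 t}\|L^\alpha h\|_2$ for arbitrary real $\alpha\geq 0$ handles all $k$ uniformly. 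You also correctly read the paper's (somewhat compressed) chain of inequalities, where the final occurrence of $f_t$ should be $f$ and the subscript $L,\tfrac{k}{2},2$ should be $L,k,2$ so that the norm involves $L^{k/2}$; your step (c) supplies exactly the conversion back to $\mu(\nabla^k f)^2+\mu(f-\mu f)^2$ that the paper's display elides. Your reading of $(f-M_{k,2})_t$ as $P_t\bigl(f-M_{k,2}(f)\bigr)$ is the natural one and makes \eqref{DecEq_Ineq.4} a clean corollary of \eqref{DecEq_Ineq.3}; the derivation with $C''=C'(1+c_{k,2})$ is correct.
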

\hfill$\circ$
\vspace{0.50cm}  


  Motivated by the O-U case where it is possible to obtain explicit control for multiparticle case,
in the rest of this section we discuss some heuristic arguments which suggest that in a class of models a stronger decay can be expected. 
First of all we have the following relation.
\[\begin{split}
\partial_s  P_{t-s}\left|\nabla^{k} f_{s}\right|^{2}=
 P_{t-s}\left(-L \left|\nabla^{k} f_{s}\right|^{2} + 2\nabla^{k} f_{s}\cdot \nabla^{k} L f_{s}\right) \\
=P_{t-s}\left(\left(-L \left|\nabla^{k} f_{s}\right|^{2} + 2\nabla^{k} f_{s}\cdot L \nabla^{k}   f_{s} \right)
 + 2\nabla^{k} f_{s}\cdot [\nabla^{k}, L] f_{s} \right)
\end{split}
\]
Now recall that, since  $L$ is a Markov generator, we have
\[-L \left|\nabla^{k} f_{s}\right|^{2} + 2\nabla^{k} f_{s}\cdot L \nabla^{k}   f_{s} = -2 \left|\nabla^{k+1} f_{s}\right|^{2} 
\]
Next as a corollary of Lemma \ref{CommutatorLemma}, we get
 
 \begin{equation} \label{Ethm.e4a}
 [\nabla^{k}_{j_1,..,j_{k}} ,\nabla_l U] = \sum_{r=1}^{k}  \sum_{\mathbf{j}_r, \mathbf{i}_{k-r}} \left(\nabla^{r}_{\mathbf{j}_r}  \nabla_l U \right) \; \nabla^{k-r}_{\mathbf{i}_{k-r}}
\end{equation}
with corresponding partitions
$\mathbf{j}_r\equiv \{j_{n_1},..,j_{n_r}\}$ and $\mathbf{i}_{k-r} \{j_{n_{r+1}},..,j_{n_{k}}\}$ of $j_1,..,j_{k}$,
of cardinality $r$ and $k-r$, respectively, and hence
 \begin{equation} \label{Ethm.e4a}
 [\nabla^{k}_{j_1,..,j_{k}} ,L] = -\sum_{r=1}^{k}  \sum_{\mathbf{j}_r, \mathbf{i}_{k-r}} \left(\nabla^{r}_{\mathbf{j}_r}  \nabla_l U \right) \; \nabla^{k-r}_{\mathbf{i}_{k-r}}\nabla_l
\end{equation}

From this we obtain

\[\begin{split}
\partial_s  P_{t-s}\left|\nabla^{k} f_{s}\right|^{2}=
 P_{t-s}\left( -2 \left|\nabla^{k+1} f_{s}\right|^{2} 
 - 2k \nabla_j \nabla^{k-1}f_{s}\cdot (\nabla_j\nabla_i U)\nabla_i \nabla^{k-1}f_{s} \right.\\
\left.
- 2\nabla^{k} f_{s}\cdot  \sum_{r=2}^{k}  \sum_{\mathbf{j}_r, \mathbf{i}_{k-r}} \left(\nabla^{r}_{\mathbf{j}_r}  \nabla_l U \right) \; \nabla^{k-r}_{\mathbf{i}_{k-r}}\nabla_l f_{s}\right).
\end{split}
\]
Taking expectations and applying quadratic inequalities, after some rearrangement we get
\begin{equation} \label{DecEq_Ineq.1}
\begin{split}
\partial_s  \mu\left|\nabla^{k} f_{s}\right|^{2} +   2k \mu \left(\nabla_j \nabla^{k-1}f_{s}\cdot (\nabla_j\nabla_i U)\nabla_i \nabla^{k-1}f_{s} \right) -2k\varepsilon \mu\left|\nabla^{k} f_{s}\right|^2 
 \\
 \leq
%
 \frac1{2k\varepsilon}  \sum_{r=2}^{k}  \sum_{\mathbf{j}_r, \mathbf{i}_{k-r}} \mu\left( \left|\nabla^{k-r}_{\mathbf{i}_{k-r}}\nabla_l f_{s}\right|^2\; \left|\nabla^{r}_{\mathbf{j}_r}  \nabla_l U \right|^2 \right).
\end{split}
\end{equation}
Hence we see that first necessary condition for a faster decay is the following: \textit{There exists $m \in(0,\infty)$ such that}
\begin{equation} \label{DecEq_Ineq.2}
m  \mu \left| \nabla^k f\right|^2\leq 
\mu \left(\nabla_j \nabla^{k-1}f_{s}\cdot (\nabla_j\nabla_i U)\nabla_i \nabla^{k-1}f_{s} \right). \tag{C}
\end{equation}
Naturally one can satisfy this condition for strictly convex $U$.
To complete the estimates on the decay to equilibrium one needs to estimate the right hand side of (\ref{DecEq_Ineq.1}) which could be achieved with the techniques developed by us in the earlier sections possibly with additional restriction on $U$. 
This will be studied in more detail elsewhere.


\bigskip

\textit{Data Statement : The research does not include any data.}
\bigskip

\label{TheEND*}
\end{document}